\documentclass[10pt]{amsart}
\usepackage[T1]{fontenc}
\usepackage[english]{babel}
\usepackage{amssymb}
\usepackage{mathtools}
\usepackage{amsmath, empheq}
\usepackage[latin1]{inputenc} 
\usepackage[margin=1.4in]{geometry}
\usepackage{amsthm}
\usepackage{cite}
\usepackage{enumerate}
\usepackage{verbatim}
\usepackage[colorlinks=true,linkcolor = black,
  citecolor  = black , urlcolor=black, pdfborder={0 0 0}]{hyperref}
\linespread{1.0}

\newcommand{\partn}[1]{{\smallskip \noindent \textbf{#1.}}}

\DeclareMathOperator{\ord}{ord}

\DeclareMathOperator{\wideg}{wideg}

\DeclareMathOperator{\Z}{\mathbb{Z}}
\DeclareMathOperator{\F}{\mathbb{F}}

\DeclareMathOperator{\Q}{\mathbb{Q}}
\DeclareMathOperator{\Oc}{\mathcal{O}}

\numberwithin{equation}{section}

\begin{document}

\renewcommand{\thefootnote}{\roman{footnote}} 
\newtheorem{mydef}{Definition} 
\newtheorem{thm}{Theorem} 
\newtheorem{MYthm}{Theorem}
\renewcommand*{\theMYthm}{\Alph{MYthm}}
\newtheorem{example}{Example} 
\newtheorem{conj}{Conjecture}
\newtheorem{prop}{Proposition}
\newtheorem{corr}{Corollary} 
\newtheorem{MYcorr}{Corollary}
\renewcommand*{\theMYcorr}{\Alph{MYcorr}}
\newtheorem{lemma}{Lemma} 
\newtheorem{rem}{Remark}
\newtheorem*{prob}{Problem}
\newtheoremstyle{named}{}{}{\itshape}{}{\bfseries}{.}{.5em}{\thmnote{#3}}
\theoremstyle{named}
\newtheorem*{namedlemma}{Lemma}

\pagenumbering{roman}

\title{Geometric location of periodic points of 2-ramified power series}
\author{Karl-Olof Lindahl and Jonas Nordqvist}

\begin{abstract}
In this paper we study the geometric location of periodic points of power series defined over fields of positive characteristic $p$.  We find a lower bound for the norm of all nonzero periodic points in the open unit disk of 2-ramified power series. We prove that this bound is optimal for a large class of power series. Our main technical result is a computation of  the first significant terms of $p$-power iterates of 2-ramified power series.  As a by-product we obtain a self-contained proof of the characterization of 2-ramified power series.


\vspace{1ex}
\noindent
{\bf Keywords:} Non-Archimedean dynamical system, difference equation, periodic point, ramification number, arithmetic dynamics

\vspace{1ex}
\noindent
{\bf Mathematics Subject Classification:}   37P05, 39A05, 11S15, 11S82
\end{abstract}

\maketitle

\setcounter{page}{1}
\pagenumbering{arabic}

\section{Introduction}

The study of periodic points is a central issue in the theory of dynamical systems.
In this article we are interested in the geometric location of periodic points of power series defined over fields of positive characteristic. 
Dynamics over fields of positive characteristic is an important topic in arithmetic dynamics \cite{Silverman2007,AnashinKhrennikov2009}. 
Lindahl and Rivera-Letelier \cite{Lindahl2013,LindahlRiveraLetelier2015, LindahlRiveraLetelier2013}  showed that there is a connection between the geometric location of periodic points of power series with integer coefficients, and lower ramification numbers of wildly ramified field automorphisms. 
We utilize this connection to obtain an optimal lower bound for norms of periodic points of power series having a certain sequence of lower ramification numbers.

Throughout let $p$ be a prime and  $k$ be a field of characteristic $p$. Denote by $\ord(\cdot)$ the valuation on $k[[\zeta]]$ defined for a nonzero power series as the lowest degree of its nonzero terms, and put $\ord(0) = +\infty.$ Let $g \in k[[\zeta]]$ be a power series satisfying $g(0)=0$ and $g'(0) = 1$. Then for all integers $n\geq0$ we define the corresponding \emph{lower ramification number} of $g$ as 
\[i_n(g) := \ord\left(\frac{g^{p^n}(\zeta)-\zeta}{\zeta}\right).\] 
A famous theorem by Sen \cite{Sen1969,Lubin1995,LindahlRiveraLetelier2015} states that for an integer $n\geq1$ if $i_n(g) < + \infty$, then $i_n(g) \equiv i_{n-1}(g) \pmod{p^n}$. Thus given $i_0(g) \geq 1$ we have \begin{equation}\label{minramif}i_n(g) \geq 1 + p + \dots + p^n,\end{equation} for all $n\geq1$. If (\ref{minramif}) holds with equality we say that $g$ is \emph{minimally ramified}. Moreover, let $b\geq1$ be an integer, and suppose that for all integers $n\geq0$  we have \begin{equation}\label{def2ramif}i_n(g) = b(1+p+\dots +p^n).\end{equation} Then we say that $g$ is \emph{$b$-ramified}. 

Of particular interest in this paper are periodic points of power series that are $2$-ramified. 
Recall that the minimal period of each periodic point $\zeta_0 \neq0$ of $g$ in the open unit disk of $k$ is of the form $p^n$, for some integer 
$n\geq0$, see for example \cite[Lemma 2.1]{LindahlRiveraLetelier2013}. 








\subsection{Main results} The main result of this paper is the lower bound for the norm of periodic points
of 2-ramified power series given in Theorem \ref{perpoints} below.
 We also give sufficient conditions for optimality in this lower bound as discussed in \textsection\ref{optimal}.

\begin{MYthm}\label{perpoints}
Let $p\geq5$ be a prime and let $k:=(k,|\cdot|)$ be an ultrametric field of characteristic $p$. Let $f$ be a power series with coefficients in the closed unit disk of $k$, of the form \begin{equation*}\label{f}f(\zeta) = \zeta\left(1 + \sum_{j=2}^{+\infty}a_j\zeta^j\right).\end{equation*} Put $\lambda := a_2^{p-3}(3/2a_2^3 + a_3^2-a_2a_4)$. Let 
$\zeta_0$ be a periodic point of $f$ in the open unit disk of $k$. 
Then 
\[|\zeta_0| \geq |\lambda|^{\frac{1}{p}},\]
provided that $\zeta_0$ is not a fixed point. If $\zeta_0\neq 0$ is a fixed point of $f$ we have $|\zeta_0|\geq |a_2|$.
\end{MYthm}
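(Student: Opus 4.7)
The plan is to split according to whether $\zeta_0$ is a fixed point of $f$.

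\emph{Fixed points.} If $f(\zeta_0)=\zeta_0$ with $\zeta_0\neq 0$, dividing the fixed-point equation by $\zeta_0^2$ gives
\[a_2+a_3\zeta_0+a_4\zeta_0^2+\dots=0.\]
Since $|a_j|\leq 1$ and $|\zeta_0|<1$, one has $|a_j\zeta_0^{j-2}|\leq |\zeta_0|$ for every $j\geq 3$. If $|\zeta_0|<|a_2|$, every term after the first is strictly smaller in norm than $|a_2|$, contradicting the ultrametric triangle inequality; hence $|\zeta_0|\geq |a_2|$.

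\emph{Non-fixed periodic points.} Let $p^n$, with $n\geq 1$, be the minimal period of $\zeta_0$. Since $f$ is 2-ramified, (\ref{def2ramif}) gives $\ord(f^{p^m}(\zeta)-\zeta)=2(1+p+\dots+p^m)+1$ for every $m\geq 0$. The strategy is to invoke the paper's main technical computation of the first significant terms of $f^{p^m}(\zeta)-\zeta$, at both $m=n$ and $m=n-1$, and to extract from them the formal quotient
\[\Phi_n(\zeta):=\frac{f^{p^n}(\zeta)-\zeta}{f^{p^{n-1}}(\zeta)-\zeta}\in k[[\zeta]],\]
which has order exactly $2p^n$. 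The target expansion is
\[\Phi_n(\zeta)=\zeta^{2p^n}\bigl(\alpha_n+\beta_n\zeta^{p^n}+\text{higher-order}\bigr),\]
with $|\alpha_n|=|\lambda|^{p^{n-1}}$ and with $\beta_n$ a unit of the closed unit disk (the coefficients at degrees $1,\dots,p^n-1$ in the parenthesis being of strictly smaller norm, a manifestation of Sen's congruence and of the vanishing of interior binomials modulo $p$).

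Granting this, $\zeta_0$ is a root of $\Phi_n$ because the numerator vanishes at $\zeta_0$ while the denominator, by minimality of $n$, does not. Dividing $\Phi_n(\zeta_0)=0$ by $\zeta_0^{2p^n}$ produces the equation $\alpha_n+\beta_n\zeta_0^{p^n}+\dots=0$, to which the ultrametric triangle inequality gives $|\alpha_n|\leq |\beta_n||\zeta_0|^{p^n}=|\zeta_0|^{p^n}$; hence $|\zeta_0|^{p^n}\geq |\lambda|^{p^{n-1}}$, i.e.\ $|\zeta_0|\geq |\lambda|^{1/p}$.

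\emph{Main obstacle.} The real work lies in the iterate computation. One must propagate $a_2$, $a_3$, $a_4$ through $p^n$ nested compositions in characteristic $p$ with enough precision to isolate $\lambda$ in the leading coefficient of $\Phi_n$ and to confirm that the first correction (which, guided by Sen's congruence, occurs $p^n$ degrees higher) has unit coefficient. Once the iterate expansion is in hand, the Newton-polygon extraction above is immediate.
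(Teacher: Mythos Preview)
Your fixed-point argument is correct and is essentially part~(1) of Lemma~\ref{lem24}.

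For non-fixed periodic points there is a genuine gap. You assert that in $\Phi_n(\zeta)/\zeta^{2p^n}=\alpha_n+\cdots+\beta_n\zeta^{p^n}+\cdots$ the coefficient $\beta_n$ is a unit and the intermediate coefficients are negligible. But the iterate computation you invoke (Theorem~\ref{pncoeff}) determines only the first three coefficients of each $f^{p^m}(\zeta)-\zeta$, hence of $\Phi_n/\zeta^{2p^n}$ only through degree~$2$; it gives no information at degree~$p^n$. In fact your claim that $\Phi_n$ has its first unit coefficient at degree $3p^n$ is precisely the condition $\wideg(\Phi_n)=i_n(f)-i_{n-1}(f)+p^n$ of~(\ref{widegzeta}), which in Lemma~\ref{lem24} is the \emph{equality} criterion, not a consequence of the hypotheses of Theorem~\ref{perpoints}. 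The second case of Example~\ref{appexample} exhibits a $2$-ramified $f$ with $\wideg(\Phi_n)=5p^n$, so your $\beta_n$ is not a unit there. Note also that $\Phi_n$ need not even lie in $\mathcal{O}_k[[\zeta]]$, since forming the quotient divides by $\mu_{n-1}(f)$.

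The paper instead cites Lemma~\ref{lem24} from~\cite{LindahlRiveraLetelier2015}, which yields $|\zeta_0|\geq|\mu_n(f)/\mu_{n-1}(f)|^{1/p^n}$ with no hypothesis on $\widetilde f$. The mechanism there is not a single dominant coefficient of $\Phi_n$ but the fact that the whole $p^n$-cycle of $\zeta_0$ furnishes $p^n$ roots of equal norm; a Newton-polygon count against the integral series $f^{p^n}(\zeta)-\zeta$ then forces the exponent $1/p^n$. With that lemma in hand, Theorem~\ref{pncoeff} supplies $\mu_m(f)=a_2^{p^m-2d(m)}\varphi^{d(m)}$, the ratio collapses to $\lambda^{p^{n-1}}$, and the bound follows. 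Finally, you should dispose of the case $\lambda=0$ (trivial) before assuming $f$ is $2$-ramified.
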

\begin{rem}
We note that $\lambda \neq 0$ is equivalent to $f$ being 2-ramified.
\end{rem}
Theorem \ref{perpoints} is a consequence of Theorem \ref{pncoeff} and \cite[Lemma 2.4]{LindahlRiveraLetelier2015}. 
\begin{MYthm}\label{pncoeff}
Let $p$ be an odd prime and let $k$ be a field of characteristic $p$. Let $f \in k[[\zeta]]$ be defined as \[f(\zeta)  = \zeta\left(1 + \sum_{j=2}^{+\infty}a_j\zeta^j\right).\] Furthermore, let $n\geq1$ be an integer, and let \[d(n)=d := \frac{p^n-1}{p-1},\quad \varphi:=3/2a_2^3 + a_3^2-a_2a_4.\]
Let $\alpha_n, \beta_n$ and $\gamma_n$ be defined as follows 
\begin{equation*}\label{alphabetagmma2}
\alpha_n = a_2^{p^n-2d}\varphi^d, \quad
\beta_n = \frac{a_3}{a_2}\alpha_n, \quad
\gamma_n = -\left(\frac{3a_2}{2}-\frac{a_4}{a_2}\right)\alpha_n.
\end{equation*} Then
\[f^{p^n}(\zeta)-\zeta\equiv \alpha_n\zeta^{2d+1} + \beta_n\zeta^{2d+2} + \gamma_n\zeta^{2d+3} \mod \langle \zeta^{2d+4} \rangle.\]
\end{MYthm}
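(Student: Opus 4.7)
The natural route is induction on $n$. For the base case $n=1$, I would compute $f^{p}(\zeta)\pmod{\zeta^{2d+4}}$ directly. Writing $f^{k}(\zeta) = \zeta + \sum_{j\geq 3} c_{k,j}\zeta^{j}$, the identity $f^{k+1} = f\circ f^{k}$ yields additive recursions
\[
c_{k+1,j} \;=\; c_{k,j} + R_{j}\bigl(c_{k,3},\ldots,c_{k,j-2};\;a_{2}, a_{3},\ldots\bigr),
\]
where $R_{j}$ comes from extracting the $\zeta^{j}$-coefficient in $\sum_{\ell\geq 2} a_{\ell}\bigl(f^{k}(\zeta)\bigr)^{\ell+1}$. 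Solving these recursions from $k=1$ upward expresses each $c_{k,j}$ as an explicit polynomial in $(a_{2},a_{3},\ldots)$ whose coefficients are polynomials in $k$. Setting $k=p$ and using $\binom{p}{r}\equiv 0\pmod p$ for $1\le r\le p-1$ together with the standard evaluations of power sums $\sum_{k=0}^{p-1} k^{j}\pmod p$, one checks that $c_{p,j}$ vanishes for $j$ below the first significant index and matches $\alpha_{1},\beta_{1},\gamma_{1}$ at the three relevant degrees.

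For the inductive step, set $g := f^{p^{n-1}}$ and use the inductive hypothesis to read off the first significant terms of $g-\zeta$. Since $f^{p^{n}} = g^{p}$, the same recursion-and-specialization procedure, now applied to $g$, computes the first surviving triple of coefficients of $g^{p}(\zeta)-\zeta$ as an explicit polynomial expression in $(\alpha_{n-1},\beta_{n-1},\gamma_{n-1})$ and the lower-order coefficients of $g$. The formulas in the statement are arranged so that the multiplicative identity $\alpha_{n} = \alpha_{n-1}^{p}\,\varphi/a_{2}^{2}$ (a consequence of $d(n) = p\,d(n-1)+1$) together with the proportionalities $\beta_{n}/\alpha_{n} = a_{3}/a_{2}$ and $\gamma_{n}/\alpha_{n} = -(3a_{2}/2 - a_{4}/a_{2})$ propagate automatically from one step to the next, matching the claimed output.

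The main obstacle will be the combinatorial bookkeeping. Although the final output involves only three coefficients, the recursion forces one to carry along all intermediate $c_{k,j}$ up to degree $2d+3$, and the mod-$p$ simplifications kick in only after every contribution has been gathered. To control this, I would isolate a \emph{composition lemma} to the effect that if $h(\zeta) - \zeta \equiv A\zeta^{m+1} + B\zeta^{m+2} + C\zeta^{m+3}\pmod{\zeta^{m+4}}$ for an $h$ of the appropriate normalized shape, then the first three significant coefficients of $h^{p}(\zeta)-\zeta$ are given by an explicit polynomial recipe in $(A,B,C)$ and a few lower coefficients of $h$. Both the base case ($h=f$) and the inductive step ($h=f^{p^{n-1}}$) then reduce to a single application of this lemma, and as a by-product the characterization of $2$-ramified series promised in the abstract falls out from the vanishing condition $\varphi=0$ on the output coefficients.
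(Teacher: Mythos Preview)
Your inductive skeleton matches the paper's: base case $n=1$ via a direct computation of $f^p$, inductive step via $g=f^{p^{n-1}}$ and $f^{p^n}=g^p$, with the identity $\alpha_n=\alpha_{n-1}^{p}\,\varphi/a_2^{2}$ carrying the formulas forward. The substantive difference is in how the $p$-fold iterate is computed. You propose tracking the coefficients $c_{k,j}$ of $f^k$ through $f^{k+1}=f\circ f^k$; since the first nonvanishing coefficient of $f^p-\zeta$ sits at degree $2p+3$, this forces you to carry roughly $2p$ coefficients simultaneously, each a polynomial in $k$ of degree up to about $p$, before collapsing everything via the power sums $\sum_{k=1}^{p-1}k^m\pmod p$. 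The paper instead works with the iterated difference $\Delta_m$, defined by $\Delta_1=f-\zeta$ and $\Delta_{m+1}=\Delta_m\circ f-\Delta_m$, so that $\Delta_p=f^p-\zeta$ in characteristic $p$. The point of this device is that $\ord(\Delta_m)\ge 2m+1$, so only five coefficients of $\Delta_m$ need to be tracked, and these satisfy explicit first-order linear difference equations in $m$ that are solved in closed form and then reduced modulo $p$. This is the organizing idea that tames the bookkeeping you anticipate.

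One further remark on your proposed single ``composition lemma'': it will not be uniform in the two applications. When $h=f$ the perturbation $h-\zeta$ has order $3$, so in the binomial expansions quadratic and cubic powers of $\Delta_1$ contribute (whence the $x_1^2$, $x_1^3$, $x_2^2$ terms in the paper's five-term recursion); when $h=f^{p^{n-1}}$ the order of $h-\zeta$ is large and only the linear contributions survive, giving a strictly simpler recursion. The paper accordingly proves the base case (its Main Lemma) separately from the inductive step; a single lemma covering both would have to carry the full complexity of the base case.
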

The main work of the proof is the Main Lemma given in \textsection\ref{secproof} where we compute the first significant terms of $f$ at its $p$th iterate by solving systems of difference equations in characteristic $p$. This result is an extension of Proposition 1 in \cite{Fransson2017}. We also note that due to Theorem \ref{pncoeff} we have a self-contained proof of Theorem 1 in \cite{Fransson2017}.

Below we discuss sufficient conditions for optimality of the bound in Theorem \ref{perpoints} in terms of the reduction of $f$.

\subsubsection{Optimality condition}\label{optimal}

Throughout the paper let $(k, |\cdot|)$ be an ultrametric field, $\mathcal{O}_k$ denote the ring of integers of $k$, and $\mathfrak{m}_k$ its maximal ideal. Geometrically, $\mathfrak{m}_k$ is the open unit disk in $k$. Let $\widetilde{k}:=\mathcal{O}_k/\mathfrak{m}_k$ be the residue field of $k$. Denote the projection in $\widetilde{k}$ of an element $a$ of $\mathcal{O}_k$ by $\widetilde{a}$; it is the reduction of $a$. The reduction of a power series $g \in \mathcal{O}_k[[\zeta]]$ is the power series $\widetilde{g}(\zeta) \in \widetilde{k}[[\zeta]]$ whose coefficients are the reductions of the corresponding coefficients of $g$.

The following result gives sufficient conditions for optimality of the bound in Theorem \ref{perpoints}.
\begin{MYcorr}\label{corrperpoints}
Let $p\geq5$ be a prime and let $k:=(k,|\cdot|)$ be an ultrametric field of characteristic $p$. Let $f \in \mathcal{O}_k[[\zeta]]$ be of the form 
\[f(\zeta)  = \zeta\left(1 + \sum_{j=2}^{+\infty}a_j\zeta^j\right).\] 
Put $\lambda := a_2^{p-3}(3/2a_2^3 + a_3^2-a_2a_4)$. 
Furthermore let $\lambda\neq0$, and $\widetilde{f}(\zeta)$ be 3-ramified. Then 
all periodic points of $f$ 
in the open unit disk of $k$ that are not fixed points, are on the sphere 
\[\left\{\zeta \in k: |\zeta| = |\lambda|^{\frac{1}{p}}\right\}.\]
\end{MYcorr}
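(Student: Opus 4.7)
The plan is to combine the lower bound from Theorem~\ref{perpoints} with a Weierstrass-preparation and Vieta argument to force the matching upper bound. It is cleanest to work over $K$, the completion of an algebraic closure of $k$; Theorem~\ref{perpoints} applies verbatim to $K$, and every periodic point of $f$ in $\mathfrak m_k$ lies in $\mathfrak m_K$, so it suffices to show that every non-fixed periodic point in $\mathfrak m_K$ of period $p^n$ (for each $n\ge 1$) sits on the sphere $|\zeta|=|\lambda|^{1/p}$.

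From $\widetilde a_2=0$ and $\widetilde a_3\ne 0$ one reads $|a_2|<1$ and $|a_3|=1$; the ultrametric law then gives $|\varphi|=|a_3^2|=1$ and $|\lambda|=|a_2|^{p-3}$. Since $\lambda\ne 0$, $f$ is 2-ramified, so $i_n(f)=2D_n$ with $D_n=1+p+\cdots+p^n=pd+1$, $d=(p^n-1)/(p-1)$, and Theorem~\ref{pncoeff} yields
\[ f^{p^n}(\zeta)-\zeta=\zeta^{2D_n+1}h(\zeta),\qquad |h(0)|=|\alpha_n|=|a_2|^{p^n-2d}=|a_2|\cdot|\lambda|^d. \]
Because $\widetilde f$ is 3-ramified, $\widetilde f^{p^n}(\zeta)-\zeta$ has order $3D_n+1$; equivalently $\widetilde h$ has order exactly $D_n$, and Weierstrass preparation writes $h=uP$ with $u\in\mathcal O_K[[\zeta]]^\times$ and $P$ a distinguished polynomial of degree $D_n$. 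Its $D_n$ roots in $\mathfrak m_K$, counted with multiplicity, are exactly the nonzero periodic points of $f$ of period dividing $p^n$.

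I isolate the unique nonzero fixed point $\zeta_*$ of $f$ in $\mathfrak m_K$: the Newton polygon of $(f(\zeta)-\zeta)/\zeta^3=a_2+a_3\zeta+\cdots$ has a single negative-slope segment, producing one such root with $|\zeta_*|=|a_2|$. The characteristic-$p$ identity $(1+\varepsilon)^{p^n}=1+\varepsilon^{p^n}$ applied to $\varepsilon:=f'(\zeta_*)-1$ (nonzero of norm $|a_2|^3$ since $p\ne 3$), together with $(f^{p^n})'(\zeta_*)=f'(\zeta_*)^{p^n}$, yields $(f^{p^n})'(\zeta_*)-1=\varepsilon^{p^n}\ne 0$, so $\zeta_*$ is a simple root of $f^{p^n}-\zeta$ and contributes multiplicity one to $P$. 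Letting $\zeta_1,\dots,\zeta_{pd}$ denote the remaining $D_n-1=pd$ roots of $P$, Vieta gives
\[ \prod_{i=1}^{pd}|\zeta_i|=\frac{|P(0)|}{|\zeta_*|}=\frac{|\alpha_n|}{|a_2|}=|\lambda|^d, \]
and Theorem~\ref{perpoints} gives $|\zeta_i|\ge|\lambda|^{1/p}$ for each $i$. Combining,
\[ |\lambda|^d=\prod_{i=1}^{pd}|\zeta_i|\ge\bigl(|\lambda|^{1/p}\bigr)^{pd}=|\lambda|^d, \]
with equality throughout, forcing $|\zeta_i|=|\lambda|^{1/p}$ for every $i$. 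The main obstacle is the multiplicity-one verification for $\zeta_*$: without it the Vieta product alone cannot pin down each individual $|\zeta_i|$.
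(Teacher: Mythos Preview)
Your argument is correct, but it takes a genuinely different route from the paper's proof. The paper simply verifies the equality criterion in Lemma~\ref{lem24}(2): since $\widetilde f$ is $3$-ramified, $\wideg(f^{p^n}(\zeta)-\zeta)=3D_n+1$, hence
\[
\wideg\!\left(\frac{f^{p^n}(\zeta)-\zeta}{f^{p^{n-1}}(\zeta)-\zeta}\right)=3D_n-3D_{n-1}=3p^n=i_n(f)-i_{n-1}(f)+p^n,
\]
and the lemma then forces equality in the bound. In other words, the paper factors $f^{p^{n-1}}(\zeta)-\zeta$ out of $f^{p^n}(\zeta)-\zeta$ and invokes a black-box Weierstrass/Newton-polygon result from \cite{LindahlRiveraLetelier2015}. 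You instead apply Weierstrass preparation to all of $f^{p^n}(\zeta)-\zeta$, peel off only the unique nonzero fixed point $\zeta_*$, and combine Vieta with the already-established lower bound of Theorem~\ref{perpoints} to squeeze every remaining root onto the sphere. This is more self-contained (you do not need the equality half of Lemma~\ref{lem24}) and treats all periods $p,\dots,p^n$ simultaneously; the paper's approach is shorter but relies on the external lemma.

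One small point worth tightening: your assertion that $\varepsilon=f'(\zeta_*)-1$ has norm $|a_2|^3$ is correct but not immediate, since the two leading contributions $3a_2\zeta_*^2$ and $4a_3\zeta_*^3$ both have norm $|a_2|^3$ and could in principle cancel. The clean way is to write $f(\zeta)-\zeta=\zeta^3 g(\zeta)$ with $g(\zeta_*)=0$; then $f'(\zeta_*)-1=\zeta_*^3\,g'(\zeta_*)$, and $g'(\zeta_*)=a_3+2a_4\zeta_*+\cdots$ has norm $|a_3|=1$, giving $|\varepsilon|=|\zeta_*|^3=|a_2|^3\neq 0$ as you need.
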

\begin{rem}\label{appremark}
By \cite[Corollary 1]{LaubieSaine1998} $\widetilde{f}(\zeta)$ is 3-ramified if $i_0(\widetilde{f}) =3$ and $i_1(\widetilde{f})=3+3p$. 
In particular, if $f\in k[[\zeta]]$ is of the form \[f(\zeta) \equiv \zeta(1 + a_2\zeta^2 + a_3\zeta^3) \mod \langle \zeta^{10} \rangle, \text{ $0<|a_2|<1$ and $|a_3|=1$.}\] Then $\lambda\neq0$ and $\widetilde{f}(\zeta)$ is 3-ramified. See Appendix \ref{apprem} for details. 
\end{rem}
The following example illustrates that the condition in Corollary \ref{corrperpoints} that $\widetilde{f}(\zeta)$ is 3-ramified is not redundant. 
See Appendix \ref{appexp} for details.

\begin{example}\label{appexample}
Let $p=5$, $k=\F_5((t))$ and $q(\zeta) \in \mathcal{O}_k[[\zeta]]$ be a power series of the form 
\[q(\zeta) \equiv \zeta(1 + c_2\zeta^2 + c_3\zeta^3 + c_4\zeta^4) \mod \langle \zeta^6 \rangle.\] 
Put $\lambda := c_2^{p-3} (3/2c_2^3 + c_3^2-c_2c_4)$. 
If $(c_2,c_3,c_4) = (1+t,1,0)$ then all periodic points $\zeta_0$ in $\mathfrak{m}_k$ of $g$, 
of minimal period $p^n$ with $n\geq 1$, 
have norm $|\zeta_0| = |\lambda|^{\frac{1}{p}}$.    
If $(c_2,c_3,c_4) = (2+t,4,4)$, then $|\zeta_0|>|\lambda|^{\frac{1}{p}}$. In both cases $q$ is 2-ramified, but in none of the cases $\widetilde{q}$ is 3-ramified.
\end{example}


\subsection{Related works}
In \cite{LindahlRiveraLetelier2015} the authors give a corresponding result of Theorem \ref{perpoints} for minimally ramified power series, where $\lambda$ is expressed in terms of the coefficients of the first two non-linear  terms. 
Provided the information from Theorem \ref{pncoeff} we can make a corresponding version of Corollary \ref{corrperpoints} for minimally ramified power series, where the conditions for optimality are expressed in terms of the four lowest degree non-linear terms.



Let $\eta \in k$ and $f$ be a power series in $k[[\zeta]]$ of the form \[f(\zeta) = \eta\zeta +\cdots.\] In this paper we study the parabolic case where $\eta = 1$. The irrationally indifferent case, where the multiplier $\eta$ is of norm one but not a root of unity, was studied in \cite{LindahlRiveraLetelier2013}. The $p$-adic case was also studied in \cite{ArrowsmithVivaldi1994,Lindahl2013,Lubin1994,Rivera-Letelier2003}. In contrast to our case where the periodic points of period greater than one are concentrated on a single sphere inside the open unit disk, 
in the latter cases the periodic points are distributed on infinitely many different spheres. 

The method used in this paper to calculate the coefficients of $p$-power iterates boils down to solving systems of difference equations over non-Archimedean fields. See for example \cite{MukhamedovAkin2015} for a recent contribution to this field of research.







\subsection{Organization of the paper}
Theorem \ref{perpoints} and Corollary \ref{corrperpoints} is proven in section \textsection\ref{proofmain} assuming Theorem \ref{pncoeff}. The main technical result of this paper is the computation of the first significant terms of $f$ at its $p$th iterate which is done in the Main Lemma in \textsection\ref{secproof}. This result is an extension of Proposition 1 in \cite{Fransson2017}. The setup for this proof is discussed in \textsection\ref{techres}, and we state and prove the Main Lemma in \textsection\ref{secproof} together with the proof of Theorem \ref{pncoeff}.

\section{Proof of Theorem \ref{perpoints} and Corollary \ref{corrperpoints} assuming Theorem \ref{pncoeff}}\label{proofmain}
In this section we prove Theorem \ref{perpoints} assuming Theorem \ref{pncoeff}. The proof is a direct consequence of Theorem \ref{pncoeff} and \cite[Lemma 2.4]{LindahlRiveraLetelier2015}. Before stating the special case of \cite[Lemma 2.4]{LindahlRiveraLetelier2015} utilized in the proof of Theorem \ref{perpoints} we give the following definitions.


\begin{mydef}Let $p$ be a prime number and $k$ field of characteristic $p$. For a power series $g(\zeta)$ in $k[[\zeta]]$ of the form \[g(\zeta) = \zeta + \dots,\] define for each integer $n \geq 0$ the element $\mu_n(g)$ of $k$ as follows: Put $\mu_n(g) := 0$ if $i_n(g) = +\infty$, and otherwise let $\mu_n(g)$ be the coefficient of $\zeta^{i_n(g)+1}$ in the power series $g^{p^n}(\zeta)-\zeta$.
\end{mydef}

For a power series $f(\zeta)$ in $\Oc_k[[\zeta]]$, the Weierstrass degree $\wideg(f)$ of $f$ is the order in $\widetilde{k}[[z]]$ of the reduction $\widetilde{f}(z)$ of $f(\zeta)$. Note that if $\wideg(f)$ is finite, the number of zeros of $f$ in $\mathfrak{m}_k$, counted with multiplicity, is less than or equal to $\wideg(f)$; see e.g. \cite[\textsection VI, Theorem 9.2]{Lang2002}.

\begin{lemma}[Special case of Lemma 2.4 in \cite{LindahlRiveraLetelier2015}]\label{lem24}
Let $p$ be a prime and $(k,|\cdot|)$ an ultrametric field of characteristic $p$. Moreover, let $g(\zeta)$ be a parabolic power series in $\Oc_k[[\zeta]]$. Then the following properties hold.
\begin{enumerate}
\item Let $w_0 \neq 0$ in $\mathfrak{m}_k$ be a fixed point of $g$. Then we have \begin{equation}\label{w0} |w_0|\geq |\mu_0(g)|,\end{equation} with equality if and only if \begin{equation}\label{widegw0} \wideg(g(\zeta)-\zeta)= i_0(g)+2.\end{equation} 
\item Let $n\geq1$ be an integer and $\zeta_0$ in $\mathfrak{m}_k$ a periodic point of $g$ of minimal period $p^n$. If in addition $i_n(g)<+\infty$, then we have \begin{equation}\label{zeta0} |\zeta_0|\geq \left|\frac{\mu_n(g)}{\mu_{n-1}(g)}\right|^{\frac{1}{p^n}},\end{equation} with equality if and only if \begin{equation}\label{widegzeta}\wideg\left(\frac{g^{p^n}(\zeta)-\zeta}{g^{p^{n-1}}(\zeta)-\zeta}\right) = i_n(g)-i_{n-1}(g)+p^n.\end{equation} Moreover, if (\ref{widegzeta}) holds, then the cycle containing $\zeta_0$ is the only cycle of minimal period $p^n$ of $g$ in $\mathfrak{m}_k$, and for every point $\zeta_0'$ in this cycle $|\zeta_0'|=\left|\frac{\mu_n(g)}{\mu_{n-1}(g)}\right|^{\frac{1}{p^n}}$.
\end{enumerate}
\end{lemma}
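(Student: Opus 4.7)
The plan is to attack each part of the lemma via the ultrametric inequality applied to the vanishing of a power series at the (periodic) point, with an additional Newton-polygon argument in part (2) to exploit the fact that a full orbit yields many zeros.

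For part (1), I would write $g(\zeta) - \zeta = \sum_{j \geq i_0(g)+1} c_j \zeta^j$ with $c_j \in \Oc_k$ and $c_{i_0(g)+1} = \mu_0(g)$. Substituting $\zeta = w_0$ and isolating the leading term gives
\[
\mu_0(g)\, w_0^{i_0(g)+1} = -\sum_{j > i_0(g)+1} c_j w_0^j.
\]
The ultrametric inequality, together with $|c_j| \leq 1$ and $|w_0| < 1$, bounds the right-hand side by $|w_0|^{i_0(g)+2}$, yielding (\ref{w0}). For equality, the ultrametric maximum must be attained; since $|c_j||w_0|^j < |w_0|^{i_0(g)+2}$ as soon as $j > i_0(g)+2$, the only possibility is $|c_{i_0(g)+2}| = 1$, i.e.\ $\wideg(g - \id) = i_0(g)+2$. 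Conversely, assuming $\wideg = i_0(g)+2$, Weierstrass preparation writes $g(\zeta) - \zeta$ as a monic polynomial of degree $i_0(g)+2$ times a unit; since this polynomial already has a zero of order $i_0(g)+1$ at $\zeta = 0$, what remains is a linear factor $\zeta + c$ with $c \in \mathfrak{m}_k$, and comparing the coefficient of $\zeta^{i_0(g)+1}$ shows $|c| = |\mu_0(g)|$, so the unique nonzero fixed point $w_0 = -c$ realizes the equality.

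For part (2), set $Q(\zeta) := (g^{p^n}(\zeta) - \zeta)/(g^{p^{n-1}}(\zeta) - \zeta)$. I would first verify that $Q$ is a power series by expanding $g^{p^n} - \id = \sum_{k=0}^{p-1} (g^{p^{n-1}} - \id) \circ g^{k p^{n-1}}$ and checking that each summand is divisible by $g^{p^{n-1}} - \id$. Then $\ord(Q) = i_n(g) - i_{n-1}(g)$ with leading coefficient $\kappa := \mu_n(g)/\mu_{n-1}(g)$. Since $g$ is parabolic, $|g(\zeta)| = |\zeta|$ on $\mathfrak{m}_k$, hence the $g$-orbit of $\zeta_0$ consists of $p^n$ distinct points of the same norm $|\zeta_0|$, and each is a zero of $Q$ (being $g^{p^n}$-fixed but not $g^{p^{n-1}}$-fixed). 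The Newton polygon of $Q$ therefore starts at the vertex $(i_n(g) - i_{n-1}(g), v(\kappa))$ and must contain a segment of slope $-v(\zeta_0)$ of horizontal length at least $p^n$. The total vertical drop of the strictly negative-slope part of the polygon equals $v(\kappa)$ (the polygon reaches height zero at horizontal coordinate $\wideg(Q)$), so $p^n\cdot v(\zeta_0) \leq v(\kappa)$, which is exactly (\ref{zeta0}). Equality forces the polygon to consist of a single negative-slope segment of length exactly $p^n$ followed by a horizontal segment, which is precisely (\ref{widegzeta}); in that case this single segment accounts for every zero of $Q$ in $\mathfrak{m}_k\setminus\{0\}$, so the orbit of $\zeta_0$ is the unique cycle of minimal period $p^n$ and all of its points share the common norm $|\kappa|^{1/p^n}$.

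The main obstacle is this Newton-polygon step. One must confirm that $Q$ genuinely has $\wideg(Q)$ well-defined (clearing denominators via $\mu_{n-1} Q$, or more invariantly viewing $Q$ as the ratio of Weierstrass polynomials of $g^{p^n} - \id$ and $g^{p^{n-1}} - \id$), match the negative-slope segments bijectively with zeros in $\mathfrak{m}_k$, and, for the equality direction, ensure that absence of additional negative-slope segments is equivalent to the first unit coefficient of $Q$ appearing exactly at degree $i_n(g) - i_{n-1}(g) + p^n$ rather than later. Part (1) is essentially a direct ultrametric computation, and the content lies in the bookkeeping of the Newton polygon in part (2).
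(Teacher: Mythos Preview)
The paper does not prove this lemma at all: it is quoted verbatim as a special case of Lemma~2.4 in \cite{LindahlRiveraLetelier2015} and used as a black box in the proofs of Theorem~\ref{perpoints} and Corollary~\ref{corrperpoints}. So there is no ``paper's own proof'' to compare against; what you have written is an independent proof sketch of a result the authors import.

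Your approach is sound and is in fact the standard one. Part~(1) is a clean ultrametric estimate and your treatment of both directions of the equivalence is correct. For part~(2), the Newton-polygon argument you outline is exactly the right mechanism: the $p^n$ orbit points give a segment of slope $-v(\zeta_0)$ of horizontal length at least $p^n$, the total drop is bounded by $v(\kappa)$ provided $Q\in\Oc_k[[\zeta]]$, and equality pins down the polygon completely, which identifies $\wideg(Q)$ and shows uniqueness of the cycle.

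The one point you flag as an obstacle, namely that $Q$ lies in $\Oc_k[[\zeta]]$, is in fact resolved by the very telescoping identity you write down. With $h:=g^{p^{n-1}}$ and $\Delta:=h-\id=\zeta^{i_{n-1}+1}\phi(\zeta)$, $\phi\in\Oc_k[[\zeta]]$, one has $h^k(\zeta)=\zeta(1+\psi_k(\zeta))$ with $\psi_k\in\zeta\Oc_k[[\zeta]]$, so
\[
\frac{\Delta(h^k(\zeta))}{\Delta(\zeta)}=(1+\psi_k(\zeta))^{i_{n-1}+1}\cdot\frac{\phi(h^k(\zeta))}{\phi(\zeta)}.
\]
Now $\phi(h^k(\zeta))-\phi(\zeta)$ is divisible by $h^k(\zeta)-\zeta$, which in turn is divisible by $\Delta(\zeta)=\zeta^{i_{n-1}+1}\phi(\zeta)$ in $\Oc_k[[\zeta]]$; hence $\phi(h^k)/\phi\in 1+\zeta^{i_{n-1}+1}\Oc_k[[\zeta]]$. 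Thus each summand of $Q=\sum_{k=0}^{p-1}\Delta(h^k)/\Delta$ is integral, and so is $Q$. With this in hand your Newton-polygon argument goes through verbatim, and the ``clearing denominators'' workaround is unnecessary.
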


Assuming Theorem \ref{pncoeff} we now have the results needed to prove Theorem \ref{perpoints}.


\begin{proof}[Proof of Theorem \ref{perpoints}] 
If $\lambda = 0$ the theorem holds trivially. However, if $\lambda \neq0$ then $f$ is 2-ramified by Theorem \ref{pncoeff}, and thus for integers $n\geq0$ we have $\mu_n(f) =  a_2^{p^n-2\frac{p^n-1}{p-1}}\varphi^{\frac{p^n-1}{p-1}}$. Hence, by Lemma \ref{lem24} we have for all 
$n\geq1$ and all periodic points $\zeta_0$ in $\mathfrak{m}_k$ of minimal period $p^n$
\begin{align}\label{zeta0ineq}
|\zeta_0| \geq \left|\frac{\mu_n(f)}{\mu_{n-1}(f)}\right|^{\frac{1}{p^n}} &= \left|\frac{a_2^{p^n-2\frac{p^n-1}{p-1}}\varphi^{\frac{p^n-1}{p-1}}}{a_2^{p^{n-1}-2\frac{p^{n-1}-1}{p-1}}\varphi^{\frac{p^{n-1}-1}{p-1}}}\right|^{\frac{1}{p^n}}\notag \\
&= \left|a_2^{(p-3)p^{n-1}}\varphi^{p^{n-1}}\right|^{\frac{1}{p^n}}\notag\\
&= \left|a_2^{p-3}\varphi\right|^{\frac{1}{p}} = |\lambda|^{\frac{1}{p}}.\notag
\end{align}
It follows immediately from Lemma \ref{lem24} 
that for fixed points $\zeta_0\neq 0$ of $f$ in the open unit disk of $k$ we have $|\zeta_0|\geq |a_2|$.
This completes the proof of Theorem \ref{perpoints}.
\end{proof}

\begin{proof}[Proof of Corollary \ref{corrperpoints}]
We note that $\lambda \neq 0$ implies that $f$ is 2-ramified and this in turn implies that for integers $n\geq1$ we have $i_n(f) - i_{n-1}(f) + p^n = 3p^n$. Also, if $\widetilde{f}(\zeta)$ is 3-ramified then for $n\geq1$ we have
\begin{align*}\wideg\left(\frac{f^{p^n}(\zeta)-\zeta}{f^{p^{n-1}}(\zeta)-\zeta}\right) &= \wideg(f^{p^n}(\zeta)-\zeta) - \wideg(f^{p^{n-1}}(\zeta)-\zeta)\\
&= 3\frac{p^n-1}{p-1} - 3\frac{p^{n-1}-1}{p-1} = 3p^n.\end{align*} Hence, (\ref{widegzeta}) in Lemma \ref{lem24} holds with equality. This completes the proof of Corollary \ref{corrperpoints}.
\end{proof}


\section{Technical results}\label{techres}
 In this section we present results that we use to prove our Main Lemma and Theorem \ref{pncoeff}. 

Throughout the paper for any nonnegative integer $n$ let $n!!$ denote the \emph{double factorial} of $n$. We put $0!! :=1$ and $1!! := 1$. For future reference, we note that for integers $n\geq 2$, we have \[n!! = n(n-2)!!.\]

 
 The proof of the Main Lemma relies on solving linear difference equations expressed as sums of rational functions. For convenience these sums are considered over the $p$-adic numbers $\Q_p$ and its ring of integers $\Z_p$. The main part of this section involves finding the corresponding reductions in $\F_p$. Throughout we let $\nu_p(\cdot)$ denote the $p$-adic valuation.

\begin{mydef} Let $p$ be a prime, and let $f: \Q_p \rightarrow \Q_p$. We say that $f$ has a \emph{pole} at $a$ if $f(a) \notin \Z_p$. Furthermore, if $p^{n}f(a) \in \Z_p$ but $p^{n-1}f(a) \notin \Z_p$, then we say that the pole is of order $n$. A pole of order 1 is called a simple pole. Moreover, we define the \emph{residue} of a function at a pole $a$ of order $n$ as $p^nf(a)$.
\end{mydef}
\begin{lemma}\label{pole}
Let $p$ be a prime and let $f:\Q_p\rightarrow\Q_p$ be such that $f$ only has simple poles. Put \[ F := p\sum_{j=n}^N f(j).\] Then the reduction $\widetilde{F}$ is well-defined and  \[\widetilde{F} = \sum_{\substack{n\leq a\leq N\\\text{$a$ is a pole of $f$}}}pf(a).\]
\end{lemma}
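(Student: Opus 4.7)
The plan is to split the sum by whether each index is a pole of $f$, observe that non-pole contributions vanish modulo $p$, and then use additivity of the reduction map. No heavy machinery is needed; the statement is a direct bookkeeping consequence of the definitions of \emph{pole} and \emph{simple pole} introduced just above.

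First I would partition the range of summation as
\[
F \;=\; p\!\!\sum_{\substack{n \le j \le N \\ \text{$j$ not a pole of $f$}}}\!\! f(j) \;+\; p\!\!\sum_{\substack{n \le a \le N \\ \text{$a$ is a pole of $f$}}}\!\! f(a).
\]
For an index $j$ that is not a pole, the definition gives $f(j) \in \Z_p$, so $p f(j) \in p\Z_p$; hence this term lies in $\Z_p$ and its reduction in $\F_p$ is zero. For a (necessarily simple) pole $a$, the definition of a pole of order $1$ states precisely that $p f(a) \in \Z_p$ while $f(a)\notin \Z_p$, so $p f(a)$ is a well-defined element of $\Z_p$ with a well-defined reduction $\widetilde{p f(a)} \in \F_p$.

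Next I would conclude that $F$ is a finite sum of elements of $\Z_p$, hence $F \in \Z_p$, so $\widetilde{F}$ is well-defined. Applying the ring homomorphism $\Z_p \to \F_p$, $x\mapsto \widetilde{x}$, termwise, the non-pole contributions reduce to zero and only the simple-pole contributions survive, giving exactly
\[
\widetilde{F} \;=\; \sum_{\substack{n \le a \le N \\ \text{$a$ is a pole of $f$}}} \widetilde{p f(a)},
\]
which is the claimed identity (with the tilde on $p f(a)$ suppressed in the statement since the equality is taking place in $\F_p$).

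There is no real obstacle here; the only place where one must be a bit careful is recognising that the hypothesis \emph{simple} poles is what guarantees $p f(a) \in \Z_p$ — had any pole of $f$ in the range had order $\ge 2$, the term $p f(a)$ would fail to lie in $\Z_p$ and the whole reduction $\widetilde{F}$ would be ill-defined. Since the lemma is later applied to explicit rational sums arising from $p$-power iteration, this simple-pole hypothesis is exactly what will need to be verified in those applications.
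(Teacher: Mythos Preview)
Your argument is correct and essentially identical to the paper's: split the sum into pole and non-pole terms, use $f(j)\in\Z_p$ at non-poles so $pf(j)\equiv 0\pmod{p\Z_p}$, and use simplicity of the poles to ensure $pf(a)\in\Z_p$. The paper's proof is the same, just stated more tersely.
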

\begin{proof}
Using that $f$ only has simple poles we see that the reduction of $pf(a)$ is well-defined, and the proof of the lemma follows from seeing that for all elements $b \in [n,N]$ such that $b$ is not a pole of $f$ we have $pf(b) \equiv 0 \mod {p\Z_p}$.
\end{proof}
\begin{lemma}\label{generalwilson}
Let $p$ be a prime, $a,b \in \mathbb{F}_p$, and $a \neq0$. Furthermore let $f(n) = an + b$, $s' \equiv -a^{-1}b\pmod{p}$ and $S = \mathbb{F}_p \setminus \{s'\}$. Then \[\prod_{s \in S} f(s) \equiv -1 \pmod{p}.\]
\end{lemma}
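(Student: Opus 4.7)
The plan is to recognize this as a direct consequence of Wilson's theorem $(p-1)! \equiv -1 \pmod{p}$, combined with the observation that $f$ is an affine bijection of $\mathbb{F}_p$.

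First, I would note that since $a \neq 0$ in $\mathbb{F}_p$, the map $f: \mathbb{F}_p \to \mathbb{F}_p$ defined by $f(n) = an+b$ is a bijection (its inverse is $m \mapsto a^{-1}(m-b)$). The unique preimage of $0$ under $f$ is exactly $s' \equiv -a^{-1}b \pmod{p}$, because $f(s') = a(-a^{-1}b) + b \equiv 0 \pmod{p}$.

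Next, I would use this bijectivity to conclude that $f$ restricts to a bijection from $S = \mathbb{F}_p \setminus \{s'\}$ onto $\mathbb{F}_p \setminus \{0\} = \mathbb{F}_p^*$. Consequently, the product $\prod_{s \in S} f(s)$ is a reordering of the product $\prod_{t \in \mathbb{F}_p^*} t = (p-1)!$, so by Wilson's theorem,
\[
\prod_{s \in S} f(s) \equiv (p-1)! \equiv -1 \pmod{p}.
\]

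There is no real obstacle here; the only thing to be careful about is justifying the bijection $S \to \mathbb{F}_p^*$, which is immediate from $a \in \mathbb{F}_p^\times$ and the characterization of $s'$ as the unique root of $f$ in $\mathbb{F}_p$.
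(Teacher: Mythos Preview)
Your proof is correct and follows essentially the same approach as the paper, which simply notes that the result is a consequence of Wilson's theorem together with the fact that any linear function on $\mathbb{F}_p$ permutes its elements. Your write-up is in fact more detailed than the paper's one-sentence justification.
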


\begin{proof}
This is a consequence of Wilson's theorem, and the fact that any linear function defined on $\F_p$ simply permutes the elements of $\F_p$.
\end{proof}

The following two lemmas are slightly reformulated versions of Lemma 2 and 3 in \cite{Fransson2017}. 

\begin{lemma}\label{sumident} Let $p$ be an odd prime. For each integer $n\geq1$ let $\mathcal{R}_n$ and $\mathcal{T}_n$ in $\Q_p$ be defined by
\begin{equation}\label{Rdef}\mathcal{R}_n:=(2n-1)!!\sum_{r = 1}^{n}\left[\prod_{j=r+1}^{n}\frac{2j}{2j-1}\right],\end{equation}
and
\begin{equation}\label{Tdef}\mathcal{T}_n:=(2n+1)!!\sum_{j=1}^n\frac{(2j)!!}{(2j+1)!!}.
\end{equation}
Then \[\mathcal{R}_n = (2n+1)!! - (2n)!! \text{ and }\mathcal{T}_n = (2n+2)!! - 2(2n+1)!!.\] In particular, $\widetilde{\mathcal{R}}_p = \widetilde{\mathcal{T}}_p = 0, \widetilde{\mathcal{R}}_{p-1}= 1, \widetilde{\mathcal{T}}_{p-1} = 0$ and $\widetilde{\mathcal{R}}_{p-2} = -1/2, \widetilde{\mathcal{T}}_{p-2} = -1$.
\end{lemma}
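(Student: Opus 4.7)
The plan is to establish the two closed forms by induction on $n$, and then to deduce each of the six reductions in $\F_p$ by a short direct computation using Wilson's theorem and Fermat's little theorem. The identities themselves are more fundamental than the reductions, so I would prove them first and treat the reductions as corollaries.

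For the closed form of $\mathcal{R}_n$, the base case $n=1$ gives $\mathcal{R}_1 = 1 = 3!! - 2!!$. For the inductive step, I would isolate the $r=n$ summand (whose product is empty and contributes exactly $(2n-1)!!$) and pull the common factor $2n/(2n-1)$ out of the remaining $n-1$ summands. Since $(2n-1)!! \cdot 2n/(2n-1) = 2n \cdot (2n-3)!!$, this produces the recursion
\[
\mathcal{R}_n = (2n-1)!! + 2n\,\mathcal{R}_{n-1}.
\]
Substituting $\mathcal{R}_{n-1} = (2n-1)!! - (2n-2)!!$ and simplifying yields $(2n+1)!! - (2n)!!$. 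An entirely analogous isolation of the $j=n$ term in $\mathcal{T}_n$ produces the recursion $\mathcal{T}_n = (2n+1)\,\mathcal{T}_{n-1} + (2n)!!$, which collapses to $(2n+2)!! - 2(2n+1)!!$ under the induction hypothesis.

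For the reductions, two elementary observations suffice. First, $(2m)!! = 2^m\,m!$, so $(2m)!! \equiv 0 \pmod{p}$ whenever $m \geq p$; the boundary cases are handled via $2^{p-1} \equiv 1$, Wilson's theorem $(p-1)! \equiv -1$, and $(p-2)! = (p-1)!/(p-1) \equiv 1$, giving $(2p-2)!! \equiv -1$ and $(2p-4)!! \equiv 1/2$. Second, since $p$ is an odd prime, $p$ itself appears among the factors of $(2m+1)!!$ as soon as $2m+1 \geq p$, so $(2p-3)!!$, $(2p-1)!!$ and $(2p+1)!!$ all vanish mod $p$. Substituting these values into the two closed forms produces the six stated reductions in one line each.

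I do not anticipate a genuine obstacle: the induction is routine, and the mod-$p$ reductions are immediate from Wilson once one tracks carefully which double factorials vanish. The only mild subtlety is remembering that vanishing of $(2m+1)!!$ modulo $p$ is governed by whether the odd integer $p$ itself lies in the product range, while vanishing of $(2m)!!$ is governed by whether $p$ divides the ordinary factorial $m!$; keeping these two criteria distinct is the one place where a careless step could introduce an error.
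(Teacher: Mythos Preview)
Your proposal is correct and follows essentially the same approach as the paper: the reductions are computed from the closed forms via $(2m)!! = 2^m m!$, Fermat, and Wilson, exactly as the paper does. The only difference is cosmetic---the paper cites \cite{Fransson2017} for the closed-form identities and the $n=p$ reductions, whereas you supply the short induction argument explicitly, making your version self-contained.
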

\begin{proof}
All parts of the proof except for the evaluation of $\widetilde{\mathcal{R}}_n$ and $\widetilde{\mathcal{T}}_n$ at $n=p-1$ and $n=p-2$ are given in \cite{Fransson2017}. Thus to complete the proof we compute $\widetilde{\mathcal{R}}_{p-1}, \widetilde{\mathcal{R}}_{p-2}, \widetilde{\mathcal{T}}_{p-1}$ and $\widetilde{\mathcal{T}}_{p-2}$. We note that $(2n)!! = 2^nn!$, and that $\nu_p((2p-3)!!) = 1$ and thus we have $\widetilde{\mathcal{R}}_{p-1} = 0 - 2^{p-1}(p-1)! = 0-(-1),$ and $\widetilde{\mathcal{R}}_{p-2} = 0-2^{p-2}(p-2)! = -1/2.$ 
Also note that $\nu_p((2p)!!) = \nu_p((2p-1)!!)=1$. Consequently  $\widetilde{\mathcal{T}}_{p-1} = 0,$  and $\widetilde{\mathcal{T}}_{p-2} = (2p-2)!!-0 = -1$.
\end{proof}

\begin{lemma}\label{sumidentfinitefield}
Let $p$ be an odd prime and let $a,b$ be integers. For every integer $n\geq1$ let $\mathcal{S}_n(a,b)$ in $\Q_p$ be defined by
\begin{equation}\label{Sdef}\mathcal{S}_n(a,b) := (2n+1)!!\sum_{j=1}^n\frac{a j + b}{2j+1}.\end{equation}
Then $\mathcal{S}_n(a,b) \in \mathbb{Z}_p$ and in particular $\widetilde{\mathcal{S}}_p(a,b)  = \widetilde{\mathcal{S}}_{p-1}(a,b)= a/2 - b$, and $\widetilde{\mathcal{S}}_{p-2} = -a/2+b.$
\end{lemma}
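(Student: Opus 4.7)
The plan is to split the proof into two parts: first verifying that $\mathcal{S}_n(a,b)$ is a $p$-adic integer (in fact, a rational integer), then computing its reduction in the three cases $n \in \{p-2, p-1, p\}$ via the same mechanism that drove Lemma \ref{sumident}, namely Lemma \ref{pole} combined with Wilson's theorem.

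For integrality I would rewrite $(2n+1)!! = \prod_{i=0}^n(2i+1)$ and observe that each denominator $2j+1$ appearing in the sum cancels exactly one factor, giving
\[
\mathcal{S}_n(a,b) = \sum_{j=1}^n (aj+b) \prod_{\substack{0 \le i \le n \\ i \neq j}} (2i+1) \in \Z \subset \Z_p.
\]

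For the reductions, I would factor
\[
\mathcal{S}_n(a,b) = \frac{(2n+1)!!}{p} \cdot \left( p \sum_{j=1}^n \frac{aj+b}{2j+1} \right).
\]
This factorisation is legitimate for $n \in \{p-2, p-1, p\}$ because among the odd numbers $1, 3, \dots, 2n+1$ the only one divisible by $p$ is $p = 2 \cdot (p-1)/2 + 1$ itself, the next odd multiple $3p$ exceeding $2p+1$; hence $(2n+1)!!/p \in \Z$. The function $f(j) := (aj+b)/(2j+1)$ has at most a simple pole in the range $j \in [1,n]$, located at $j = (p-1)/2 \in [1, p-2]$. Applying Lemma \ref{pole} then gives
\[
\widetilde{p \sum_{j=1}^n \frac{aj+b}{2j+1}} = p \cdot f\!\left(\tfrac{p-1}{2}\right) = a \cdot \tfrac{p-1}{2} + b \equiv -\tfrac{a}{2} + b \pmod p.
\]

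It remains to compute $\widetilde{(2n+1)!!/p}$ in each case. This is the product of the odd numbers $1, 3, \dots, 2n+1$ with the single factor $p$ removed, and I would reduce it mod $p$ by grouping residues: for $n = p-1$, the odd factors below $p$ contribute the odd nonzero residues, the odd factors above $p$ contribute the even nonzero residues, so every nonzero class of $\F_p$ appears exactly once and the product equals $(p-1)! \equiv -1$ by Wilson's theorem; for $n = p$ the additional factor $2p+1 \equiv 1$ does not change anything, so the product is again $-1$; for $n = p-2$ we are missing the residue $2p-1 \equiv -1$, leaving $(p-2)! \equiv 1$. Multiplying these three values by the residue $-a/2 + b$ produces the three claimed reductions. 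The only genuinely delicate step is this last mod-$p$ bookkeeping; the rest is a direct unpacking of definitions together with Lemma \ref{pole}.
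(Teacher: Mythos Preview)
Your proof is correct and follows essentially the same route as the paper: identify the unique simple pole of $f(j)=(aj+b)/(2j+1)$ at $j=(p-1)/2$, apply Lemma~\ref{pole}, and reduce the prefactor $(2n+1)!!/p$ modulo $p$ via Wilson's theorem. The only cosmetic differences are that the paper invokes Lemma~\ref{generalwilson} to get $(2p-1)!!/p\equiv -1$ in one stroke (where you group odd residues below and above $p$ by hand), and that the paper defers the integrality of $\mathcal{S}_n(a,b)$ and the case $n=p$ to \cite{Fransson2017} while you spell these out directly.
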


\begin{proof}
As for the proof of Lemma \ref{sumident} the proof is given in \cite{Fransson2017} except for the calculations of $\widetilde{\mathcal{S}}_{p-1}$ and $\widetilde{\mathcal{S}}_{p-2}$. Note that for $j$ in $[1,p-1]$ the function $f(j)=(aj+b)/(2j+1)$ has exactly one pole. This pole occurs for  $j=(p-1)/2$ and is of order one. Moreover, by Lemma \ref{generalwilson},  $\frac{(2p-1)!!}{p} = -1 \pmod{p}.$ Consequently by Lemma \ref{pole} $\widetilde{\mathcal{S}}_{p-1} = -(\frac{a(p-1)}{2}+b),$ and $\widetilde{\mathcal{S}}_{p-2} = (\frac{a(p-1)}{2}+b).$
\end{proof}

The main idea behind finding the reductions of the sums of rational functions we set out to find, is to compute and sum over its residues. In Lemma \ref{lemmaevenfact}, \ref{polydoublefact} and \ref{doublefactinner} we discuss three different types of functions that will repeatedly occur in later lemmas, and we address how to compute their corresponding reductions, or in the case of Lemma \ref{doublefactinner}, how to express it in simpler terms.

\begin{lemma}\label{lemmaevenfact} Let $p$ be an odd prime. Let $f:\mathbb{Z}_p \rightarrow \mathbb{Z}_p$, and for each integer $n\geq1$ let $\mathcal{F}_n \in \mathbb{Q}_p$ be defined as
 \[\mathcal{F}_n = (2n+2)!!\sum_{j=1}^{n-1} \frac{f(j)}{(2j+4)!!}.\] Then $\mathcal{F}_n\in \Z_p$ and \[\widetilde{\mathcal{F}}_p = 2\widetilde{f}(p-2)+\widetilde{f}(p-1).\]
\end{lemma}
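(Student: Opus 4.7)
The plan is to make the expression $\mathcal{F}_n$ explicit as a sum of integers and then read off its residue modulo $p$ when $n=p$ by tracking which terms carry $p$ in the denominator and which do not.

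First I would rewrite the ratio of double factorials in closed form. Since $(2n+2)!! = 2\cdot 4\cdots (2n+2)$, cancellation gives
\[
\frac{(2n+2)!!}{(2j+4)!!} \;=\; \prod_{k=j+3}^{n+1}(2k) \;=\; 2^{\,n-j-1}\,\frac{(n+1)!}{(j+2)!},
\]
which is a (positive) integer for every $j$ in $[1,n-1]$. Consequently $\mathcal{F}_n = \sum_{j=1}^{n-1} 2^{n-j-1}\, f(j)\, (n+1)!/(j+2)!$ lies in $\mathbb{Z}_p$, since $f$ takes values in $\mathbb{Z}_p$. This disposes of the integrality claim.

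Next I would specialize to $n=p$ and carry out the $p$-adic analysis of $(p+1)!/(j+2)!$. Because $\nu_p((p+1)!) = 1$, any $j$ with $j+2 < p$ contributes $\nu_p((p+1)!/(j+2)!) = 1$, hence vanishes modulo $p$. The only surviving indices are thus $j=p-2$ and $j=p-1$, where $(j+2)!$ itself absorbs the unique factor of $p$. It is then a direct computation of the two surviving terms: for $j=p-1$ the ratio $(2p+2)!!/(2p+2)!! = 1$, giving $\widetilde f(p-1)$; for $j=p-2$ the ratio is $(2p+2)!!/(2p)!! = 2p+2 \equiv 2 \pmod p$, giving $2\widetilde f(p-2)$. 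Summing yields the claimed reduction.

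There is no real obstacle here — the whole argument is bookkeeping on $p$-adic valuations of factorials, exactly the kind of computation already used in Lemmas \ref{sumident} and \ref{sumidentfinitefield}. The only point that deserves a sentence of care is the boundary index $j=n-1$ (empty product equal to $1$), and the observation that the factor $2p$ appearing in $(2p+2)!!$ is absent from every $(2j+4)!!$ with $j \leq p-3$, which is what forces those terms to drop modulo $p$.
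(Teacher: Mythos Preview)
Your proof is correct and follows essentially the same approach as the paper: both arguments identify $j=p-2$ and $j=p-1$ as the only indices where the coefficient $(2p+2)!!/(2j+4)!!$ is a $p$-adic unit, and evaluate those two surviving terms directly. The paper packages the ``discard all non-pole terms'' step as an invocation of Lemma~\ref{pole}, whereas you unwind it by hand via the factorial identity $(2n+2)!!/(2j+4)!! = 2^{n-j-1}(n+1)!/(j+2)!$; this makes your version a bit more self-contained but is not a substantively different route.
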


\begin{lemma}\label{polydoublefact} Let $p\geq7$ be a prime. Let $q \in \mathbb{Z}_p[x]$ be a polynomial such that $\deg(q) < (p+3)/2$, and for each integer $n\geq1$ let $\mathcal{F}_n' \in \mathbb{Q}_p$ be defined as
 \[\mathcal{F}_n' := (2n+3)!!\sum_{j=1}^{n-1} \frac{q(j)(2j)!!}{(2j+5)!!}.\] Then $\mathcal{F}_n' \in \mathbb{Z}_p$ and  $\widetilde{\mathcal{F}}_p' = 0.$
\end{lemma}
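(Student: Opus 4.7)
The plan is to apply Lemma \ref{pole} after identifying the poles of the summand $f(j) := q(j)(2j)!!/(2j+5)!!$ for $j\in\{1,\ldots,p-1\}$, and then to show that the resulting weighted sum of residues reduces to an $N$th forward difference of $q$ with $N := (p+3)/2$; this vanishes under the hypothesis $\deg q < N$.

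First I would dispatch integrality: for $j\leq n-1$, the quotient $(2n+3)!!/(2j+5)!! = (2n+3)(2n+1)\cdots(2j+7)$ is a (possibly empty) product of positive odd integers, so each summand of $\mathcal{F}_n'$ lies in $\Z_p$. Specializing to $n=p$, the double factorial $(2p+3)!!$ has $\nu_p=1$, while $(2j+5)!!$ has $\nu_p=1$ exactly for $j$ in $J := \{(p-5)/2,(p-3)/2,\ldots,p-1\}$ (using $p\geq 7$ to ensure $2j+5 < 3p$, so only simple poles arise). For $j\notin J$ the product $(2p+3)!!\,f(j)$ lies in $p\Z_p$ and reduces to zero; for $j\in J$ I set
\[C_j := \frac{(2p+3)!!(2j)!!}{(2j+5)!!} = (2p+3)(2p+1)\cdots(2j+7)\cdot(2j)!! \,\in\, \Z_p^*,\]
so that $\widetilde{\mathcal{F}}_p' = \sum_{j\in J} q(j)\,\widetilde{C_j}$.

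The key step is to show that, writing $s := j - (p-5)/2 \in \{0,1,\ldots,N\}$, the residues satisfy $\widetilde{C_j} = \lambda\,(-1)^{N-s}\binom{N}{s}$ in $\F_p$ for a single constant $\lambda$. A direct calculation gives
\[\frac{C_j}{C_{j-1}} = \frac{(2j+3)!!}{(2j+5)!!}\cdot\frac{(2j)!!}{(2j-2)!!} = \frac{2j}{2j+5},\]
and modulo $p$ one has $2j\equiv 2s-5$, $2j+5\equiv 2s$, and $N\equiv 3/2$, whence
\[\frac{C_j}{C_{j-1}} \equiv \frac{2s-5}{2s} \equiv -\frac{N-s+1}{s} \pmod p.\]
This is precisely the ratio $\bigl[(-1)^{N-s}\binom{N}{s}\bigr]\big/\bigl[(-1)^{N-s+1}\binom{N}{s-1}\bigr]$; since both sequences satisfy the same first-order recurrence in $s\geq 1$, they are proportional throughout $\{0,1,\ldots,N\}$, which establishes the claimed identity.

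Finally, with $c := (p-5)/2$, the standard finite-difference formula $\sum_{s=0}^N (-1)^{N-s}\binom{N}{s}\,q(c+s) = \Delta^N q(c)$ yields
\[\widetilde{\mathcal{F}}_p' = \lambda\,\Delta^N q\bigl((p-5)/2\bigr) = 0,\]
since $\Delta^N$ annihilates polynomials of degree less than $N$. The main obstacle is the proportionality of $\widetilde{C_j}$ to $(-1)^{N-s}\binom{N}{s}$; once the matching of ratios is in hand the conclusion is routine, but care is needed with the modular identities (notably $(p+5)/2\equiv 5/2\pmod p$) and with checking that $2j+5\not\equiv 0\pmod p$ along the recursion, which holds precisely because $s\geq 1$ throughout the inductive step.
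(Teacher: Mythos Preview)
Your proof is correct, and it takes a genuinely different route from the paper's.

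The paper proceeds via Lemma~\ref{priordoublefact}, which establishes a closed-form identity for the auxiliary sums $\mathcal{K}(n,k)=\sum_{j=1}^{n-1}(2(j+k))!!/(2j+5)!!$; from that identity one reads off $\mathcal{K}(p,k)\in\Z_p$ for $k<(p+3)/2$. Since $\mathcal{K}(p,k)$ is a $\Z_p$-linear combination of the ``monomial'' sums $\mathcal{G}_\ell=\sum_j j^\ell(2j)!!/(2j+5)!!$ with invertible leading coefficient $2^k$, a triangular argument shows each $\widetilde{p\mathcal{G}}_\ell=0$ for $\ell<(p+3)/2$, and the result for arbitrary $q$ follows by linearity.

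Your argument bypasses the closed-form identity entirely. You identify the $(p+5)/2$ values of $j$ at which the summand survives reduction, check directly via the ratio $C_j/C_{j-1}=2j/(2j+5)$ that the weights $\widetilde{C_j}$ are proportional to alternating binomials $(-1)^{N-s}\binom{N}{s}$ with $N=(p+3)/2$, and conclude by the finite-difference identity $\sum_s(-1)^{N-s}\binom{N}{s}q(c+s)=\Delta^Nq(c)=0$. This is shorter and, arguably, more transparent: the degree bound $(p+3)/2$ emerges naturally as one less than the number of surviving terms, rather than as a technical constraint on an auxiliary identity. The paper's route has the compensating advantage that Lemma~\ref{priordoublefact} is an exact identity valid for all $n$, not just modulo $p$, though in fact the paper uses it only here. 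Your verification that $2j+5\equiv 2s\not\equiv 0$ along the recursion (since $1\le s\le N<p$) and that $\binom{N}{s}\not\equiv 0$ (since $N<p$) closes the only delicate points.
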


\begin{lemma}\label{doublefactinner}
Let $p$ be an odd prime and let $f:\mathbb{Q}_p\rightarrow \mathbb{Q}_p$. Furthermore, for any integer $n\geq 1$ let $\mathcal{F}_n''$ in $\mathbb{Q}_p$ be defined as
\[\mathcal{F}_n'' :=(2n+3)!!\sum_{j=1}^{n-1}\frac{(2j+4)!!}{(2j+5)!!}\sum_{i=1}^{j-1}f(i).\]
Then \[\mathcal{F}_n'' = (2n+4)!!\sum_{i=1}^{n-1}f(i) - (2n+3)!!\sum_{i=1}^{n-1}f(i)\frac{(2i+6)!!}{(2i+5)!!}.\]
\end{lemma}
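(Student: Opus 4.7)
The strategy is a Fubini-type interchange of summation followed by a telescoping evaluation of the resulting inner sum, using the closed form for $\mathcal{T}_m$ provided by Lemma \ref{sumident}.

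First, I would swap the order of summation. Since the index pair $(i,j)$ ranges over $\{(i,j): 1 \leq i \leq j-1,\ 1 \leq j \leq n-1\}$, which coincides with $\{(i,j): 1 \leq i \leq n-2,\ i+1 \leq j \leq n-1\}$, we obtain
\[\mathcal{F}_n'' = (2n+3)!! \sum_{i=1}^{n-2} f(i) \sum_{j=i+1}^{n-1} \frac{(2j+4)!!}{(2j+5)!!}.\]

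Next I would evaluate the inner sum in closed form. After the shift $j' := j+2$, we have $(2j+4)!!/(2j+5)!! = (2j')!!/(2j'+1)!!$, so
\[\sum_{j=i+1}^{n-1} \frac{(2j+4)!!}{(2j+5)!!} = \sum_{j'=i+3}^{n+1} \frac{(2j')!!}{(2j'+1)!!}.\]
Writing this as a difference of two partial sums and applying the identity $\mathcal{T}_m = (2m+2)!! - 2(2m+1)!!$ from Lemma \ref{sumident}, which is equivalent to $\sum_{j'=1}^{m} (2j')!!/(2j'+1)!! = (2m+2)!!/(2m+1)!! - 2$, the $-2$ constants cancel and the inner sum equals
\[\frac{(2n+4)!!}{(2n+3)!!} - \frac{(2i+6)!!}{(2i+5)!!}.\]

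Substituting this back and distributing the prefactor $(2n+3)!!$ yields
\[\mathcal{F}_n'' = (2n+4)!! \sum_{i=1}^{n-2} f(i) - (2n+3)!! \sum_{i=1}^{n-2} f(i) \frac{(2i+6)!!}{(2i+5)!!}.\]
To match the statement, I would finally observe that the $i=n-1$ contributions on the right-hand side cancel, since $(2(n-1)+6)!!/(2(n-1)+5)!! = (2n+4)!!/(2n+3)!!$ makes the two $i=n-1$ terms identical. This allows extending both outer sums to $1 \leq i \leq n-1$, giving exactly the claimed formula.

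The proof is essentially a clean telescoping argument, so the only real obstacle is bookkeeping: tracking the index shift $j \mapsto j+2$, correctly handling the boundary term at $i=n-1$, and identifying that the inner partial-sum difference is precisely the quantity controlled by Lemma \ref{sumident}.
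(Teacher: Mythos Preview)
Your proof is correct and follows essentially the same approach as the paper: interchange the order of summation, evaluate the inner sum using the closed form for $\mathcal{T}_m$ from Lemma \ref{sumident}, and substitute back. The only cosmetic difference is that the paper carries the outer sum up to $i=n-1$ from the outset (the $i=n-1$ inner sum being empty), whereas you sum to $i=n-2$ and then observe that the $i=n-1$ terms on the right-hand side cancel; both are equivalent.
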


\begin{proof}[Proof of Lemma \ref{lemmaevenfact}]
The lemma is a direct consequence of the fact that by Lemma \ref{pole}
\[\mathcal{F}_p \equiv (2p+2)!!\left(\frac{f(p-2)}{(2p)!!} + \frac{f(p-1)}{(2p+2)!!}\right) \mod p\mathbb{Z}_p.\]\end{proof} 

The proof of Lemma \ref{polydoublefact} and Lemma \ref{doublefactinner} follows after the statement and proof of the following lemma, which will be utilized in the proof of Lemma \ref{polydoublefact}.

\begin{lemma}\label{priordoublefact} Let $p \geq 7$ be a prime. Furthermore, let $n\geq2$ and $k\geq0$ be integers, and define $\mathcal{K}(n,k)$ in $\Q_p$
 as \[\mathcal{K}(n,k) :=\sum_{j=1}^{n-1} \frac{(2(j+k))!!}{(2j+5)!!}.\] Then \begin{equation}\label{mathfrakS}\mathcal{K}(n,k) = \frac{(2(n+k))!!}{(2k-3)(2n+3)!!}-\frac{(2k+2)!!}{(2k-3)5!!}.\end{equation} In particular for $k < (p+3)/2$, we have \begin{equation}\label{dubbelfactkmodp}\mathcal{K}(p,k) \in\Z_p.\end{equation}
\end{lemma}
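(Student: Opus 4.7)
The plan is to establish the closed form (\ref{mathfrakS}) by recognizing the summand as a telescoping difference, and then to derive the integrality statement (\ref{dubbelfactkmodp}) by a direct count of $p$-adic valuations in the resulting formula.

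For the telescoping step, I would introduce the candidate antiderivative $a_j := (2(j+k))!!/(2j+3)!!$, obtained by lowering the denominator's double factorial by one step relative to the summand. Using the elementary identities $(2j+2k+2)!! = (2j+2k+2)\,(2j+2k)!!$ and $(2j+5)!! = (2j+5)(2j+3)!!$ to rewrite both $a_{j+1}$ and $a_j$ over the common denominator $(2j+5)!!$, a short computation yields
\[
a_{j+1} - a_j \;=\; (2k-3)\,\frac{(2(j+k))!!}{(2j+5)!!}.
\]
Hence the summand in $\mathcal{K}(n,k)$ equals $(a_{j+1}-a_j)/(2k-3)$, and telescoping from $j=1$ to $j=n-1$ gives $\mathcal{K}(n,k) = (a_n - a_1)/(2k-3)$, which is precisely the right-hand side of (\ref{mathfrakS}). (No division by zero occurs since $2k-3 \neq 0$ for $k \in \mathbb{Z}_{\geq 0}$.)

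For the integrality at $n=p$, I would bound $\nu_p$ of each factor in the closed form. Since $k < (p+3)/2 < p$ and $p\geq 7$, the integer $2k-3$ lies in $\{-3,-1,1,3,\dots,p-4\}$ and is a $p$-adic unit, and $5!! = 15$ is a unit as well. The product $(2k+2)!!$ consists of even integers at most $2k+2 < p+5 < 2p$; none of these equals $p$ (which is odd), so $\nu_p((2k+2)!!) = 0$, and the second term in (\ref{mathfrakS}) is therefore in $\Z_p$. For the first term, using $(2m)!! = 2^m m!$ together with Legendre's formula, I would check $\nu_p((2(p+k))!!) = \nu_p((p+k)!) = 1$, because $p \leq p+k < 2p$, and likewise $\nu_p((2p+3)!!) = 1$ (e.g.\ from $(2p+3)!! = (2p+3)!/((2p+2)!!)$), so the ratio in the first term has $\nu_p = 0$. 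Combining these gives $\mathcal{K}(p,k) \in \Z_p$. The only mild subtlety is guessing the right antiderivative $a_j$; once one observes that the factor $2k-3$ in the telescoping must come from the difference $(2j+2k+2) - (2j+5)$, the choice of $a_j$ is forced and the remainder of the argument is a routine double-factorial identity plus a valuation count.
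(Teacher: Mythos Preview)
Your proof is correct and follows essentially the same route as the paper. The paper establishes (\ref{mathfrakS}) by induction on $n$, and the induction step is precisely your telescoping identity $a_{j+1}-a_j=(2k-3)\,(2(j+k))!!/(2j+5)!!$; the integrality argument via valuation counts is also the same, only you spell out a few details the paper leaves implicit. One inconsequential slip: for $k<(p+3)/2$ the maximum of $2k-3$ is $p-2$, not $p-4$ (take $k=(p+1)/2$), but this does not affect the conclusion that $2k-3$ is a $p$-adic unit.
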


\begin{proof}
We proceed by induction in $n$. Note that for $n=2$, by definition
\[\mathcal{K}(2,k) = \sum_{j=1}^{1}\frac{(2(j+k))!!}{(2j+5)!!} = \frac{(2k+2)!!}{7!!}.\] On the other hand \[\frac{(2(2+k))!!}{(2k-3)7!!}-\frac{(2k+2)!!}{(2k-3)5!!} = \frac{(2k+2)!!}{(2k-3)5!!}\left(\frac{(2k+4)}{7}-1\right) = \frac{(2k+2)!!}{7!!}.\] This proves (\ref{mathfrakS}) for $n=2$. Assume that (\ref{mathfrakS}) is valid for $n\geq2$. Then 
\begin{align*}
\mathcal{K}(n+1,k) &= \sum_{j=1}^{n} \frac{(2(j+k))!!}{(2j+5)!!}\\
&= \frac{(2(n+k))!!}{(2n+5)!!} + \sum_{j=1}^{n-1} \frac{(2(j+k))!!}{(2j+5)!!}.\\
\end{align*}
By the induction hypothesis we obtain
\begin{align*}
\mathcal{K}(n+1,k)&=  \frac{(2(n+k))!!}{(2n+5)!!} + \frac{(2(n+k))!!}{(2k-3)(2n+3)!!}-\frac{(2k+2)!!}{(2k-3)5!!}\\
&= \frac{(2(n+k))!!(2k-3) + (2(n+k))!!(2n+5)}{(2k-3)(2n+5)!!}-\frac{(2k+2)!!}{(2k-3)5!!}\\
&= \frac{(2(n+k))!!(2k-3 +2n+5)}{(2k-3)(2n+5)!!}-\frac{(2k+2)!!}{(2k-3)5!!}\\
&= \frac{(2(n+k)+2)!!}{(2k-3)(2n+5)!!}-\frac{(2k+2)!!}{(2k-3)5!!},\\
\end{align*}
which completes the induction step. The second statement of the lemma follows by letting $n=p$ in (\ref{mathfrakS}). Clearly $\nu_p((2p+2k)!!) = \nu_p((2p+5)!!)=1$. Moreover for $k<(p+3)/2$ we have $\nu_p(2k-3) = 0$. Accordingly, $\mathcal{K}(p,k)$ is in $\Z_p$ for all $k < (p+3)/2$ by definition. 
\end{proof}

\begin{proof}[Proof of Lemma \ref{polydoublefact}] For all $1\leq j\leq n-1$ we note that $\nu_p((2n+3)!!) \geq \nu_p((2j+5)!!)$ and thus $\mathcal{F}_n' \in \Z_p$. Put \[q_k(j) := \begin{cases}1, \text{ if $k=0$}\\{\displaystyle \prod_{i=1}^k (2(j+i)),\text{ if $k\geq1$}}.\end{cases}\]
Then by definition \[\mathcal{K}(p,k) = \sum_{j=1}^{p-1}\frac{(2j)!!}{(2j+5)!!}q_k(j).\]
 From the definition of $q_k(j)$ we have $\deg(q_k(j)) =k$, and we write $q_k(j) = b_k^{(k)} j^k + b_{k-1}^{(k)}j^{k-1} + \dots +b_1^{(k)}j + b_0^{(k)}$. Moreover, we define \[
\mathcal{G}_k := \sum_{j=1}^{p-1}j^k \frac{(2j)!!}{(2j+5)!!}.\] Hence,
 \begin{align}\label{lineardep}\mathcal{K}(p,k)   
&= \sum_{j=1}^{p-1} \frac{(2j)!!}{(2j+5)!!}(b_k^{(k)} j^k +  \dots + b_0^{(k)})\notag\\
&=b_k^{(k)}\mathcal{G}_k + \dots + b_0^{(k)}\mathcal{G}_0.
 \end{align}
Assume that $k<(p+3)/2$. Then by Lemma \ref{priordoublefact} we have
 \begin{equation}\label{lineardepmodp}0\equiv p\mathcal{K}(p,k) \equiv p\left(b_k^{(k)}\mathcal{G}_k + \dots +b_0^{(k)}\mathcal{G}_0\right) \mod p\Z_p.\end{equation}
 We know by Lemma \ref{priordoublefact} that $\widetilde{p\mathcal{G}}_0 = 0$. Consequently, for $k=1$ (\ref{lineardepmodp}) implies that $\widetilde{p\mathcal{G}}_1 = 0$ 
 independently of $b_1^{(1)}$. Inductively $\widetilde{p\mathcal{G}}_k =0$ for all $k<(p+3)/2$ independently of the coefficient $b_k^{(k)}$. Accordingly, (\ref{lineardepmodp}) holds even if the polynomial $q_k(j)$ is interchanged by any polynomial $q$ of degree strictly less than $(p+3)/2$. This completes the proof of Lemma \ref{polydoublefact}.
\end{proof}

\begin{proof}[Proof of Lemma \ref{doublefactinner}]
By interchanging the order in the summations in $\mathcal{F}_n''$ we obtain
\begin{equation}\label{startsum}\mathcal{F}_n'' = (2n+3)!!\sum_{i=1}^{n-1}f(i)\sum_{j=i+1}^{n-1}\frac{(2j+4)!!}{(2j+5)!!}.\end{equation} From Lemma \ref{sumident} we have \begin{align}\label{rewritetn}
\sum_{j=i+1}^{n-1} \frac{(2j+4)!!}{(2j+5)!!} &= \sum_{j=1}^{n-1} \frac{(2j+4)!!}{(2j+5)!!} - \sum_{j=1}^{i} \frac{(2j+4)!!}{(2j+5)!!} \notag\\
&= \frac{(2n+4)!!-2(2n+3)!!}{(2n+3)!!}-\frac{(2i+6)!!-2(2i+5)!!}{(2i+5)!!}.
\end{align} 
Insertion of (\ref{rewritetn}) in (\ref{startsum}) yields 
\begin{align*}
\mathcal{F}_n'' &=(2n+3)!!\sum_{i=1}^{n-1}f(i)\left(\frac{(2n+4)!!-2(2n+3)!!}{(2n+3)!!}-\frac{(2i+6)!!-2(2i+5)!!}{(2i+5)!!}\right)\\
&=((2n+4)!!-2(2n+3)!!)\sum_{i=1}^{n-1}f(i) \\
&\hspace{5mm}- (2n+3)!!\sum_{i=1}^{n-1}f(i)\frac{(2i+6)!!-2(2i+5)!!}{(2i+5)!!}\\
&=(2n+4)!!\sum_{i=1}^{n-1}f(i) - (2n+3)!!\sum_{i=1}^{n-1}f(i)\frac{(2i+6)!!}{(2i+5)!!}.
\end{align*}
\end{proof}


The upcoming Lemma \ref{dmcoeff}, Lemma \ref{Ssum} and Lemma \ref{uvxw} address the reductions of the specific functions that are considered in the proof of the Main Lemma.

\begin{lemma}\label{dmcoeff}Let $p$ be an odd prime. For each integer $n\geq1$, let $\mathcal{U}_n, \mathcal{V}_n, \mathcal{W}_n$ and $\mathcal{X}_n$ in $\mathbb{Q}_p$ be defined by
\begin{equation}\label{Udef}\mathcal{U}_n(a,b) := (2n+2)!!\sum_{j=1}^{n-1}\frac{\mathcal{S}_j(a,b)(2j+3)}{(2j+4)!!},\end{equation}
\begin{equation}\label{Vdef}\mathcal{V}_n(a,b,c) := (2n+2)!!\sum_{j=1}^{n-1}\frac{\mathcal{R}_j\cdot(aj^2+bj+c)}{(2j+4)!!},\end{equation}
\begin{equation}\label{Wdef}\mathcal{W}_n := (2n+2)!!\sum_{j=1}^{n-1}\frac{\mathcal{T}_j\cdot(2j+3)}{(2j+4)!!},\end{equation}
and
\begin{equation}\label{Xdef}\mathcal{X}_n(a,b) := (2n+2)!!\sum_{j=1}^{n-1}\frac{(aj+b)(2j+1)!!}{(2j+4)!!}.\end{equation}
Then \[\widetilde{\mathcal{U}}_p(a,b) = 3/2a -3b, \quad\widetilde{\mathcal{V}}_p(a,b,c) = -3a+b, \quad \widetilde{\mathcal{W}}_p = 2 \text{ and } \widetilde{\mathcal{X}}_p = 0.\]
\end{lemma}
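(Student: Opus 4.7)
The plan is to derive all four reductions as direct applications of Lemma \ref{lemmaevenfact}, which converts a sum of the form $(2p+2)!!\sum_{j=1}^{p-1} f(j)/(2j+4)!!$ into $2\widetilde{f}(p-2) + \widetilde{f}(p-1)$. In each of the four cases I will identify the appropriate numerator function $f$, verify it is $\Z_p$-valued, and evaluate $\widetilde{f}$ at $p-2$ and $p-1$ using the reductions already tabulated in Lemmas \ref{sumident} and \ref{sumidentfinitefield}.

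For $\mathcal{U}_p(a,b)$, I set $f(j) = \mathcal{S}_j(a,b)(2j+3)$, which lies in $\Z_p$ by Lemma \ref{sumidentfinitefield}. Using $2(p-2)+3 \equiv -1$ and $2(p-1)+3 \equiv 1 \pmod p$ together with $\widetilde{\mathcal{S}}_{p-1} = a/2 - b$ and $\widetilde{\mathcal{S}}_{p-2} = -a/2 + b$, both $\widetilde{f}(p-2)$ and $\widetilde{f}(p-1)$ collapse to $a/2 - b$, giving $\widetilde{\mathcal{U}}_p(a,b) = 3(a/2-b) = 3a/2 - 3b$. For $\mathcal{V}_p(a,b,c)$, I take $f(j) = \mathcal{R}_j(aj^2 + bj + c)$; reducing $(p-1)^2, p-1, (p-2)^2, p-2$ mod $p$ to $1, -1, 4, -2$ and substituting $\widetilde{\mathcal{R}}_{p-1} = 1$, $\widetilde{\mathcal{R}}_{p-2} = -1/2$ yields $2(-1/2)(4a-2b+c) + (a-b+c) = -3a + b$. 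For $\mathcal{W}_p$, setting $f(j) = \mathcal{T}_j(2j+3)$ and using $\widetilde{\mathcal{T}}_{p-1} = 0$, $\widetilde{\mathcal{T}}_{p-2} = -1$ yields $\widetilde{f}(p-1) = 0$ and $\widetilde{f}(p-2) = 1$, so the reduction is $2 \cdot 1 + 0 = 2$.

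The case of $\mathcal{X}_p(a,b)$ is structurally the simplest: with $f(j) = (aj+b)(2j+1)!!$, the factor $(2j+1)!!$ already acquires a factor of $p$ as soon as $2j+1 \geq p$, which happens at $j = p-2$ and $j = p-1$. Hence $(2p-3)!!$ and $(2p-1)!!$ are both divisible by $p$, so $\widetilde{f}(p-2) = \widetilde{f}(p-1) = 0$ and Lemma \ref{lemmaevenfact} gives $\widetilde{\mathcal{X}}_p = 0$ with no need to invoke the sum identities.

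There is no real obstacle beyond careful bookkeeping in the mod $p$ arithmetic. All the substantive content has been established in the preceding lemmas; Lemma \ref{dmcoeff} is a packaging of their consequences tailored to the specific rational-function summands that will arise in the proof of the Main Lemma.
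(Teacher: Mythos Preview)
Your proposal is correct and follows essentially the same route as the paper: both proofs reduce each of $\mathcal{U}_p, \mathcal{V}_p, \mathcal{W}_p, \mathcal{X}_p$ to an application of Lemma \ref{lemmaevenfact} and then plug in the values $\widetilde{\mathcal{S}}_{p-1}, \widetilde{\mathcal{S}}_{p-2}, \widetilde{\mathcal{R}}_{p-1}, \widetilde{\mathcal{R}}_{p-2}, \widetilde{\mathcal{T}}_{p-1}, \widetilde{\mathcal{T}}_{p-2}$ from Lemmas \ref{sumident} and \ref{sumidentfinitefield}. The only cosmetic difference is in the $\mathcal{X}_p$ case, where the paper observes directly that $\nu_p\bigl((2j+1)!!/(2j+4)!!\bigr)\geq 0$ for all $j$, so that $(2p+2)!!$ times the sum is automatically in $p\Z_p$; your version instead applies Lemma \ref{lemmaevenfact} and notes that $(2p-3)!!$ and $(2p-1)!!$ each contain the factor $p$, which is an equally valid way to reach $\widetilde{\mathcal{X}}_p = 0$.
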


\begin{proof}
The proof relies on repeated use of Lemma \ref{lemmaevenfact}.
For $\mathcal{U}_n$ we have \[\widetilde{\mathcal{U}}_p(a,b) = 2(2p-1)\mathcal{\widetilde{S}}_{p-2}(a,b)+(2p+1)\mathcal{\widetilde{S}}_{p-1}(a,b) = \mathcal{\widetilde{S}}_{p-1}(a,b)-2\mathcal{\widetilde{S}}_{p-2}(a,b).\] By Lemma \ref{sumidentfinitefield}, 
$\mathcal{\widetilde{S}}_{p-1}(a,b) = a/2-b$ and $\widetilde{\mathcal{S}}_{p-2} = -a/2+b$, so that $\mathcal{\widetilde{U}}_p(a,b) = 3/2a-3b.$

For $\mathcal{V}_n$ we have \[\widetilde{\mathcal{V}}_p(a,b,c) = 2(2p-1)(4a-2b+c)\mathcal{\widetilde{R}}_{p-2}+(2p+1)(a-b+c)\mathcal{\widetilde{R}}_{p-1}.\] By Lemma \ref{sumident} we obtain the reduction
\[\widetilde{\mathcal{V}}_p(a,b,c) = (-2)(4a-2b+c)(2p-4)!! + (a-b+c)(2p-2)!!.\] 
From Lemma \ref{generalwilson} the reductions of $(2p-2)!!$ and $(2p-4)!!$ are $-1$ and $1/2$. Hence,
 \[\mathcal{\widetilde{V}}_p(a,b,c) = -3a+b.\]
In a similar way we deduce by Lemma \ref{lemmaevenfact} that for $\mathcal{W}_n$ we have \[\mathcal{\widetilde{W}}_p = 2(2p-1)\mathcal{\widetilde{T}}_{p-2}+(2p+1)\mathcal{\widetilde{T}}_{p-1},\] 
and by Lemma \ref{sumident} it follows that \[\mathcal{\widetilde{W}}_p = (-2)(-1) = 2.\] Finally, concerning $\mathcal{X}_p$ we note that  \[\nu_p\left(\frac{(2j+1)!!}{(2j+4)!!}\right)\geq0.\] Hence, $\mathcal{\widetilde{X}}_p(a,b) = 0$. This completes the proof of Lemma \ref{dmcoeff}.
\end{proof}
\begin{lemma}\label{harmonic}
Let $p\geq5$ be a prime, and $\gamma = \frac{p-3}{2}$. Furthermore let $n\geq1$ be an integer, $\mathcal{H}_n$ and $\mathcal{H}_n'$ in $\mathbb{Q}_p$ be defined as \[\mathcal{H}_n := \sum_{j=1}^n \frac{1}{2j+1}\] and \[\mathcal{H}_n':= \sum_{j=1}^n \frac{1}{2j},\] then $\mathcal{H}_\gamma, \mathcal{H}_\gamma' \in \mathbb{Z}_p$ and $\mathcal{\widetilde{H}}_\gamma + \mathcal{\widetilde{H}}_\gamma' = 0.$
\end{lemma}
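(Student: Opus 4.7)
The plan is to combine the two sums into a single harmonic-type sum over consecutive integers and then invoke the standard fact that $\sum_{k=1}^{p-1} 1/k \equiv 0 \pmod{p}$.

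First I would observe that since $\gamma = (p-3)/2$, the denominators appearing in $\mathcal H_\gamma$ are the odd integers $3, 5, \dots, p-2$, and the denominators in $\mathcal H_\gamma'$ are the even integers $2, 4, \dots, p-3$. None of these is divisible by $p$, so both sums lie in $\mathbb Z_p$ and their reductions are well-defined. Combining the two lists gives every integer from $2$ up to $p-2$ exactly once, so
\begin{equation*}
\mathcal H_\gamma + \mathcal H_\gamma' \;=\; \sum_{k=2}^{p-2} \frac{1}{k}.
\end{equation*}

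Next I would relate this to the full harmonic-type sum $\sum_{k=1}^{p-1} 1/k$. The map $k \mapsto k^{-1}$ is a bijection of $(\mathbb Z/p\mathbb Z)^{\times}$, so in $\mathbb F_p$
\begin{equation*}
\sum_{k=1}^{p-1} \frac{1}{k} \;\equiv\; \sum_{k=1}^{p-1} k \;=\; \frac{p(p-1)}{2} \;\equiv\; 0 \pmod{p},
\end{equation*}
where we used that $p \geq 5$ is odd, so $(p-1)/2$ is an integer. Separating off the two endpoint terms, $1/1 + 1/(p-1) \equiv 1 + (-1) = 0 \pmod p$, leaves
\begin{equation*}
0 \;\equiv\; \sum_{k=1}^{p-1} \frac{1}{k} \;\equiv\; \sum_{k=2}^{p-2} \frac{1}{k} \pmod{p}.
\end{equation*}

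Combining the two displayed identities gives $\widetilde{\mathcal H}_\gamma + \widetilde{\mathcal H}_\gamma' = 0$ in $\mathbb F_p$, as desired. There is no genuine obstacle here — the only point to be careful about is the hypothesis $p \geq 5$, which ensures that $\gamma \geq 1$ (so the sums are nonempty) and that $(p-1)/2$ is a positive integer, allowing the bijection argument for $\sum 1/k \equiv 0$ to go through cleanly.
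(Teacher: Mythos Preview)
Your proof is correct and follows essentially the same approach as the paper: combine the two sums into $\sum_{k=2}^{p-2} 1/k$, observe that $1/1 + 1/(p-1) \equiv 0$, and use the bijection $k \mapsto k^{-1}$ on $\mathbb{F}_p^\times$ to conclude that $\sum_{k=1}^{p-1} 1/k \equiv 0 \pmod p$. The only difference is that you spell out the intermediate step $\sum 1/k \equiv \sum k = p(p-1)/2$ explicitly, whereas the paper simply asserts that the sum of all elements of $\mathbb{F}_p^\times$ vanishes.
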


\begin{proof}
By definition $\mathcal{H}_\gamma$ and $\mathcal{H}_\gamma'$ are in $\mathbb{Z}_p$, and also note that the sum of the multiplicative inverse elements of $1$ and $p-1$ is 0. Thus \[\mathcal{H}_\gamma + \mathcal{H}_\gamma' = \sum_{j=1}^{p-1} j^{-1}.\] $\mathcal{H}_\gamma + \mathcal{H}_\gamma'$ is equal to the sum of all elements in $\mathbb{F}_p^\times$. Consequently, \[\mathcal{H}_\gamma+ \mathcal{H}_\gamma' \equiv 0 \pmod{p}.\] 
\end{proof}

\begin{lemma}\label{Ssum}
Let $p\geq 7$ be a prime. For each $n\geq1$ define $\widehat{\mathcal{S}}_n, \widehat{\mathcal{T}}_n, \widehat{\mathcal{R}}_n$, and $\mathcal{Z}_n$ in $\Z_p$ as 
\begin{equation}\label{sumsumsum}\widehat{\mathcal{S}}_n(a,b,c) := (2n+3)!!\sum_{j=1}^{n-1}\frac{(cj+1)(2j+3)}{(2j+5)!!}\mathcal{S}_j(a,b),\end{equation}
\begin{equation}\label{sumsumsum2}\widehat{\mathcal{T}}_n(a) := (2n+3)!!\sum_{j=1}^{n-1}\frac{(aj+1)(2j+3)}{(2j+5)!!}\mathcal{T}_j,\end{equation}
\begin{equation}\label{sumsumsum3}\widehat{\mathcal{R}}_n(a) := (2n+3)!!\sum_{j=1}^{n-1}\frac{(2j+2)(aj+1)}{(2j+5)!!}\mathcal{R}_j,\end{equation}
\begin{equation}\label{sumsumsum4}\mathcal{Z}_n(a,b,c) := (2n+3)!!\sum_{j=1}^{n-1}\frac{aj^2+bj+c}{(2j+3)(2j+5)}.\end{equation} 
Then \[\widetilde{\widehat{\mathcal{S}}}_p(a,b,c) = 1/4(a(17-41c)+b(-4+7c)), \quad \widetilde{\widehat{\mathcal{T}}}_p(a) = 6-15a\] \[\widetilde{\widehat{\mathcal{R}}}(a)_p = 9a-3 , \text{ and } \widetilde{\mathcal{Z}}(a,b,c)_p = 6a-\frac{3}{2}b.\]
\end{lemma}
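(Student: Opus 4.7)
The plan is to treat the four sums in three stages: first the single-sum $\mathcal{Z}_n$ directly, then $\widehat{\mathcal{R}}_n$ and $\widehat{\mathcal{T}}_n$ by reducing each to a $\mathcal{Z}_n$-type piece plus a Lemma \ref{polydoublefact} piece, and finally the nested $\widehat{\mathcal{S}}_n$ by a two-dimensional residue analysis. A preliminary computation I will need throughout is the congruence $\widetilde{(2p+3)!!/p} \equiv -3 \pmod{p}$. This follows from Wilson's theorem after pulling out the single factor $p$ from $(2p+3)!!$ and writing the remaining odd factors $\{1,3,\ldots,p-2\}\cup\{p+2,p+4,\ldots,2p+3\}$ modulo $p$ as $\{1,3,\ldots,p-2\}\cup\{2,4,\ldots,p-1\}\cup\{1,3\}$, whose product is $3\cdot(p-1)! \equiv -3$.

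For $\mathcal{Z}_n$ I apply Lemma \ref{pole} directly to the summand $f(j) = (aj^2+bj+c)/[(2j+3)(2j+5)]$, which has simple poles at $j=(p-3)/2$ and $j=(p-5)/2$. Evaluating $pf$ at each pole and multiplying by $\widetilde{(2p+3)!!/p}=-3$ yields $6a - \tfrac{3}{2}b$. For $\widehat{\mathcal{R}}_n$ and $\widehat{\mathcal{T}}_n$ I substitute the closed forms $\mathcal{R}_j = (2j+1)!! - (2j)!!$ and $\mathcal{T}_j = (2j+2)!! - 2(2j+1)!!$ from Lemma \ref{sumident}. In each case one of the resulting pieces has the shape $(2n+3)!!\sum q(j)(2j)!!/(2j+5)!!$ with $\deg q \leq 3 < (p+3)/2$, so its reduction vanishes by Lemma \ref{polydoublefact}. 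The other piece is tamed by the algebraic identity $(2j+1)!!/(2j+5)!! = 1/[(2j+3)(2j+5)]$: for $\widehat{\mathcal{R}}_n$ this piece is $\mathcal{Z}_n$ evaluated at $(2a,2(a+1),2)$, while for $\widehat{\mathcal{T}}_n$ the factor $(2j+3)$ cancels to leave a single-pole sum $\sum(aj+1)/(2j+5)$ whose reduction is computed via Lemma \ref{pole}.

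For $\widehat{\mathcal{S}}_n$ the same simplification together with $\mathcal{S}_j(a,b)=(2j+1)!!\sum_{i=1}^j(ai+b)/(2i+1)$ turns the sum into the double sum
\[
\widehat{\mathcal{S}}_n = (2n+3)!!\sum_{j=1}^{n-1}\frac{cj+1}{2j+5}\sum_{i=1}^j\frac{ai+b}{2i+1}.
\]
The two-variable summand has simple poles on the triangle $\{1\leq i\leq j\leq p-1\}$ only along the disjoint lines $j=(p-5)/2$ and $i=(p-1)/2$ (disjointness holding since $(p-5)/2 < (p-1)/2$). Summing the two residue contributions by the one-dimensional Lemma \ref{pole} along each line produces two one-variable partial sums of the rational forms $\sum(ai+b)/(2i+1)$ and $\sum(cj+1)/(2j+5)$, which after splitting linearly (as in the proof of Lemma \ref{sumidentfinitefield}) reduce to evaluating truncated harmonic sums near the endpoints $(p-3)/2$ and $(p-1)/2$.

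The main obstacle, and what makes this fourth computation genuinely harder than the other three, is that neither pole contribution to $\widehat{\mathcal{S}}_n$ is individually rational in $a,b,c$: each carries a $\mathcal{H}_\gamma$ or $\mathcal{H}'_\gamma$ term with no closed-form reduction. The key point is that Lemma \ref{harmonic} provides exactly the identity $\mathcal{H}_\gamma+\mathcal{H}'_\gamma\equiv 0\pmod p$ needed to kill these terms once the two contributions are added. After this cancellation the remaining arithmetic is routine and collapses to $\tfrac14(a(17-41c)+b(-4+7c))$, matching the claim.
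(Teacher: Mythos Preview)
Your proposal is correct and follows essentially the same route as the paper: $\mathcal{Z}_p$ by direct pole analysis at $j=(p-3)/2,\,(p-5)/2$; $\widehat{\mathcal{T}}_p$ and $\widehat{\mathcal{R}}_p$ by substituting the closed forms of $\mathcal{T}_j,\mathcal{R}_j$ from Lemma~\ref{sumident}, killing the $(2j)!!/(2j+5)!!$-piece via Lemma~\ref{polydoublefact}, and handling the remaining rational piece by a pole computation; and $\widehat{\mathcal{S}}_p$ by the same double-sum expansion with the two pole-lines $j=(p-5)/2$ and $i=(p-1)/2$, followed by the cancellation $\widetilde{\mathcal{H}}_\gamma+\widetilde{\mathcal{H}}'_\gamma=0$ from Lemma~\ref{harmonic}. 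The one cosmetic difference is that for $\widehat{\mathcal{R}}_p$ you observe the surviving piece is literally $\mathcal{Z}_p(2a,2(a+1),2)$ and read off $9a-3$ from the $\mathcal{Z}$-formula, whereas the paper recomputes its two poles from scratch; this is a neat shortcut but not a different argument.
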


\begin{proof}
\textbf{Proof of $\widehat{\mathcal{S}}_n$}:
By the definition of $\mathcal{S}_j$ we obtain \[\widehat{\mathcal{S}}_n(a,b,c) = (2n+3)!!\sum_{j=1}^{n-1}\frac{cj+1}{2j+5}\sum_{i=1}^j \frac{ai+b}{2i+1}.\] Let $p\geq7$ and put $n=p$. Put 
\begin{equation}\label{sum1} \mathcal{Y}_1 := (2p+3)!!\left(\frac{c(p-5)+2}{2p}\sum_{i=1}^{(p-5)/2} \frac{ai+b}{2i+1}\right),\end{equation}
and
\begin{equation}\label{sum2}\mathcal{Y}_2:=(2p+3)!!\left(\frac{(a(p-1)+2b}{2p}\sum_{j=(p-1)/2}^{p-1}\frac{cj+1}{2j+5}\right).\end{equation}
Note that the only poles of $\widehat{\mathcal{S}}_p(a,b,c)$ occur for $j=(p-5)/2$ and $i=(p-1)/2$. Accordingly by Lemma \ref{pole} 
\begin{equation}\label{gammap} \widehat{\mathcal{S}}_p(a,b,c) \equiv \mathcal{Y}_1 + \mathcal{Y}_2 \mod p\Z_p.\end{equation} To finish the proof is to show that $\mathcal{\widetilde{Y}}_1 +  \mathcal{\widetilde{Y}}_2 = 1/4(a(17-41c)+b(-4+7c))$.

For the sum in (\ref{sum1}) we note that \[\sum_{i=1}^{(p-5)/2} \frac{ai+b}{2i+1} = \sum_{i=1}^{(p-3)/2} \frac{ai+b}{2i+1} - \frac{a(p-3)+2b}{2p-4}.\] Moreover,  
\begin{align}\label{sum1done2}\sum_{i=1}^{(p-3)/2}\frac{ai+b}{2i+1}  &=\sum_{i=1}^{(p-3)/2}\frac{a}{2} -  \left(\frac{a}{2}-b\right)\frac{1}{2i+1}\notag\\
&= \frac{a(p-3)}{4} - \left(\frac{a}{2}-b\right)\sum_{i=1}^{(p-3)/2}\frac{1}{2i+1}.\notag
 \end{align}
 By the definition of $\mathcal{H}_\gamma$ in Lemma \ref{harmonic} we then have \begin{equation}\label{sum1done}
\sum_{i=1}^{(p-3)/2}\frac{ai+b}{2i+1} = \frac{a(p-3)}{4} - \left(\frac{a}{2}-b\right)\mathcal{H}_\gamma.\end{equation}
It follows that \begin{equation}\label{y1done}\mathcal{Y}_1 = \frac{(2p+3)!!}{p}\cdot\frac{c(p-5)+2}{2}\left(\frac{a(p-3)}{4}-\left(\frac{a}{2}-b\right)\mathcal{H}_\gamma - \frac{a(p-3)+2b}{2p-4}\right).\end{equation}

By a switch of index in the sum of (\ref{sum2}) we obtain
\begin{align}\label{sumsum2}
\sum_{j=(p-1)/2}^{p-1}\frac{cj+1}{2j+5} &= \sum_{k=1}^{(p+1)/2}\frac{c(2k+p-3)+2}{2(2k+p+2)} \notag\\
&= \left(\sum_{k=1}^{(p+3)/2}\frac{c(2k+p-5)+2}{2(2k+p)}\right) - \frac{c(p-3)+2}{2p+4}\notag\\
&= \frac{1}{2}\left(\sum_{k=1}^{(p-3)/2}\frac{c(2k + p-5)+2}{2k+p}\right)\notag\\
&\hspace{5mm} - \frac{c(p-3)+2}{2p+4} + \frac{c(2p-6)+2}{4p-2} + \frac{c(2p-4)+2}{4p+2} + \frac{c(2p-2)+2}{4p+6}.\notag
\end{align}
Put \[\mathcal{P} := \frac{1}{2}\sum_{k=1}^{(p-3)/2}\frac{c(2k + p-5)+2}{2k+p} = \frac{1}{2}\sum_{k=1}^{(p-3)/2}\left(c+\frac{2-5c}{2k+p}\right) = \frac{c(p-3)}{4} + \frac{2-5c}{2}\sum_{k=1}^{(p-3)/2}\frac{1}{2k+p}.\] and \[\sigma := - \frac{c(p-3)+2}{2p+4} + \frac{c(2p-6)+2}{4p-2} + \frac{c(2p-4)+2}{4p+2} + \frac{c(2p-2)+2}{4p+6}.\]
Note that by definition \begin{equation}\label{y2done}\mathcal{Y}_2 = +\frac{(2p+3)!!}{p}\cdot\frac{a(p-1)+2b}{2}\left(\mathcal{P} + \sigma \right).\end{equation}
In view of Lemma \ref{harmonic} we have \begin{equation}\label{ptilde}\mathcal{\widetilde{P}} =-\frac{1}{2}(3/2c+(5c-2)\mathcal{\widetilde{H}}_\gamma').\end{equation}
We  also note that $\widetilde{\sigma} = (17c-2)/12$.

Together with (\ref{y1done}), (\ref{y2done}) and using that the reduction of $(2p+3)!!/p$ is $-3$, we obtain 
\begin{align*}
\mathcal{\widetilde{Y}}_1+\mathcal{\widetilde{Y}}_2&= -\frac{3}{2}(2-5c)\left(-\frac{3a}{4} - \left(\frac{a}{2}-b\right)\mathcal{\widetilde{H}}_\gamma-\frac{3a}{4} + \frac{b}{2}\right)\\
&\hspace{5mm}+(-3)\left(-\frac{a}{2}+b\right)\left(-\frac{3}{4}c-\frac{2c-5}{2}\mathcal{\widetilde{H}}_\gamma' + \frac{17c-2}{12}\right)\\
&=-\frac{3}{2}(2-5c)\left(-\frac{3}{2}a+\frac{b}{2}\right)+3\left(\frac{a}{2}-b\right)\left(\frac{2}{3}c-\frac{1}{6}\right)
\\&\hspace{5mm}+\frac{3}{2}(2-5c)\left(\frac{a}{2}-b\right)\left(\mathcal{\widetilde{H}}_\gamma+\mathcal{\widetilde{H}}_\gamma'\right).
\end{align*}
Finally using Lemma \ref{harmonic} we have
\[\widetilde{\mathcal{Y}}_1 + \widetilde{\mathcal{Y}}_2=\frac{1}{4}\left(a(17-41c)+b(-4+7c)\right).\]
In view of (\ref{gammap}) we have $\widetilde{\widehat{\mathcal{S}}}_p(a,b,c) =1/4(a(17-41c)+b(-4+7c))$ as required. 

\noindent
\textbf{Proof of $\widehat{\mathcal{T}}_n$}:
From Lemma \ref{sumident} we have $\mathcal{T}_n = (2n+2)!!-2(2n+1)!!$ and we obtain
\begin{align}
\widehat{\mathcal{T}}_n(a) &= (2n+3)!!\sum_{j=1}^{n-1}\frac{aj+1}{(2j+5)!!}(2j+3)((2j+2)!!-2(2j+1)!!)\notag\\
&=(2n+3)!!\sum_{j=1}^{n-1}\frac{(2j)!!}{(2j+5)!!}(aj+1)(2j+3)(2j+2)\notag\\
&\hspace{5mm}-2(2n+3)!!\sum_{j=1}^{n-1}\frac{aj+1}{2j+5}.
\end{align}
Let $p\geq7$ be a prime and put $n=p$. Then the reduction of the first sum is zero by Lemma \ref{polydoublefact}. The second sum has a simple pole at $j=(p-5)/2$. Hence, by Lemma \ref{pole} 
\[\widehat{\mathcal{T}}_p(a) \equiv -\frac{2(2p+3)!!}{p}\cdot \frac{a(p-5)+2}{2} \mod p\Z_p.\] The corresponding reduction is then \[\widetilde{\widehat{\mathcal{T}}}_p(a) = (-2)(-3)\left(\frac{2-5a}{2}\right) = 6-15a.\] 

\noindent
\textbf{Proof of $\widehat{\mathcal{R}}_n$}:
From Lemma \ref{sumident} we have $\mathcal{R}_n = (2n+1)!!-(2n)!!$ and we obtain
\begin{align}
\widehat{\mathcal{R}}_n(a) &= (2n+3)!!\sum_{j=1}^{n-1}\frac{2j+2}{(2j+5)!!}(aj+1)((2j+1)!!-(2j)!!)\notag\\
&=(2n+3)!!\sum_{j=1}^{n-1}\frac{(2j+2)(aj+1)}{(2j+3)(2j+5)}\notag\\
&\hspace{5mm}-(2n+3)!!\sum_{j=1}^{n-1}\frac{(2j)!!}{(2j+5)!!}(aj+1)(2j+2).
\end{align}
Let $p\geq7$, and $n=p$. Then the reduction of the second sum is zero by Lemma \ref{polydoublefact}. The only poles of the first sum occur at $j=(p-5)/2$ and $j=(p-3)/2$. Hence, by Lemma \ref{pole}
\[\widehat{\mathcal{R}}_p(a) \equiv \frac{(2p+3)!!}{p}\left( \frac{(p-3)(a(p-5)+2)}{2(p-2)} + \frac{(p-1)(a(p-3)+2)}{2(p+2)} \right)\mod p\Z_p.\] The corresponding reduction is then \[\widetilde{\widehat{\mathcal{R}}}_p(a) = (-3)\left(\frac{-12a+4}{4}\right) =9a-3.\]

\noindent
\textbf{Proof of $\mathcal{Z}_n$}:
Let $p\geq7$ and $n=p$. Then we note that $\mathcal{Z}_p$ has two simple poles occurring at $j=(p-3)/2$ and $j=(p-5)/2$. Hence, by Lemma \ref{pole} we have\[\mathcal{Z}_p(a,b,c) \equiv \frac{(2p+3)!!}{p}\left(\frac{a(p-3)^2+2b(p-3)+4c}{4(p+2)} + \frac{a(p-5)^2+2b(p-5)+4c}{4(p-2)}\right) \mod p\Z_p.\] The corresponding reduction is then \[\widetilde{\mathcal{Z}}_p(a,b,c) = (-3)\left(\frac{-16a+4b}{8}\right) = 6a-\frac{3}{2}b.\] This completes the proof of Lemma \ref{Ssum}.
\end{proof}

\begin{lemma}\label{uvxw} Let $p\geq7$ be a prime. For each integer $n\geq 1$ we define $\mathcal{\widehat{U}}_n, \mathcal{\widehat{V}}_n , \mathcal{\widehat{W}}_n$ and $\mathcal{\widehat{X}}_n$ in $\mathbb{Q}_p$ as
\begin{equation}\label{uhatt}\mathcal{\widehat{U}}_n(a,b) := (2n+3)!!\sum_{j=1}^{n-1}\frac{(2j+4)}{(2j+5)!!}\mathcal{U}_j(a,b)\end{equation}
\begin{equation}\label{vhatt}\mathcal{\widehat{V}}_n(a,b,c) :=(2n+3)!!\sum_{j=1}^{n-1}\frac{(2j+4)}{(2j+5)!!}\mathcal{V}_j(a,b,c)\end{equation}
\begin{equation}\label{whatt}\mathcal{\widehat{W}}_n :=(2n+3)!!\sum_{j=1}^{n-1}\frac{(2j+4)}{(2j+5)!!}\mathcal{W}_j,\end{equation}
and
\begin{equation}\label{xhatt}\mathcal{\widehat{X}}_n(a,b) := (2n+3)!!\sum_{j=1}^{n-1}\frac{(2j+4)}{(2j+5)!!}\mathcal{X}_j(a,b).\end{equation}
Then \[\widetilde{\mathcal{\widehat{U}}}_p(a,b) = a - \frac{19}{2}b, \quad \widetilde{\mathcal{\widehat{V}}}_p(a,b,c) = -\frac{45}{4}a + b +3c, \quad \widetilde{\mathcal{\widehat{W}}}_p = 2 \text{ and } \widetilde{\mathcal{\widehat{X}}}_p(a,b) = -3(a-b).\]
\end{lemma}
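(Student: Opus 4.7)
The strategy is to apply Lemma~\ref{doublefactinner} to each of $\widehat{\mathcal{U}}_n,\widehat{\mathcal{V}}_n,\widehat{\mathcal{W}}_n,\widehat{\mathcal{X}}_n$. This is possible because $(2j+4)\cdot(2j+2)!! = (2j+4)!!$, so the prefactor $(2j+4)/(2j+5)!!$ pairs with the $(2j+2)!!$ built into the definitions of $\mathcal{U}_j,\mathcal{V}_j,\mathcal{W}_j,\mathcal{X}_j$ to yield exactly the kernel $(2j+4)!!/(2j+5)!!$ demanded by Lemma~\ref{doublefactinner}. Each of the four objects then splits as $(2n+4)\mathcal{F}_n \;-\; (2n+3)!!\sum_{i=1}^{n-1} g_F(i)\,(2i+6)/(2i+5)!!$, where $\mathcal{F}_n$ ranges over $\mathcal{U}_n,\mathcal{V}_n,\mathcal{W}_n,\mathcal{X}_n$ and $g_F(i)$ denotes the summand of $\mathcal{F}_n$ stripped of its $1/(2i+4)!!$ factor (thus $g_U=\mathcal{S}_i(2i+3)$, $g_V=\mathcal{R}_i(ai^2+bi+c)$, $g_W=\mathcal{T}_i(2i+3)$, $g_X=(ai+b)(2i+1)!!$). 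At $n=p$, the first piece contributes $4\widetilde{\mathcal{F}}_p$, which is read off from Lemma~\ref{dmcoeff}.

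The core of the proof is therefore the reduction modulo $p$ of the four second sums. For $\widehat{\mathcal{U}}_p$ I would write $(2i+6)(2i+3)=6((i/3)+1)(2i+3)$, identifying the sum as $6\,\widehat{\mathcal{S}}_p(a,b,1/3)$, and apply Lemma~\ref{Ssum}. For $\widehat{\mathcal{V}}_p$ and $\widehat{\mathcal{W}}_p$ I would substitute the explicit formulas $\mathcal{R}_i=(2i+1)!!-(2i)!!$ and $\mathcal{T}_i=(2i+2)!!-2(2i+1)!!$ from Lemma~\ref{sumident}. The $(2i)!!$ and $(2i+2)!!$ components, once multiplied by a polynomial of degree at most three, are annihilated modulo $p$ by Lemma~\ref{polydoublefact} (which needs $p\geq 7$ to cover the cubic case, matching the hypothesis). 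What remains uses $(2i+1)!!/(2i+5)!! = 1/((2i+3)(2i+5))$ to turn into sums treated by $\mathcal{Z}_p$, after a short polynomial long division of $(ai^2+bi+c)(2i+6)$ by $(2i+3)(2i+5)$ in the $\widehat{\mathcal{V}}$ case, and after the splitting $(2i+6)/(2i+5) = 1 + 1/(2i+5)$ together with a single simple-pole calculation via Lemma~\ref{pole} in the $\widehat{\mathcal{W}}$ case. For $\widehat{\mathcal{X}}_p$, the ratio identity yields directly a $\mathcal{Z}$-sum with coefficients from $(ai+b)(2i+6) = 2ai^2+(6a+2b)i+6b$.

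The main obstacle is arithmetic bookkeeping rather than a conceptual difficulty: the polynomial long division for $\widehat{\mathcal{V}}$, the correct identification of the parameter $c=1/3$ for $\widehat{\mathcal{U}}$ (valid since $p\neq 3$), and the careful tracking of signs together with the reductions $(2p+4)\equiv 4\pmod p$ and $(2p+3)!!/p\equiv -3\pmod p$ (the latter quoted inside the proof of Lemma~\ref{Ssum}) and the minus sign between the two sums produced by Lemma~\ref{doublefactinner}. In particular, for $\widehat{\mathcal{V}}$ the division produces a polynomial quotient $(a/2)i+(b-a)/2$, whose contribution is annihilated by $\nu_p((2p+3)!!)\geq 1$, plus a linear remainder $(a/2-2b+2c)i+(6c+15a/2-15b/2)$ to be fed into $\widetilde{\mathcal{Z}}_p(0,\cdot,\cdot)$. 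Assembling these pieces with the reductions in Lemmas~\ref{dmcoeff} and~\ref{Ssum} and the vanishing criterion of Lemma~\ref{polydoublefact} should yield the four stated values $\widetilde{\widehat{\mathcal{U}}}_p,\widetilde{\widehat{\mathcal{V}}}_p,\widetilde{\widehat{\mathcal{W}}}_p,\widetilde{\widehat{\mathcal{X}}}_p$.
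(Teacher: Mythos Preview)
Your approach is correct and takes a genuinely different, cleaner route than the paper. Both start by applying Lemma~\ref{doublefactinner}, but the paper then expands $\mathcal{S}_j,\mathcal{R}_j,\mathcal{T}_j$ inside \emph{both} resulting sums and evaluates each by direct pole analysis; for $\widehat{\mathcal{U}}_p$ this leads through the harmonic-type sums $\mathcal{H}_\gamma,\mathcal{H}_\gamma'$ of Lemma~\ref{harmonic} and a nontrivial cancellation $\widetilde{\mathcal{H}}_\gamma+\widetilde{\mathcal{H}}_\gamma'=0$. You instead recognise the first sum as exactly $(2n+4)\mathcal{F}_n$ and read its reduction off Lemma~\ref{dmcoeff}, then identify the second sum with objects already computed in Lemma~\ref{Ssum}: $6\,\widehat{\mathcal{S}}_p(a,b,1/3)$ for $\widehat{\mathcal{U}}$, a $\mathcal{Z}_p$-value (after your long division and the Lemma~\ref{polydoublefact} kill of the $(2i)!!$ piece) for $\widehat{\mathcal{V}}$, a single simple pole for $\widehat{\mathcal{W}}$, and $\mathcal{Z}_p(2a,6a+2b,6b)$ for $\widehat{\mathcal{X}}$. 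I checked your arithmetic in each case and it reproduces the stated values; in particular $4\widetilde{\mathcal{U}}_p-6\cdot\tfrac14\bigl(a(17-\tfrac{41}{3})+b(-4+\tfrac{7}{3})\bigr)=a-\tfrac{19}{2}b$, and $4\widetilde{\mathcal{V}}_p+\tfrac32\bigl(\tfrac{a}{2}-2b+2c\bigr)=-\tfrac{45}{4}a+b+3c$. What your approach buys is modularity: Lemma~\ref{harmonic} is never invoked directly (its content is already baked into the $\widehat{\mathcal{S}}_p$ formula), and no new pole-hunting is needed beyond the single $i=(p-5)/2$ residue for $\widehat{\mathcal{W}}$. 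The paper's approach is more self-contained at this step but repeats work already done in Lemma~\ref{Ssum}. One small remark: your use of $c=1/3\in\mathbb{Z}_p$ in $\widehat{\mathcal{S}}_p$ is legitimate since Lemma~\ref{Ssum}'s formula is polynomial in $c$ and $p\geq 7$, but it is worth saying explicitly that the lemma extends from integer $c$ to $c\in\mathbb{Z}_p$ by this polynomial dependence.
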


\begin{proof}
\textbf{Proof of $\widetilde{\mathcal{\widehat{U}}}_p$}:
From the definition of $\mathcal{U}_n$ in (\ref{Udef}) and $\mathcal{S}_n$ in (\ref{Sdef}) and by applying Lemma \ref{doublefactinner} we obtain
\begin{align}\label{uhattstart}\mathcal{\widehat{U}}_n(a,b) &= (2n+3)!!\sum_{j=1}^{n-1}\frac{(2j+4)!!}{(2j+5)!!}\sum_{i=1}^{j-1}\frac{(2j+3)\mathcal{S}_j(a,b)}{(2j+4)!!}\notag\\
&=(2n+4)!!\sum_{i=1}^{n-1}\frac{(2i+3)\mathcal{S}_i(a,b)}{(2i+4)!!} - (2n+3)!!\sum_{i=1}^{n-1}\frac{(2i+3)\mathcal{S}_i(a,b)}{(2i+4)!!}\cdot\frac{(2i+6)!!}{(2i+5)!!}\notag\\
&=(2n+4)!!\sum_{i=1}^{n-1}\frac{(2i+3)!!}{(2i+4)!!}\sum_{k=1}^i \frac{ak+b}{2k+1} - (2n+3)!!\sum_{i=1}^{n-1}\frac{(2i+6)}{(2i+5)}\sum_{k=1}^i \frac{ak+b}{2k+1}.
\end{align}
Let $S_1$ and $S_2$ denote the two sums in (\ref{uhattstart}). Let $p\geq7$ and put $n=p$. Then we note that $\nu_p(S_1)\geq 1$ for all $i$ and $k$ except for $i\in[p-2,p-1]$ and $k=(p-1)/2$. In view of Lemma \ref{pole} this yields, 
\[S_1 \equiv \frac{(2p+4)!!}{p}\left(\frac{(2p-1)!!}{(2p)!!}\left(a\left(\frac{p-1}{2}\right)+b\right) + \frac{(2p+1)!!}{(2p+2)!!}\left(a\left(\frac{p-1}{2}\right)+b\right)\right) \mod p\mathbb{Z}_p.\] Using Lemma \ref{generalwilson} we obtain \[\widetilde{S}_1 = -16\left(\frac{1}{2}(-a/2+b) +\frac{1}{4}(-a/2+b)\right) = 6a-12b.\]

Concerning $S_2$, we put
\[\mathcal{Y}_1:=\frac{(2p+3)!!}{p}(p+1)\sum_{k=1}^{(p-5)/2} \frac{ak+b}{2k+1},\] and \[\mathcal{Y}_2 := \frac{(2p+3)!!}{p}\left(a\left(\frac{p-1}{2}\right)+b\right)\sum_{i=(p-1)/2}^{p-1} \frac{2i+6}{2i+5}.\] Then we note that $S_2 \equiv \mathcal{Y}_1 + \mathcal{Y}_2 \mod p\mathbb{Z}_p.$ Using (\ref{sum1done}) in the proof of Lemma \ref{Ssum} we have \[\sum_{k=1}^{(p-5)/2} \frac{ak+b}{2k+1} = \frac{a(p-3)}{4}-\left(\frac{a}{2}-b\right)\mathcal{H}_\gamma-\frac{a(p-3)+2b}{2p-4}.\] It follows that \begin{equation}\label{y1done22}\mathcal{Y}_1 = \frac{(2p+3)!!}{p}(p+1)\left( \frac{a(p-3)}{4}-\left(\frac{a}{2}-b\right)\mathcal{H}_\gamma-\frac{a(p-3)+2b}{2p-4}\right).\end{equation} By a switch of index in the sum in $\mathcal{Y}_2$ we obtain 
\begin{align}\label{summy}
\sum_{i=(p-1)/2}^{p-1} \frac{2i+6}{2i+5} &= \sum_{i=(p-1)/2}^{p-1} 1+\frac{1}{2i+5} \notag\\
&= \sum_{j=1}^{(p+1)/2}1+ \frac{1}{2j+p+2} \notag\\
&=\frac{p+1}{2} + \sum_{j=1}^{(p+1)/2}\frac{1}{2j+p+2}\notag\\
&=\frac{p+1}{2} -\frac{1}{p+2} + \sum_{j=1}^{(p+3)/2}\frac{1}{2j+p}.
\end{align}
By fixing the index in (\ref{summy}) to fit the conditions of Lemma \ref{harmonic} we obtain 
\begin{align}\sum_{i=(p-1)/2}^{p-1} \frac{2i+6}{2i+5} &=\frac{p+1}{2} -\frac{1}{p+2} + \sum_{j=1}^{(p+3)/2}\frac{1}{2j+p}\notag\\
&= \frac{p+1}{2} -\frac{1}{p+2}+\frac{1}{2p+3}+\frac{1}{2p+1}+\frac{1}{2p-1} + \sum_{j=1}^{(p-3)/2}\frac{1}{2j+p}.\notag
\end{align}
Put \[\sigma' := \frac{p+1}{2} -\frac{1}{p+2}+\frac{1}{2p+3}+\frac{1}{2p+1}+\frac{1}{2p-1}.\]
Then by definition we have \begin{equation}\label{y2done22}\mathcal{Y}_2 = \frac{(2p+3)!!}{p}\left(a\left(\frac{p-1}{2}\right)+b\right)\left(\sigma' +  \sum_{j=1}^{(p-3)/2}\frac{1}{2j+p}\right).\end{equation}
We note that $\widetilde{\sigma'} = 1/3$, and together with (\ref{y1done22}) and (\ref{y2done22}) we then have
\begin{align}
\widetilde{S}_2 &= \widetilde{\mathcal{Y}}_1 + \widetilde{\mathcal{Y}}_2\notag\\
&= (-3)\left(\frac{-3a}{4}-\left(\frac{a}{2}-b\right)\mathcal{\widetilde{H}}_\gamma + \frac{-3a+2b}{4}\right)\notag\\
&\hspace{5mm} + (-3)\left(-\frac{a}{2} +b\right)\left(\widetilde{\sigma'} + \widetilde{\mathcal{H}}_\gamma'\right)\notag\\
&=5a -\frac{5}{2}b + \left(\frac{3}{2}a-3b\right)\left(\mathcal{\widetilde{H}}_\gamma  + \mathcal{\widetilde{H}}_\gamma'\right)\notag\\
&=5a -\frac{5}{2}b. 
\end{align}
Thus, from (\ref{uhattstart}) we obtain \[\widetilde{\mathcal{\widehat{U}}}_p(a,b) = \widetilde{S}_1 - \widetilde{S}_2 = 6a-12b-5a+\frac{5}{2}b = a - \frac{19}{2}b.\]

\noindent
\textbf{Proof of $\widetilde{\mathcal{\widehat{V}}}_p$}:
We recall the definition of $\mathcal{V}_j(a,b,c)$ in (\ref{Vdef}) and then apply Lemma \ref{doublefactinner} to obtain
\begin{align}\label{vhattstart}\mathcal{\widehat{V}}_n(a,b,c) &= (2n+3)!!\sum_{j=1}^{n-1}\frac{(2j+4)!!}{(2j+5)!!}\sum_{i=1}^{j-1}\frac{\mathcal{R}_j(ai^2+bi+c)}{(2j+4)!!}\notag\\
&=(2n+4)!!\sum_{i=1}^{n-1}\frac{\mathcal{R}_i(ai^2+bi+c)}{(2i+4)!!} - (2n+3)!!\sum_{i=1}^{n-1}\frac{\mathcal{R}_i(ai^2+bi+c)}{(2i+4)!!}\cdot\frac{(2i+6)!!}{(2i+5)!!}.
\end{align}
We recall that $\mathcal{R}_n = (2n+1)!!-(2n)!!$. Insertion into (\ref{vhattstart}) yields
\begin{align*}\mathcal{\widehat{V}}_n(a,b,c) &= (2n+4)!!\sum_{i=1}^{n-1}\frac{((2i+1)!!-(2i)!!(ai^2+bi+c)}{(2i+4)!!} \\&\hspace{5mm}- (2n+3)!!\sum_{i=1}^{n-1}\frac{((2i+1)!!-(2i)!!)(ai^2+bi+c)}{(2i+4)!!}\cdot\frac{(2i+6)!!}{(2i+5)!!}\\
&=(2n+4)!!\sum_{i=1}^{n-1}\frac{(2i+1)!!(ai^2+bi+c)}{(2i+4)!!} -(2n+4)!!\sum_{i=1}^{n-1}\frac{ai^2+bi+c}{(2i+2)(2i+4)} \\
&\hspace{5mm}-(2n+3)!!\sum_{i=1}^{n-1}\frac{(ai^2+bi+c)(2i+6)}{(2i+3)(2i+5)}\\
&\hspace{5mm} + (2n+3)!!\sum_{i=1}^{n-1}\frac{(2i)!!(ai^2+bi+c)(2i+6)}{(2i+5)!!}
\end{align*}
We denote each individual sum in the above equation as $S_1, S_2, S_3$ and $S_4$ respectively. Let $p\geq7$ and put $n=p$. Then we note that $\nu_p\left(\frac{(2i+1)!!}{(2i+4)!!}\right)\geq0$, and thus the reduction of $S_1$ is 0. However $S_2$ and $S_3$ has simple poles occurring at $i = p-1$ and $i = p-2$ and $i = (p-3)/2$ and $i=(p-5)/2$ respectively. By Lemma \ref{pole} this yields \[S_2 \equiv \frac{(2p+4)!!}{4p}\left(\frac{a(p-2)^2 + b(p-2)+c}{p-1}+\frac{a(p-1)^2 + b(p-1)+c}{p+1}\right) \mod p\mathbb{Z}_p,\] and 
\[S_3 \equiv \frac{(2p+3)!!}{p}\left(\frac{\left(a\left(\frac{p-3}{2}\right)^2 + b\left(\frac{p-3}{2}\right) + c\right)(p+3)}{p+2} + \frac{\left(a\left(\frac{p-5}{2}\right)^2 + b\left(\frac{p-5}{2}\right) + c\right)(p+1)}{p-2}\right) \mod p\mathbb{Z}_p.\]
The corresponding reductions are \[\widetilde{S}_2 = -4(-3a+b) = 12a-4b,\] and
\[\widetilde{S}_3 = -3\left(\frac{1}{4}a-b+c\right) = -\frac{3}{4}a +3b-3c.\] Finally we have $\widetilde{S}_4 = 0$ by Lemma \ref{polydoublefact}. Hence, \[\widetilde{\mathcal{\widehat{V}}}_p(a,b,c) = -\left(\widetilde{S}_2 + \widetilde{S}_3\right) = -\frac{45}{4}a + b +3c.\]

\noindent

\textbf{Proof of $\widetilde{\mathcal{\widehat{W}}}_p$}:
From the definition of $\mathcal{W}_n$ in (\ref{Wdef}) and $\mathcal{T}_n$ in (\ref{Tdef}) and Lemma \ref{doublefactinner} we obtain
\begin{align}\label{whattstart}\mathcal{\widehat{W}}_n &= (2n+3)!!\sum_{j=1}^{n-1}\frac{(2j+4)!!}{(2j+5)!!}\sum_{i=1}^{j-1}\frac{(2i+3)\mathcal{T}_i}{(2i+4)!!}\notag\\
&=(2n+4)!!\sum_{i=1}^{n-1}\frac{(2i+3)\mathcal{T}_i}{(2i+4)!!} - (2n+3)!!\sum_{i=1}^{n-1}\frac{(2i+3)\mathcal{T}_i}{(2i+4)!!}\cdot\frac{(2i+6)!!}{(2i+5)!!}\notag\\
&=(2n+4)!!\sum_{i=1}^{n-1}\frac{(2i+3)((2i+2)!!-2(2i+1)!!)}{(2i+4)!!} \notag\\
&\hspace{5mm}-(2n+3)!!\sum_{i=1}^{n-1}\frac{(2i+3)((2i+2)!!-2(2i+1)!!)}{(2i+4)!!}\cdot\frac{(2i+6)!!}{(2i+5)!!}\notag\\
&=(2n+4)!!\sum_{i=1}^{n-1}\frac{2i+3}{2i+4} - 2(2n+4)!!\sum_{i=1}^{n-1}\frac{(2i+3)(2i+1)!!}{(2i+4)!!} \notag\\
&\hspace{5mm}-(2n+3)!!\sum_{i=1}^{n-1}(2i+3)(2i+6)(2i+2)\cdot\frac{(2i)!!}{(2i+5)!!}+2(2n+3)!!\sum_{i=1}^{n-1}\frac{2i+6}{2i+5}.
\end{align}
Let $p\geq7$ and put $n=p$, and then let each of the sums the last equality of (\ref{whattstart}) be denoted $S_1, S_2, S_3$ and $S_4$ respectively. First we note that $\widetilde{S}_3 = 0$ by Lemma \ref{polydoublefact}. Secondly, $S_2$ has no poles. Thus, $\widetilde{S}_2 = 0$. Hence, $\widetilde{\mathcal{\widehat{W}}}_p = \widetilde{S}_1 + \widetilde{S}_4$. Note that $S_1$ and $S_4$ have simple poles, occurring at $i=p-2$ and $i=(p-5)/2$ respectively. This yields
\[S_1 \equiv \frac{(2p+4)!!}{p}\cdot\frac{2p-1}{2} \mod p\mathbb{Z}_p,\] and \[S_4 \equiv \frac{2(2p+3)!!}{p}(p+1) \mod p\mathbb{Z}_p,\] from which we deduce that \begin{equation}\label{whattend}\widetilde{\mathcal{\widehat{W}}}_p = \widetilde{S}_1 + \widetilde{S}_4 = 8-6=2.\end{equation}

\noindent
\textbf{Proof of $\widetilde{\mathcal{\widehat{X}}}_p$}:
Using the definition of $\mathcal{X}_n$ in (\ref{Xdef}) and Lemma \ref{doublefactinner} we have
\begin{align}\label{xhattstart}\mathcal{\widehat{X}}_n(a,b) &= (2n+3)!!\sum_{j=1}^{n-1}\frac{(2j+4)!!}{(2j+5)!!}\sum_{i=1}^{j-1}\frac{(ai+b)(2i+1)!!}{(2i+4)!!}\notag\\
&=(2n+4)!!\sum_{i=1}^{n-1}\frac{(ai+b)(2i+1)!!}{(2i+4)!!} - (2n+3)!!\sum_{i=1}^{n-1}\frac{(ai+b)(2i+1)!!}{(2i+4)!!}\cdot\frac{(2i+6)!!}{(2i+5)!!}\notag\\
&=(2n+4)!!\sum_{i=1}^{n-1}\frac{(ai+b)(2i+1)!!}{(2i+4)!!} - (2n+3)!!\sum_{i=1}^{n-1}\frac{(ai+b)(2i+6)}{(2i+3)(2i+5)}.
\end{align}
Let $p\geq7$ and put $n=p$. The first sum in (\ref{xhattstart}) contains no poles and therefore has reduction zero. The second sum has simple poles at $i = (p-3)/2$ and $i=(p-5)/2$. Then by Lemma \ref{pole}
\[\mathcal{\widehat{X}}_p(a,b) \equiv -\frac{(2p+3)!!}{p}\left(\frac{\left(a\left(\frac{p-3}{2}\right) + b\right)(p+3)}{p+2} + \frac{\left(a\left(\frac{p-5}{2}\right) + b\right)(p+1)}{p-2}\right) \mod p\mathbb{Z}_p.\] This corresponds to the reduction 
\[\mathcal{\widetilde{\widehat{X}}}_p(a,b) = -3(a-b).\] 
\end{proof}

An important part of the proof of the Main Lemma is our ability to explicitly express the solutions of the difference equations, and thus the following lemma from \cite{Elaydi2005} is of importance.

\begin{lemma}\cite[\S 1.2]{Elaydi2005}\label{thmDIFF}
Let $k$ be a field. Furthermore let $f,g : \mathbb{Z}^+ \rightarrow k$, and $y_0 \in k$. Given a nonhomogeneous difference equation \[y_{n+1} = f(n)y_{n} + g(n), y_{n_0} = y_0\] where $n_0 \in [0,n]$. The general solution to the difference equation is given by 
\begin{equation}\label{elaydieq}
y_{n} = \left[\prod_{j=n_0}^{n-1}f(j)\right]y_0 + \sum_{r = n_0}^{n-1}\left[\prod_{j=r+1}^{n-1}f(j)\right]g(r)
.\end{equation}
\end{lemma}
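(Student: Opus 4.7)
The plan is to prove the formula by induction on $n$, starting from $n = n_0$, using the standard convention that an empty product equals $1$ and an empty sum equals $0$.

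For the base case $n = n_0$, the product $\prod_{j=n_0}^{n_0 - 1} f(j)$ and the sum $\sum_{r = n_0}^{n_0 - 1}$ are both empty, so the right-hand side of \eqref{elaydieq} collapses to $1 \cdot y_0 + 0 = y_0$, matching the initial condition.

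For the inductive step, I would assume the formula holds at level $n$, and then substitute this expression into the recurrence $y_{n+1} = f(n) y_n + g(n)$. Multiplying the inductive hypothesis by $f(n)$ absorbs $f(n)$ into the leading product, changing $\prod_{j = n_0}^{n-1} f(j)$ into $\prod_{j = n_0}^{n} f(j)$, and it extends each inner product $\prod_{j = r+1}^{n-1} f(j)$ in the sum to $\prod_{j = r+1}^{n} f(j)$. The only thing left is to absorb the trailing $g(n)$: this is precisely the term $r = n$ of the extended sum, since its inner product $\prod_{j = n+1}^{n} f(j)$ is empty and equals $1$. Thus the total right-hand side becomes
\[
\left[\prod_{j=n_0}^{n} f(j)\right] y_0 + \sum_{r = n_0}^{n}\left[\prod_{j=r+1}^{n} f(j)\right] g(r),
\]
which is \eqref{elaydieq} with $n$ replaced by $n+1$, completing the induction.

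There is no real obstacle; the only subtlety is bookkeeping with empty products and sums, together with recognizing that the extra term $g(n)$ produced by the recurrence is exactly the missing top index of the sum. Since this lemma is quoted from \cite{Elaydi2005}, one could alternatively just cite it and omit the argument, but the inductive verification above is short and self-contained.
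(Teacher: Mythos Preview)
Your inductive argument is correct and is the standard proof of this formula. Note, however, that the paper does not actually prove this lemma: it is simply quoted from \cite[\S 1.2]{Elaydi2005} without argument, so there is no ``paper's own proof'' to compare against. Your self-contained verification is entirely appropriate if one prefers not to rely on the citation.
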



\section{ Main Lemma and proof of Theorem \ref{pncoeff} }\label{secproof}
In this section we state and prove our Main Lemma, and give the proof of Theorem \ref{pncoeff}. For convenience we give the following definition. Given a ring $R$ and an element $a\in R$ we let $\langle a \rangle$ denote the ideal of $R$ generated by $a$.
\begin{namedlemma}[Main Lemma]\label{prop2ramif}
Let $p$ be an odd prime and let $k$ be a field of characteristic $p$. Let $f \in k[[\zeta]]$ be defined as \[f(\zeta) = \zeta\left(1 + \sum_{j=2}^{+\infty}a_j\zeta^j\right).\] 
Let $\alpha_1, \beta_1$ and $\gamma_1$ be defined as follows 
\begin{equation*}\label{alphabetagmma}
\alpha_1 := a_2^{p-2}\left(\frac{3}{2}a_2^3+a_3^2-a_2a_4\right), \quad
\beta_1 := \frac{a_3}{a_2}\alpha_1, \quad
\gamma_1 := -\left(\frac{3a_2}{2}-\frac{a_4}{a_2}\right)\alpha_1.
\end{equation*}
Then
\[f^p(\zeta) \equiv \zeta + \alpha_1 \zeta^{2p+3} +\beta_1 \zeta^{2p+4} +\gamma_1 \zeta^{2p+5} \mod \langle \zeta^{2p+6} \rangle.\]
\end{namedlemma}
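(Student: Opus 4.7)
My plan is to expand $f^n(\zeta) = \zeta + \sum_{j \geq 3} c_j(n)\zeta^j$ as a formal power series whose coefficients $c_j(n)$ are polynomial in $a_2, a_3, \ldots$ and in $n$ (viewed as an element of $\Z_p$ that will eventually be specialized to $p$), and derive a triangular system of first-order nonhomogeneous linear difference equations in $n$ for the $c_j(n)$. Concretely, from $f^{n+1}(\zeta)=f(f^n(\zeta)) = f^n(\zeta) + \sum_{k \geq 3} a_{k-1} (f^n(\zeta))^k$, extracting the coefficient of $\zeta^m$ on both sides gives
\[c_m(n+1) = c_m(n) + \sum_{k=3}^{m} a_{k-1}\,[\zeta^m](f^n(\zeta))^k,\]
and the right-hand side depends only on $c_i(n)$ with $i<m$, because $(f^n(\zeta))^k$ starts at $\zeta^k$ and each further monomial contributes at least $\zeta^3$. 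The initial condition is $c_j(0)=0$ for all $j$.

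I would then apply Lemma \ref{thmDIFF} to each equation in the triangular system in order of increasing $m$, obtaining $c_m(n) = \sum_{r=0}^{n-1} g_m(r)$, where $g_m(r)$ is an explicit polynomial in the previously determined $c_3(r),\ldots,c_{m-1}(r)$ and in the $a_i$. The low-order cases are computed directly, yielding $c_3(n)=na_2$, $c_4(n)=na_3$, $c_5(n)=3a_2^2\binom{n}{2}+na_4$, and so on. As the induction proceeds, the sums involved are precisely of the shape addressed by the technical lemmas of Section \ref{techres}: sums over $j \in [1,n-1]$ of rational functions in $j$ times ratios of double factorials (these come from repeatedly expanding $(f^n)^k$ and applying Lemma \ref{thmDIFF}).

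Setting $n=p$ and reducing modulo $p$ is where the technical lemmas do the real work. For each $m$ in the range $3\leq m \leq 2p+2$, I expect every contribution to $c_m(p)$ to vanish in $\F_p$: either a binomial-type factor such as $\binom{p}{k}$ with $0<k<p$ kills it, or the sum has a shape whose reduction is $0$ by one of Lemmas \ref{sumident}, \ref{sumidentfinitefield}, \ref{polydoublefact}, \ref{dmcoeff}, \ref{Ssum}, \ref{uvxw}. The nonvanishing contributions appear exactly at $m=2p+3$, $2p+4$, $2p+5$, and a careful bookkeeping of surviving terms should recover the values $\alpha_1, \beta_1, \gamma_1$.

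The principal obstacle is the combinatorial accounting at $m \in \{2p+3, 2p+4, 2p+5\}$: at these orders the recurrence receives contributions from every $(f^n)^k$ with $k\leq m$, from cross-products of $c_i(n)$'s whose product of indices lies in the relevant range, and one must match each surviving term with the specific sum whose reduction is provided in Section \ref{techres}. The power of Lemma \ref{polydoublefact}, which forces sums of the form $q(j)(2j)!!/(2j+5)!!$ to reduce to $0$ modulo $p$ whenever $\deg q < (p+3)/2$, is the mechanism that makes the otherwise-formidable high-degree contributions from $(f^n)^k$ collapse, leaving only a short explicit list of residues to sum up.
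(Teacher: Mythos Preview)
Your plan diverges from the paper's proof at the very first step, and the divergence is the source of a real gap. The paper does \emph{not} iterate $f$ directly. Instead it introduces $\Delta_1=f-\zeta$ and $\Delta_m=\Delta_{m-1}\circ f-\Delta_{m-1}$, so that $\Delta_p=f^p-\zeta$. The crucial feature of this device is that $\ord(\Delta_m)\ge 2m+1$: at each step the lowest surviving power jumps by~$2$, so to know $f^p$ modulo $\zeta^{2p+6}$ one only needs to track \emph{five} coefficients $\mathcal{A}_m,\ldots,\mathcal{E}_m$ of $\Delta_m$ as $m$ runs from $1$ to $p$. The resulting five coupled recurrences are $\mathcal{A}_{m+1}=x_1(2m+1)\mathcal{A}_m$, etc.; the factors $(2m+c)$ in the homogeneous parts are exactly what produces, via Lemma~\ref{thmDIFF}, the double factorials $(2n+1)!!$, $(2n+2)!!$, \ldots\ that all of the lemmas in \S\ref{techres} are built to handle.

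In your scheme $c_m(n+1)=c_m(n)+g_m(n)$ the homogeneous multiplier is~$1$, so Lemma~\ref{thmDIFF} yields bare telescoping sums $c_m(p)=\sum_{r=0}^{p-1}g_m(r)$ with $g_m$ a polynomial in~$r$; no ratio of double factorials is ever generated. Consequently your assertion that ``the sums involved are precisely of the shape addressed by the technical lemmas of Section~\ref{techres}'' is incorrect: Lemmas~\ref{sumident}--\ref{uvxw} simply do not speak to the sums your recursion produces. The $\binom{p}{k}$-vanishing you invoke does dispose of $c_m(p)$ for $m\le 2p$ (since $\deg_n c_m\le p-1$ there), but already at $m=2p+1$ one has $\deg_n c_m=p$, and extracting the surviving top binomial coefficients through a $p$-long chain of $\sim 2p$ coupled recurrences is a computation of size growing with $p$ for which you have offered no mechanism. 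The missing idea is precisely the $\Delta_m$ filtration, which collapses the problem to a system of bounded size and manufactures the double-factorial structure that \S\ref{techres} then evaluates.
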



\begin{proof}[Proof of Main Lemma]
Analogous to \cite{LindahlRiveraLetelier2013} for $m= 1$ we define the recurrence relation $\Delta_1(\zeta) := f(\zeta) - \zeta$ and for $m \geq 2$ \[\Delta_m(\zeta) := \Delta_{m-1}(f(\zeta)) - \Delta_{m-1}(\zeta).\] Note that $\Delta_p(\zeta) = f^p(\zeta) - \zeta$, and $\ord(\Delta_m) \geq 2m+1$.

For technical reasons we define $F_\ell := \mathbb{Q}_p[x_1,x_2,\dots,x_\ell]$, and $F_\infty := \mathbb{Q}_p[x_1,x_2,\dots]$. Moreover we consider the power series $\widehat{f} \in F_\infty[[\zeta]]$ defined as \[\widehat{f}(\zeta) := \zeta\left(1 + \sum_{j=1}^{+\infty}x_j\zeta^{j+1}\right).\] 
For $m=1$ we define the relation $\widehat{\Delta}_1(\zeta) := \widehat{f}(\zeta)-\zeta$ and for each integer $m\geq 2$ \[\widehat{\Delta}_m(\zeta) := \widehat{\Delta}_{m-1}(\widehat{f}(\zeta))-\widehat{\Delta}_{m-1}(\zeta).\]  
Defined in this way there is a clear relation between $f(\zeta)$ and $\widehat{f}(\zeta)$ and thus  between $\Delta_m$ and $\widehat{\Delta}_m$. In the last part of the proof we exploit this relation to find the coefficients of $f^p(\zeta) - \zeta$.

Let $x =x_\infty := (x_1,x_2,\dots)$, and let $\mathcal{A}_m(x), \mathcal{B}_m(x), \mathcal{C}_m(x), \mathcal{D}_m(x)$ and $\mathcal{E}_m(x)$ be  defined by 
\begin{align}
\label{Adiff}\mathcal{A}_{m+1}(x) &= x_1(2m+1)\mathcal{A}_m,\\
\label{Bdiff}\mathcal{B}_{m+1}(x) &= x_2(2m+1)\mathcal{A}_m+ x_1(2m+2)\mathcal{B}_m,\\
\label{Cdiff}\mathcal{C}_{m+1}(x) &= \left(x_1^2m(2m+1)+x_3(2m+1)\right)\mathcal{A}_m+ x_2(2m+2)\mathcal{B}_m + x_1(2m+3)\mathcal{C}_m,\\
\label{Ddiff}\mathcal{D}_{m+1}(x) &= (x_4(2m+1) + x_1x_2(2m+1)(2m))\mathcal{A}_m+ (x_3(2m+2)+x_1^2(m+1)(2m+1))\mathcal{B}_m \\&\hspace{5mm}+ x_2(2m+3)\mathcal{C}_m + x_1(2m+4)\mathcal{D}_m\notag\end{align}
and
\begin{align}\label{Ediff}\mathcal{E}_{m+1}(x) &= \left(x_5(2m+1) + x_1x_3(2m+1)(2m) + x_1^3\binom{2m+1}{3} + m(2m+1)x_2^2\right)\mathcal{A}_m\\\notag
&\hspace{5mm}+ (x_4(2m+2) + x_1x_2(2m+2)(2m+1))\mathcal{B}_m \notag\\
&\hspace{5mm}+ \left(x_3(2m+3)+x_1^2\binom{2m+3}{2}\right)\mathcal{C}_m + x_2(2m+4)\mathcal{D}_m + x_1(2m+5)\mathcal{E}_m,\notag
\end{align}
with initial conditions $(\mathcal{A}_1, \mathcal{B}_1, \mathcal{C}_1, \mathcal{D}_1,\mathcal{E}_1) = (x_1,x_2,x_3,x_4,x_5)$. 
We prove by induction that for $m\geq1$ we have \begin{equation}\label{deltapnstart}\widehat{\Delta}_m(\zeta) \equiv \mathcal{A}_m(x)\zeta^{2m+1}+ \mathcal{B}_m(x)\zeta^{2m+2}+ \mathcal{C}_m(x)\zeta^{2m+3}+ \mathcal{D}_m(x)\zeta^{2m+4}+ \mathcal{E}_m(x)\zeta^{2m+5}\mod \langle \zeta^{2m+6}\rangle.\end{equation} Throughout the rest of the proof let $\mathcal{A}_m:=\mathcal{A}_m(x) ,  \mathcal{B}_m:=\mathcal{B}_m(x),   \mathcal{C}_m:=\mathcal{C}_m(x), \mathcal{D}_m:=\mathcal{D}_m(x)$ and $\mathcal{E}_m:=\mathcal{E}_m(x)$ unless otherwise specified. 

For $m=1$ (\ref{deltapnstart}) holds by definition. Let $m\geq1$ be such that (\ref{deltapnstart}) holds. Then
\begin{align*}
\widehat{\Delta}_{m+1}(\zeta) &\equiv \widehat{\Delta}_{m}(\widehat{f}(\zeta)) - \widehat{\Delta}_m(\zeta)\\
&\equiv\mathcal{A}_m[(\zeta + \widehat{\Delta}_1(\zeta))^{2m+1}-\zeta^{2m+1}] + \mathcal{B}_m[(\zeta + \widehat{\Delta}_1(\zeta))^{2m+2}-\zeta^{2m+2}]\\
&\hspace{5mm}+\mathcal{C}_m[(\zeta + \widehat{\Delta}_1(\zeta))^{2m+3}-\zeta^{2m+3}] + \mathcal{D}_m[(\zeta + \widehat{\Delta}_1(\zeta))^{2m+4}-\zeta^{2m+4}]\\
&\hspace{5mm}+\mathcal{E}_m[(\zeta + \widehat{\Delta}_1(\zeta))^{2m+5}-\zeta^{2m+5}]\\
&\equiv\mathcal{A}_m\bigg[x_1(2m+1)\zeta^{2m+3} + x_2(2m+1)\zeta^{2m+4} + \left(x_3(2m+1)+x_1^2\binom{2m+1}{2}\right)\zeta^{2m+5} \\
&\hspace{5mm}+ (x_4(2m+1)+x_1x_2(2m+1)(2m))\zeta^{2m+6} \\
&\hspace{5mm}+ \left(x_5(2m+1) + x_1x_3(2m+1)(2m) + x_1^3\binom{2m+1}{3} + x_2^2\binom{2m+1}{2}\right)\zeta^{2m+7}\bigg]\\
&\hspace{5mm}+\mathcal{B}_m\bigg[x_1(2m+2)\zeta^{2m+4} + x_2(2m+2)\zeta^{2m+5} + \left(x_1^2\binom{2m+2}{2} + x_3(2m+2)\right)\zeta^{2m+6}\\
&\hspace{5mm} + (x_4(2m+2) + x_1x_2(2m+2)(2m+1))\zeta^{2m+7}\bigg]\\
&\hspace{5mm}+\mathcal{C}_m\bigg[x_1(2m+3)\zeta^{2m+5} + x_2(2m+3)\zeta^{2m+6} + \left(x_3(2m+3)+x_1^2\binom{2m+3}{2}\right)\zeta^{2m+7}\bigg]\\
&\hspace{5mm}+\mathcal{D}_m\bigg[x_1(2m+4)\zeta^{2m+6} + x_2(2m+4)\zeta^{2m+7}\bigg]+\mathcal{E}_m\bigg[x_1(2m+5)\zeta^{2m+7}\bigg]\\
&\equiv \mathcal{A}_{m+1}\zeta^{2m+3}+\mathcal{B}_{m+1}\zeta^{2m+4}+\mathcal{C}_{m+1}\zeta^{2m+5}+\mathcal{D}_{m+1}\zeta^{2m+6}+\mathcal{E}_{m+1}\zeta^{2m+7} \mod \langle \zeta^{2m+8} \rangle.
\end{align*}
This completes the proof of the induction step and proves (\ref{deltapnstart}), and we note that the difference equations only depend on the coefficients $x_1,x_2,x_3,x_4,x_5$ and not those for higher order terms, so $\mathcal{A}_m,\mathcal{B}_m,\mathcal{C}_m,\mathcal{D}_m,\mathcal{E}_m \in F_5$.

We divide the proof of the Lemma in three cases. First we consider the cases $p=3$ and $p=5$. In the third case we prove the Lemma for all $p\geq7$.

For the cases $p=3$ and $p=5$ we will explicitly compute the values of the coefficients in $\widehat{\Delta}_m$ for $m\in[1,3]$ and $m\in[1,5]$ respectively. For convenience  we redefine $F_5$ as $\mathbb{F}_p[x_1,\dots,x_5]$ for $p=3$ and $p=5$ respectively.\footnote{Note that $3\mid (2m+1)(2m)(2m-1)$ so $\binom{2m+1}{3}$ defines an element in $\F_3$.} Thereby we also utilize the fact that we are working over a field of characteristic 3 and 5 respectively. This fact will be used continuously without comment throughout the proof.

\partn{Case 1, $p=3$} From (\ref{Adiff}), (\ref{Bdiff}), (\ref{Cdiff}), (\ref{Ddiff}) and (\ref{Ediff}) we have $\mathcal{A}_2 = 0, \mathcal{A}_3 = 0$, and $\mathcal{B}_2 = x_1x_2, \mathcal{B}_3 = 0$. Furthermore, $\mathcal{C}_2 = x_2^2 + 2x_1x_3,  \mathcal{C}_3 = x_1(x_2^2+2x_1x_3) = x_1(x_2^2-x_1x_3)$. We also have $\mathcal{D}_2 = x_2x_3 + 2x_2x_3 = 0, \mathcal{D}_3 = x_2(x_2^2 -x_1x_3)$. Finally we obtain $\mathcal{E}_2 = x_1x_5+x_3(2x_3 + x_1^2)+x_2x_4,$ and  $\mathcal{E}_3 = x_3(x_2^2 - x_1x_3)$. Thus we have \[\widehat{\Delta}_3(\zeta) \equiv x_1(x_2^2-x_1x_3)\zeta^9 + x_2(x_2^2-x_1x_3)\zeta^{10} + x_3(x_2^2-x_1x_3)\zeta^{11} \mod \langle \zeta^{12} \rangle.\]
For each $i \in \{1,2,3\}$ we specialize each $x_i$ to $a_{i+1}$  and obtain \begin{align*}\widehat{\Delta}_3(\zeta) \equiv f^3(\zeta)-\zeta  &\equiv a_2(a_3^2-a_2a_4) \zeta^9 + a_3(a_3^2-a_2a_4) \zeta^{10} + a_4(a_3^2-a_2a_4) \zeta^{11} \\
&\equiv \alpha_1\zeta^9 +\beta_1\zeta^{10} + \gamma_1\zeta^{11}\mod \langle \zeta^{12} \rangle.\end{align*} This completes the proof of the Lemma for the case $p=3$.


\partn{Case 2, $p=5$} We continue in a similar procedure as for $p = 3$ using (\ref{Adiff}), (\ref{Bdiff}), (\ref{Cdiff}), (\ref{Ddiff}) and (\ref{Ediff}) to compute $\widehat{\Delta}_5$. We have
\[\mathcal{A}_2 = 3x_1^2,\quad \mathcal{A}_3= 0,\quad \mathcal{A}_4 = 0,\quad\mathcal{A}_5 = 0,\] and \[\mathcal{B}_2 = 2x_1x_2,\quad \mathcal{B}_3 = 2x_1^2x_2,\quad \mathcal{B}_4 = x_1^3x_2,\quad \mathcal{B}_5 = 0.\] Concerning $\mathcal{C}_m$ we have
\[\mathcal{C}_2 = (3x_1^2+3x_3)\mathcal{A}_1 + 4x_2\mathcal{B}_1 = 3x_1^3 + 4x_2^2 + 3x_1x_3,\quad \mathcal{C}_3 = x_2\mathcal{B}_2 + 2x_1\mathcal{C}_2 = x_1^4+x_1^2x_3.\] Using $\mathcal{A}_3 = \mathcal{A}_4 = 0$ we obtain \[ \mathcal{C}_4 = 8x_2\mathcal{B}_3 + 9x_1\mathcal{C}_3 = x_1^2x_2^2+4x_1(x_1^4+x_1^2x_3),\quad \mathcal{C}_5 = 10x_2\mathcal{B}_4 + 11x_1\mathcal{C}_4 = x_1^3(4x_1^3 + x_2^2+4x_1x_3).\]
Continuing for $\mathcal{D}_m$ we have \[\mathcal{D}_2 = (3x_4+6x_1x_2)x_1 + (4x_3+6x_1^2)x_2 + 5(\cdots)\mathcal{C}_1 + 6x_1x_4 = 4x_1x_4 +2x_1^2x_2+4x_2x_3.\] 
Again using $\mathcal{A}_3 = 0$ we obtain \[\mathcal{D}_4 = (8x_3+28x_1^2)\mathcal{B}_3 + 9x_2\mathcal{C}_3 + 10(\cdots)\mathcal{D}_3 = 2x_1^2x_2(8x_3+28x_1^2) + 9x_2(x_1^4+x_1^2x_3) = 0.\] Finally we have \[\mathcal{D}_5 = 5(\cdots)\mathcal{B}_4 + 11x_2\mathcal{C}_4 = x_2(x_1^2x_2^2+4x_1(x_1^4+x_1^2x_3)) = x_1^2x_2(4x_1^3+x_2^2+4x_1x_3).\]
Consequently, for $\mathcal{E}_m$ we have
\[\mathcal{E}_2 = (3x_5+6x_1x_3+x_1^3+3x_2)x_1+(4x_4+12x_1x_2)x_2 + 5(\cdots)x_3 + 6x_2x_4 + 7x_1x_5 = x_1^4+x_1^2x_3,\] and 
\begin{align*} \mathcal{E}_3 &=5(\cdots)\mathcal{A}_2 + 6x_2\mathcal{B}_2 + (7x_3 + 21x_1^2)\mathcal{C}_2 + 8x_2\mathcal{D}_2 + 9x_1\mathcal{E}_2\\
&=12x_1x_2x_4 + (2x_3 + x_1^2)(3x_1^3+4x_2^2+3x_1x_3) \\&\hspace{5mm}+ 3x_2(4x_1x_4+2x_1^2x_2+4x_2x_3) + 4x_1(x_1^4+x_1^2x_3)\\
&=4x_1x_2x_4 + 3x_1^3x_3+2x_1^5+x_1x_3^2.
\end{align*}
For $\mathcal{E}_4$ we have
\begin{align*}
\mathcal{E}_4 &= (8x_4+56x_1x_2)\mathcal{B}_3 + (9x_3+36x_1^2)\mathcal{C}_3 + 10(\cdots)\mathcal{D}_3 + 11x_1\mathcal{E}_3\\
&=(3x_4+x_1x_2)(2x_1^2x_2)+(4x_3+x_1^2)(x_1^4+x_1^2x_3) + x_1(4x_1x_2x_4 + 3x_1^3x_3+2x_1^5+x_1x_3^2)\\
&=2x_1^3x_2^2 + 3x_1^4x_3+3x_1^6.
\end{align*}
Finally $\mathcal{E}_5$
\begin{align*}
\mathcal{E}_5 &= 11x_3\mathcal{C}_4 + 13x_1\mathcal{E}_4\\
&=x_3(x_1^2x_2^2+4x_1(x_1^4+x_1^2x_3)) + 3x_1(2x_1^3x_2^2 + 3x_1^4x_3+3x_1^6)\\
&=4x_1^7+x_1^4x_2^2+3x_1^5x_3+x_1^2x_2^2x_3+4x_1^3x_3^2\\
&=x_1^3\left(x_1+\frac{x_3}{x_1}\right)(4x_1^3+x_2^2+4x_1x_3).
\end{align*}
Note that $-3/2\equiv 1 \pmod{5}$ and thus $(4x_1^3+x_2^2+4x_1x_3) = \left(\frac{3}{2}x_1^3+x_2^2 - x_1x_3\right)$. The proof is completed by specializing for each $i \in \{1,2,3\}$ $x_i$ to $a_{i+1}$, \[\widetilde{\mathcal{C}}_5(a_2,a_3,a_4) = \alpha_1,\]\[ \widetilde{\mathcal{D}}_5(a_2,a_3,a_4) = \frac{a_3}{a_2}\alpha_1 = \beta_1,\] and \[ \widetilde{\mathcal{E}}_5(a_2,a_3,a_4) = \left(a_2 + \frac{a_4}{a_2}\right)\alpha_1 = -\left(\frac{3a_2}{2} - \frac{a_4}{a_2}\right)\alpha_1 =  \gamma_1.\]
This completes the proof of the Lemma for the case $p=5$.

\partn{Case 3, $p\geq7$} 
Note for $p\geq7$ all lemmas stated in \textsection\ref{techres} apply. 

The equations (\ref{Adiff}), (\ref{Bdiff}) and (\ref{Cdiff}) were explicitly solved in \cite[page 267ff]{Fransson2017}:
\begin{equation}\label{Asolve}\mathcal{A}_m = x_1^m(2m-1)!!,\end{equation}
\begin{equation}\label{Bsolve}
\mathcal{B}_{m} = x_1^{m-1}x_2\mathcal{R}_m,
 \end{equation}
and
\begin{equation} \label{Csolve}\mathcal{C}_m = x_1^{m+1}\mathcal{S}_m(1,-1)  +x_1^{m-1}x_3\mathcal{S}_m(0,1)+ x_1^{m-2}x_2^2\Big(\mathcal{S}_m(2,0) - \mathcal{T}_m\Big).
\end{equation} 

The remaining part of the proof is to explicitly calculate $\mathcal{D}_m$ and $\mathcal{E}_m$ and compute the corresponding reductions $\widetilde{\mathcal{D}}_p$ and $\widetilde{\mathcal{E}}_p$.
Insertion of (\ref{Asolve}), (\ref{Bsolve}) and (\ref{Csolve}) into (\ref{Ddiff}) yields
\begin{align*}
\mathcal{D}_{m+1} &= (x_4 + x_1x_2(2m))x_1^{m}(2m+1)!! \\&+ x_1^{m-1}x_2\mathcal{R}_m(x_3(2m+2)+x_1^2(m+1)(2m+1))  \\&+x_2(2m+3)(x_1^{m+1}\mathcal{S}_m(1,-1) +x_1x_3\mathcal{S}_m(0,1) + x_1^{m-2}x_2^2(\mathcal{S}_m(2,0)-\mathcal{T}_m)) \\&+ x_1(2m+4)\mathcal{D}_m.
\end{align*}
Put $\phi_m := x_4(2m+2)!!$. Then $\widetilde{\phi}_p =0$. 
Lemma \ref{thmDIFF} yields 
\begin{align*}
\mathcal{D}_{m} &=\phi_m +  x_1^{m-3}(2m+2)!!\sum_{j=1}^{m-1}\frac{1}{(2j+4)!!}\bigg((x_1^2x_4 + x_1^{3}x_2(2j))(2j+1)!! \\&+ (x_1^2(j+1)(2j+1) + x_3(2j+2))x_1x_2\mathcal{R}_j \\
& +x_2(2j+3)(x_1^{3}\mathcal{S}_j(1,-1) +x_1x_3\mathcal{S}_j(0,1)+ x_2^2(\mathcal{S}_j(2,0)-\mathcal{T}_j))\bigg).
\end{align*}
In view of the definitions in Lemma \ref{dmcoeff} we have
\begin{align}\label{fourtheq}\mathcal{D}_{m} &=  \phi_m + x_1^{m}x_2(\mathcal{X}_m(2,0)+\mathcal{U}_m(1,-1) + \mathcal{V}_m(2,3,1)) 
\\&\hspace{5mm}+ x_1^{m-3}x_2^3(\mathcal{U}_m(2,0)-\mathcal{W}_m)\notag\\
&\hspace{5mm}+x_1^{m-1}x_4\mathcal{X}_m(1,0)\notag\\
&\hspace{5mm}+x_1^{m-2}x_2x_3(\mathcal{U}_m(0,1)+\mathcal{V}_m(0,2,2))\notag
\end{align}
Inserting the explicit solution for $\mathcal{D}_m$ in (\ref{Ediff}) yields
\begin{align*}
\mathcal{E}_{m+1} &= x_1(2m+5)\mathcal{E}_m\\
&\hspace{1mm}+x_1^m(2m-1)!!\bigg(x_5(2m+1) + x_1x_3(2m+1)(2m) + x_1^3\binom{2m+1}{3}+ x_2^2\binom{2m+1}{2}\bigg)\\
&\hspace{1mm}+x_1^{m-1}x_2\mathcal{R}_m(x_1x_2(2m+2)(2m+1) + x_4(2m+2))\\
&\hspace{1mm}+x_1^{m-2}(2m+3)(x_3 + x_1^2(m+1))(x_1^3\mathcal{S}_m(1,-1)+x_1x_3\mathcal{S}_m(0,1)+x_2^2(\mathcal{S}_m(2,0)-\mathcal{T}_m))\\
&\hspace{1mm}+x_2(2m+4)\bigg(x_1^{m}x_2(\mathcal{X}_m(2,0)+\mathcal{U}_m(1,-1) + \mathcal{V}_m(2,3,1)) 
\\
&\hspace{1mm}+ x_1^{m-3}x_2^3(\mathcal{U}_m(2,0)-\mathcal{W}_m)+x_1^{m-1}x_4\mathcal{X}_m(1,0)\notag\\
&\hspace{1mm}+x_1^{m-2}x_2x_3(\mathcal{U}_m(0,1)+\mathcal{V}_m(0,2,2))\bigg)\notag.
\end{align*}
Put $\phi'_m := x_5(2m+3)!!$. Then $\widetilde{\phi}'_p = 0$. Lemma \ref{thmDIFF} yields 
\begin{align}
\mathcal{E}_m &=\phi'_m + x_1^{m+2}(2m+3)!!\sum_{j=1}^{m-1}\frac{j(2j-1)}{3(2j+3)(2j+5)} + \frac{(j+1)(2j+3)}{(2j+5)!!}\mathcal{S}_j(1,-1)\\
&\hspace{3mm}+x_1^{m-1}x_2^2(2m+3)!!\sum_{j=1}^{m-1}\Bigg(\frac{j(2j+1)!!}{(2j+5)!!} + \frac{(j+1)(2j+3)}{(2j+5)!!}(\mathcal{S}_j(2,0)-\mathcal{T}_j)\notag\\
&\hspace{6mm}+\frac{(j+1)(2j+1)}{(2j+5)!!}\mathcal{R}_j +\frac{(2j+4)}{(2j+5)!!}(\mathcal{X}_j(2,0)+\mathcal{V}_j(2,3,1)+\mathcal{U}_j(1,-1))\Bigg)\notag\\
&\hspace{3mm}+x_1^{m-4}x_2^4(2m+3)!!\sum_{j=1}^{m-1}\frac{(2j+4)}{(2j+5)!!}(\mathcal{U}_j(2,0) - \mathcal{W}_j)\notag\\
&\hspace{3mm}+x_1^{m-3}x_2^2x_3(2m+3)!!\sum_{j=1}^{m-1}\frac{(2j+4)}{(2j+5)!!}(\mathcal{U}_j(0,1) + \mathcal{V}_j(0,2,2)) + \frac{2j+3}{(2j+5)!!}(\mathcal{S}_j(2,0)-\mathcal{T}_j)\notag\\
&\hspace{3mm}+x_1^{m}x_3(2m+3)!!\sum_{j=1}^{m-1}\frac{(j+1)(2j+3)}{(2j+5)!!}\mathcal{S}_j(0,1) + \frac{2j}{(2j+3)(2j+5)} + \frac{2j+3}{(2j+5)!!}\mathcal{S}_j(1,-1)\notag\\
&\hspace{3mm}+x_1^{m-2}x_2x_4(2m+3)!!\sum_{j=1}^{m-1}\frac{(2j+4)}{(2j+5)!!}\mathcal{X}_j(1,0) + \frac{2j+2}{(2j+5)!!}\mathcal{R}_j\notag\\
&\hspace{3mm}+x_1^{m-2}x_3^2(2m+3)!!\sum_{j=1}^{m-1}\frac{2j+3}{(2j+5)!!}\mathcal{S}_j(0,1)\notag\\
&\hspace{3mm}+x_1^{m-1}x_5(2m+3)!!\sum_{j=1}^{m-1}\frac{1}{(2j+3)(2j+5)}.\notag
\end{align}
In view of the definitions in Lemma \ref{Ssum} and Lemma \ref{uvxw} we have
\begin{align}\label{Esolve}
\mathcal{E}_m &=\phi_m' + x_1^{m+2}\left(\mathcal{Z}_m(2/3,-1/3,0)+ \widehat{\mathcal{S}}_m(1,-1,1)\right)\\
&\hspace{3mm}+x_1^{m-1}x_2^2\left(\mathcal{Z}_m(0,1,0) + \widehat{\mathcal{S}}_m(2,0,1) - \widehat{\mathcal{T}}_m(1) + \frac{1}{2}\widehat{\mathcal{R}}_m(2) + \mathcal{\widehat{X}}_m(2,0) + \mathcal{\widehat{V}}_m(2,3,1) + \mathcal{\widehat{U}}_m(1,-1)\right)\notag\\
&\hspace{3mm}+x_1^{m-4}x_2^4\left(\mathcal{\widehat{U}}_m(2,0) - \mathcal{\widehat{W}}_m\right)+x_1^{m-3}x_2^2x_3(\widehat{\mathcal{U}}_m(0,1) + \widehat{\mathcal{V}}_m(0,2,2) + \widehat{\mathcal{S}}_m(2,0,0) - \widehat{\mathcal{T}}_m(0))\notag\\
&\hspace{3mm}+x_1^{m}x_3(\widehat{\mathcal{S}}_m(0,1,1) + \widehat{\mathcal{Z}}_m(0,2,0) + \widehat{\mathcal{S}}_m(1,-1,0))+x_1^{m-2}x_2x_4(\widehat{\mathcal{X}}_m(0,1) + \widehat{\mathcal{R}}_m(0)) \notag\\
&\hspace{3mm}+x_1^{m-2}x_3^2\widehat{\mathcal{S}}_m(0,1,0)+ x_1^{m-1}x_5\mathcal{Z}(0,0,1)
\notag.
\end{align}
Recall that \[\widehat{f}^p(\zeta) \equiv \mathcal{A}_p\zeta^{2p+1} + \mathcal{B}_p\zeta^{2p+2} + \mathcal{C}_p\zeta^{2p+3} + \mathcal{D}_p\zeta^{2p+4} + \mathcal{E}_p\zeta^{2p+5} \mod \langle \zeta^{2p+6} \rangle.\] It follows from (\ref{Asolve}), (\ref{Bsolve}) and Lemma \ref{sumident} that \[\widetilde{\mathcal{A}}_p = \widetilde{\mathcal{B}}_p = 0,\] and from (\ref{Csolve}), Lemma \ref{sumident} and Lemma \ref{sumidentfinitefield} we obtain \[\widetilde{\mathcal{C}}_p = a_1^{p-2}(3/2a_1^3+a_2^2-a_2a_4) = \alpha_1.\] In view of Lemma \ref{dmcoeff} we have \[\widetilde{\mathcal{V}}_p(2,3,1) = -3, \quad \widetilde{\mathcal{U}}_p(1,-1)= 9/2,\quad \widetilde{\mathcal{X}}_p(2,0)=0, \quad \widetilde{\mathcal{U}}_p(2,0)=3, \quad \widetilde{\mathcal{W}}_p =2,\]\[\widetilde{\mathcal{U}}_p(0,1)=-3,\quad\widetilde{\mathcal{V}}_p(0,2,2)=2 \quad \text{ and }\quad\widetilde{\mathcal{X}}_p(1,0)=0.\]  For each $i \geq 1$ we specialize $x_i$ to $a_{i+1}$, and by (\ref{fourtheq}) we obtain 
\begin{align}\label{DsolveP}\widetilde{\mathcal{D}}_{p}(a_2,a_3) &=  (9/2-3)a_2^pa_3 + (3-2)a_2^{p-3}a_3^3\\
&\hspace{3mm}+(-3+2)a_2^{p-2}a_3a_4\notag\\
&= a_2^{p-3}a_3\left(3/2a_2^3 + a_3^2-a_2a_4\right)\notag\\
&= \frac{a_3}{a_2}\widetilde{\mathcal{C}}_p = \frac{a_3}{a_2}\alpha_1 = \beta_1.
\end{align}
Concerning $\mathcal{E}_p$ we have by Lemma \ref{Ssum} and Lemma \ref{uvxw}
\[\widetilde{\mathcal{Z}}_p(2/3,-1/3,0) = \frac{9}{2}, \quad \widetilde{\mathcal{Z}}_p(0,1,0) = -\frac{3}{2},\] \[\widetilde{\widehat{\mathcal{S}}}_p(1,-1,1) = -\frac{27}{4}, \quad \widetilde{\widehat{\mathcal{T}}}_p(1) = -9, \quad \widetilde{\widehat{\mathcal{S}}}_p(2,0,1) = -12,  \]
\[ \widetilde{\widehat{\mathcal{R}}}_p(2) = 15, \quad \widetilde{\mathcal{\widehat{X}}}_p(2,0)= -6, \quad \widetilde{\mathcal{\widehat{V}}}_p(2,3,1) = -\frac{33}{2}, \] \[\widetilde{\mathcal{\widehat{U}}}_p(1,-1) = \frac{21}{2}, \quad \widetilde{\mathcal{\widehat{U}}}_p(2,0) = 2,\quad \widetilde{\mathcal{\widehat{W}}}_p = 2,\] \[\widetilde{\widehat{\mathcal{U}}}_p(0,1)=-\frac{19}{2},\quad \widetilde{\widehat{\mathcal{V}}}_p(0,2,2)=8,\quad \widetilde{\widehat{\mathcal{S}}}_p(0,1,1)=\frac{3}{4},\quad \widetilde{\widehat{\mathcal{Z}}}_p(0,2,0)=-3\] \[\widetilde{\widehat{\mathcal{S}}}_p(1,-1,0)=\frac{21}{4},\quad \widetilde{\widehat{\mathcal{S}}}_p(0,1,0)=-1,\quad \widetilde{\widehat{\mathcal{S}}}_p(2,0,0)=\frac{17}{2},\quad \widetilde{\widehat{\mathcal{T}}}_p(0)=6\]and \[\widetilde{\widehat{\mathcal{X}}}_p(0,1)=3,\quad \widetilde{\widehat{\mathcal{R}}}_p(0)=-3,\quad \widetilde{\widehat{\mathcal{Z}}}_p(0,0,1)=0.\]respectively. Hence, from (\ref{Esolve}) we obtain
\begin{align}\label{EsolveP}
\widetilde{\mathcal{E}}_p(a_2,a_3) &=\left(\frac{9}{2}-\frac{27}{4}\right)a_2^{p+2} + \left(-\frac{3}{2}-12+9+\frac{15}{2}-6-\frac{33}{2}+\frac{21}{2}\right)a_2^{p-1}a_3^2 \notag\\
&\hspace{5mm}+ (2-2)a_2^{p-4}a_3^4 + \left(-\frac{19}{2} + 8 + \frac{17}{2}-6\right)a_2^{p-3}a_3^2a_4 \notag\\
&\hspace{5mm}+ \left(\frac{3}{4}-3+\frac{21}{4}\right)a_2^{p}a_4 -a_2^{p-2}a_4^2+ (3-3)a_2^{p-2}a_3a_5\notag\\
&=-9/4a_2^{p+2}  -3/2a_2^{p-1}a_3^2+a_2^{p-3}a_3^2a_4 +3a_2^pa_4-a_2^{p-2}a_4^2\notag\\
&= -\left(\frac{3a_2}{2}-\frac{a_4}{a_2}\right)\widetilde{\mathcal{C}}_p = -\left(\frac{3a_2}{2}-\frac{a_4}{a_2}\right)\alpha_1 = \gamma_1.
\end{align}
This completes the proof of the Main Lemma.
\end{proof}

\begin{proof}[Proof of Theorem \ref{pncoeff}]
As for the case of the proof of the Main Lemma we divide this proof into three cases, thus treating the special cases $p=3$ and $p=5$ separately. 

By the Main Lemma the theorem is valid for $n=1$. Assume that the theorem is valid for some integer $n \geq 1$. Let $h(\zeta) = f^{p^n}(\zeta)$. Then, \[h(\zeta)-\zeta = \alpha_n\zeta^{2d+1} + \beta_n\zeta^{2d+2} + \gamma_n\zeta^{2d+3} + \langle \zeta^{2d+4} \rangle.\] For each integer $m \geq 1$ define the power series $\Delta_m(\zeta)$ inductively by $\Delta_1(\zeta):=h(\zeta)-\zeta$, and for $m\geq 2$ by \[\Delta_m(\zeta) := \Delta_{m-1}(h(\zeta))-\Delta_{m-1}(\zeta).\] Note that $\Delta_p(\zeta) = h^p(\zeta)-\zeta = f^{p^{n+1}}(\zeta)-\zeta$. 

As in the proof of the Main Lemma we define $F_\ell := \mathbb{Q}_p[x_1,x_2,\dots,x_\ell]$, and $F_\infty := \mathbb{Q}_p[x_1,x_2,\dots]$. We recall that $d = \frac{p^n-1}{p-1}$ and put \begin{equation}\label{alphabetadef}\widehat{\varphi} = \frac{3}{2}x_1^3+x_2^2-x_1x_3,\quad \widehat{\alpha}_n=\widehat{\alpha}_n(x) :=x_1^{p^n-2d}\widehat{\varphi}^d,\end{equation} and \begin{equation}\label{gammadef}\quad \widehat{\beta}_n= \widehat{\beta}_n(x) := \frac{x_2}{x_1}\widehat{\alpha}_n(x), \quad \widehat{\gamma}_n =\widehat{\gamma}_n(x) := -\left(\frac{3x_1}{2}-{x_3}{x_1}\right)\widehat{\alpha}_n.\end{equation} For future reference we note that \begin{equation}\label{betaalphagamma} \widehat{\beta}_n^2 - \widehat{\alpha}_n\widehat{\gamma}_n = x_1^{-2}\widehat{\alpha}_n^2\widehat{\varphi}.\end{equation} Moreover we consider the power series $\widehat{h} \in F_\infty[[\zeta]]$ defined as \[\widehat{h}(\zeta)-\zeta := \widehat{\alpha}_n\zeta^{2d+1} + \widehat{\beta}_n\zeta^{2d+2} + \widehat{\gamma}_n\zeta^{2d+3} + \langle \zeta^{2d+4} \rangle.\] 
For $m=1$ we define the relation $\widehat{\Delta}_1(\zeta) := \widehat{h}(\zeta)-\zeta$ and for each integer $m\geq 2$ \[\widehat{\Delta}_m(\zeta) := \widehat{\Delta}_{m-1}(\widehat{h}(\zeta))-\widehat{\Delta}_{m-1}(\zeta).\]  
We will prove that the first significant terms of $\widetilde{\Delta}_p(\zeta)$ are $\alpha_{n+1}, \beta_{n+1}$ and $\gamma_{n+1}$.

As for the proof of the Main Lemma in the cases of $p=3$ and $p=5$ we redefine the ground field of $F_\ell$ to be $\mathbb{F}_p$ for $p=3$ and $p=5$ respectively, and utilize, without comment, that the field characteristic is 3 and 5.

\partn{Case 1, $p=3$} Let $ \widehat{\delta}_n$ and $ \widehat{\varepsilon}_n$ denote the coefficients of the terms of order $2d+4$ and $2d+5$ in $\widehat{h}$ respectively. By definition $d\equiv 1 \pmod{p}$. 
We obtain
\begin{align*}
\widehat{\Delta}_2(\zeta) &\equiv \widehat{\Delta}_1(\widehat{h}(\zeta)) - \widehat{\Delta}_1(\zeta)\\
&\equiv \widehat{\alpha}_n[(\zeta+\widehat{\Delta}_1)^{2d+1}-\zeta^{2d+1}]+  \widehat{\beta}_n[(\zeta+\widehat{\Delta}_1)^{2d+2}-\zeta^{2d+2}]\\
&\hspace{5mm}+ \widehat{\gamma}_n[(\zeta+\widehat{\Delta}_1)^{2d+3}-\zeta^{2d+3}]+ \widehat{\delta}_n[(\zeta+\widehat{\Delta}_1)^{2d+4}-\zeta^{2d+4}]\\
&\hspace{5mm}+ \widehat{\varepsilon}_n[(\zeta+\widehat{\Delta}_1)^{2d+5}-\zeta^{2d+5}]\\
&\equiv3 \widehat{\alpha}_n(\dots) + 4 \widehat{\alpha}_n \widehat{\beta}_n\zeta^{4d+2} + 4 \widehat{\beta}_n^2\zeta^{4d+3} + 4 \widehat{\beta}_n \widehat{\gamma}_n\zeta^{4d+4} + 4 \widehat{\beta}_n \widehat{\delta}_n\zeta^{4d+5}\\
&\hspace{5mm}+5 \widehat{\alpha}_n \widehat{\gamma}_n\zeta^{4d+3} + 5 \widehat{\beta}_n \widehat{\gamma}_n\zeta^{4d+4} + 5 \widehat{\gamma}_n^2\zeta^{4d+5} + 6 \widehat{\delta}_n(\dots) + 7 \widehat{\alpha}_n \widehat{\varepsilon}_n\zeta^{4d+5}\\
&\equiv \widehat{\alpha}_n \widehat{\beta}_n\zeta^{4d+2} + ( \widehat{\beta}_n^2 + 2 \widehat{\alpha}_n \widehat{\gamma}_n)\zeta^{4d+3} + ( \widehat{\alpha}_n \widehat{\varepsilon}_n + 2 \widehat{\gamma}_n^2 + \widehat{\beta}_n \widehat{\delta}_n)\zeta^{4d+5}\mod \langle \zeta^{4d+6} \rangle.
\end{align*}
Also,
\begin{align*}
\widehat{\Delta}_3(\zeta) &\equiv \widehat{\Delta}_2(\widehat{h}(\zeta)) - \widehat{\Delta}_2(\zeta)\\
&\equiv \widehat{\alpha}_n \widehat{\beta}_n[(\zeta+\widehat{\Delta}_1)^{4d+2}-\zeta^{4d+2}]+ ( \widehat{\beta}_n^2+2 \widehat{\alpha}_n \widehat{\gamma}_n)[(\zeta+\widehat{\Delta}_1)^{4d+3}-\zeta^{4d+3}]\\
&\hspace{5mm}+( \widehat{\alpha}_n \widehat{\varepsilon}_n + 2 \widehat{\gamma}_n^2 +  \widehat{\beta}_n \widehat{\delta}_n)[(\zeta+\widehat{\Delta}_1)^{4d+5}-\zeta^{4d+5}]\\
&6 \widehat{\alpha}_n \widehat{\beta}_n(\dots) + 7 \widehat{\alpha}_n( \widehat{\beta}_n^2+2 \widehat{\alpha}_n \widehat{\gamma}_n)\zeta^{6d+3} + 7 \widehat{\beta}_n( \widehat{\beta}_n^2+2 \widehat{\alpha}_n \widehat{\gamma}_n)\zeta^{6d+4}\\
&\hspace{5mm}+7 \widehat{\gamma}_n( \widehat{\beta}_n^2+2 \widehat{\alpha}_n \widehat{\gamma}_n)\zeta^{6d+5} + 9( \widehat{\alpha}_n \widehat{\varepsilon}_n + 2 \widehat{\gamma}_n^2 +  \widehat{\beta}_n \widehat{\delta}_n)(\dots)\\
&\equiv \widehat{\alpha}_n( \widehat{\beta}_n^2- \widehat{\alpha}_n \widehat{\gamma}_n)\zeta^{6d+3} +  \widehat{\beta}_n( \widehat{\beta}_n^2- \widehat{\alpha}_n \widehat{\gamma}_n)\zeta^{6d+4} +  \widehat{\gamma}_n( \widehat{\beta}_n^2- \widehat{\alpha}_n \widehat{\gamma}_n)\zeta^{6d+5}\mod \langle \zeta^{6d+6} \rangle.
\end{align*}
Recall that $d=\frac{p^n-1}{p-1}$  by which we obtain $dp+1 = \frac{p^{n+1}-1}{p-1}$ and thus we see that 
\[\widehat{\Delta}_3(\zeta) \equiv  \widehat{\alpha}_n( \widehat{\beta}_n^2- \widehat{\alpha}_n \widehat{\gamma}_n)\zeta^{2(3d+1)+1} +  \widehat{\beta}_n( \widehat{\beta}_n^2- \widehat{\alpha}_n \widehat{\gamma}_n)\zeta^{2(3d+1)+2} +  \widehat{\gamma}_n( \widehat{\beta}_n^2- \widehat{\alpha}_n \widehat{\gamma}_n)\zeta^{2(3d+1)+3} \mod \langle \zeta^{2(3d+1)+4} \rangle.\]
In view of (\ref{betaalphagamma}) we have \[ \widehat{\alpha}_n( \widehat{\beta}_n^2- \widehat{\alpha}_n \widehat{\gamma}_n) =  \widehat{\alpha}_n^3x_1^{-2}\widehat{\varphi} = x_1^{-2}\left(x_1^{3^n-2d}\widehat{\varphi}^d\right)^3\widehat{\varphi} = x_1^{3^{n+1}-2(3d+1)}\widehat{\varphi}^{3d+1} = \widehat{\alpha}_{n+1},\]  \[ \widehat{\beta}_n( \widehat{\beta}_n^2- \widehat{\alpha}_n \widehat{\gamma}_n) = \frac{x_2}{x_1}\widehat{\alpha}_{n+1} = \widehat{\beta}_{n+1},\] and \[ \widehat{\gamma}_n( \widehat{\beta}_n^2- \widehat{\alpha}_n \widehat{\gamma}_n) = \frac{x_3}{x_1}\widehat{\alpha}_{n+1}= \widehat{\gamma}_{n+1}.\] The proof is finished by specializing for each $i\in \{1,2,3\}$  $x_i$ to $a_{i+1}$ from which we obtain \[\widetilde{\Delta}_3(\zeta) = f^{3^{n+1}}(\zeta) -\zeta =  \alpha_{n+1}\zeta^{2(3d+1)+1} + \beta_{n+1}\zeta^{2(3d+1)+2} + \gamma_{n+1}\zeta^{2(3d+1)+3} \mod \langle \zeta^{2(3d+1)+4} \rangle.\]
This completes the induction and thus the proof of the Theorem for the case $p=3$.

\partn{Case 2, $p=5$} As for the case of $p=3$ we let $ \widehat{\delta}_n$ and $ \widehat{\varepsilon}_n$ denote the coefficients of the terms of order $2d+4$ and $2d+5$ in $\widehat{h}$ respectively. Recall that $d\equiv 1 \pmod{p}$. 
 We obtain
\begin{align*}
\widehat{\Delta}_2(\zeta) &\equiv \widehat{\Delta}_1(\widehat{h}(\zeta)) - \widehat{\Delta}_1(\zeta)\\
&\equiv \widehat{\alpha}_n[(\zeta+\widehat{\Delta}_1)^{2d+1}-\zeta^{2d+1}]+  \widehat{\beta}_n[(\zeta+\widehat{\Delta}_1)^{2d+2}-\zeta^{2d+2}]\\
&\hspace{5mm}+ \widehat{\gamma}_n[(\zeta+\widehat{\Delta}_1)^{2d+3}-\zeta^{2d+3}]+ \widehat{\delta}_n[(\zeta+\widehat{\Delta}_1)^{2d+4}-\zeta^{2d+4}]\\
&\hspace{5mm}+ \widehat{\varepsilon}_n[(\zeta+\widehat{\Delta}_1)^{2d+5}-\zeta^{2d+5}]\\
&\equiv3 \widehat{\alpha}_n^2\zeta^{4d+1} + 3 \widehat{\alpha}_n \widehat{\beta}_n\zeta^{4d+2} + 3 \widehat{\alpha}_n \widehat{\gamma}_n\zeta^{4d+3} + 3 \widehat{\alpha}_n \widehat{\delta}_n\zeta^{4d+4}+3 \widehat{\alpha}_n \widehat{\varepsilon}_n\zeta^{4d+5} \\
&\hspace{5mm}+ 4 \widehat{\alpha}_n \widehat{\beta}_n\zeta^{4d+2} + 4 \widehat{\beta}_n^2\zeta^{4d+3} + 4 \widehat{\beta}_n \widehat{\gamma}_n\zeta^{4d+4} + 4 \widehat{\beta}_n \widehat{\delta}_n\zeta^{4d+5}\\
&\hspace{5mm}+5 \widehat{\gamma}_n(\dots)+ 6 \widehat{\alpha}_n \widehat{\delta}_n\zeta^{4d+4} + 6 \widehat{\beta}_n \widehat{\delta}_n\zeta^{4d+5} + 7 \widehat{\alpha}_n \widehat{\varepsilon}_n\zeta^{4d+5}\\
&\equiv3 \widehat{\alpha}_n^2\zeta^{4d+1} + 2 \widehat{\alpha}_n \widehat{\beta}_n\zeta^{4d+2} + (4 \widehat{\beta}_n^2 + 3 \widehat{\alpha}_n \widehat{\gamma}_n)\zeta^{4d+3} \\
&\hspace{5mm}+4( \widehat{\alpha}_n \widehat{\delta}_n+ \widehat{\beta}_n \widehat{\gamma}_n)\zeta^{4d+4}  \mod \langle \zeta^{4d+6} \rangle.
\end{align*}
Continuing for $\widehat{\Delta}_3$ we have
\begin{align*}
\widehat{\Delta}_3(\zeta) &\equiv \widehat{\Delta}_2(\widehat{h}(\zeta)) - \widehat{\Delta}_2(\zeta)\\
&\equiv3 \widehat{\alpha}_n^2[(\zeta+\widehat{\Delta}_1)^{4d+1}-\zeta^{4d+1}]+ 2 \widehat{\alpha}_n \widehat{\beta}_n[(\zeta+\widehat{\Delta}_1)^{4d+2}-\zeta^{4d+2}]\\
&\hspace{5mm}+(4 \widehat{\beta}_n^2 + 3 \widehat{\alpha}_n \widehat{\gamma}_n)[(\zeta+\widehat{\Delta}_1)^{4d+3}-\zeta^{4d+3}]+( \widehat{\alpha}_n \widehat{\delta}_n+4 \widehat{\beta}_n \widehat{\gamma}_n)[(\zeta+\widehat{\Delta}_1)^{4d+4}-\zeta^{4d+4}]\\
&\equiv15 \widehat{\alpha}_n^2(\dots) + 12 \widehat{\alpha}_n^2 \widehat{\beta}_n\zeta^{6d+2} + 12 \widehat{\alpha}_n \widehat{\beta}_n^2\zeta^{6d+3} + 12 \widehat{\alpha}_n \widehat{\beta}_n \widehat{\gamma}_n\zeta^{6d+4} + 12 \widehat{\alpha}_n \widehat{\beta}_n \widehat{\delta}_n\zeta^{6d+5}\\
&\hspace{5mm}+7 \widehat{\alpha}_n(4 \widehat{\beta}_n^2 + 3 \widehat{\alpha}_n \widehat{\gamma}_n)\zeta^{6d+3} + 7 \widehat{\beta}_n(4 \widehat{\beta}_n^2 + 3 \widehat{\alpha}_n \widehat{\gamma}_n)\zeta^{6d+4} + 7 \widehat{\gamma}_n(4 \widehat{\beta}_n^2 + 3 \widehat{\alpha}_n \widehat{\gamma}_n)\zeta^{6d+5}\\
&\hspace{5mm}+32 \widehat{\alpha}_n( \widehat{\alpha}_n \widehat{\delta}_n+ \widehat{\beta}_n \widehat{\gamma}_n)\zeta^{6d+4} + 32 \widehat{\beta}_n( \widehat{\alpha}_n \widehat{\delta}_n+ \widehat{\beta}_n \widehat{\gamma}_n)\zeta^{6d+5} \\
&\equiv2 \widehat{\alpha}_n^2 \widehat{\beta}_n\zeta^{6d+2} +  \widehat{\alpha}_n^2 \widehat{\gamma}_n\zeta^{6d+3} + (3 \widehat{\beta}_n^3+2 \widehat{\alpha}_n^2 \widehat{\delta}_n)\zeta^{6d+4}\\
&\hspace{5mm} + ( \widehat{\alpha}_n \widehat{\gamma}_n^2+4 \widehat{\alpha}_n \widehat{\beta}_n \widehat{\delta}_n)\zeta^{6d+5} \mod \langle \zeta^{6d+6} \rangle.
 \end{align*}
For $\widehat{\Delta}_4$ we have
\begin{align*}
\widehat{\Delta}_4(\zeta) &\equiv \widehat{\Delta}_3(\widehat{h}(\zeta)) - \widehat{\Delta}_3(\zeta)\\
&\equiv2 \widehat{\alpha}_n^2 \widehat{\beta}_n[(\zeta+\widehat{\Delta}_1)^{6d+2}-\zeta^{6d+2}]+ \widehat{\alpha}_n^2 \widehat{\gamma}_n[(\zeta+\widehat{\Delta}_1)^{6d+3}-\zeta^{6d+3}]\\
&\hspace{5mm}+(3 \widehat{\beta}_n^3+2 \widehat{\alpha}_n^2 \widehat{\delta}_n)[(\zeta+\widehat{\Delta}_1)^{6d+4}-\zeta^{6d+4}]+( \widehat{\alpha}_n \widehat{\gamma}_n^2+4 \widehat{\alpha}_n \widehat{\beta}_n \widehat{\delta}_n)[(\zeta+\widehat{\Delta}_1)^{6d+5}-\zeta^{6d+5}]\\
&\equiv16 \widehat{\alpha}_n^3 \widehat{\beta}_n\zeta^{8d+2} + 16 \widehat{\alpha}_n^2 \widehat{\beta}_n^2\zeta^{8d+3} + 16 \widehat{\alpha}_n^2 \widehat{\beta}_n \widehat{\gamma}_n\zeta^{8d+4}+16 \widehat{\alpha}_n^2 \widehat{\beta}_n \widehat{\delta}_n\zeta^{8d+5}\\
&\hspace{5mm}+9 \widehat{\alpha}_n^3 \widehat{\gamma}_n\zeta^{8d+3} + 9 \widehat{\alpha}_n^2 \widehat{\beta}_n \widehat{\gamma}_n\zeta^{8d+4} + 9 \widehat{\alpha}_n^2 \widehat{\gamma}_n^2\zeta^{8d+5}\\
&\hspace{5mm} + 10(3 \widehat{\beta}_n^3+2 \widehat{\alpha}_n^2 \widehat{\delta}_n)(\dots) + 11 \widehat{\alpha}_n( \widehat{\alpha}_n \widehat{\gamma}_n^2+4 \widehat{\alpha}_n \widehat{\beta}_n \widehat{\delta}_n)\zeta^{8d+5}\\
&\equiv \widehat{\alpha}_n^3 \widehat{\beta}_n\zeta^{8d+2} +  \widehat{\alpha}_n^2( \widehat{\beta}_n^2+4 \widehat{\alpha}_n \widehat{\gamma}_n)\zeta^{8d+3} \mod \langle \zeta^{8d+6} \rangle.
 \end{align*}
Finally for $\widehat{\Delta}_5$ we have
\begin{align*}
\widehat{\Delta}_5(\zeta) &\equiv \widehat{\Delta}_4(\widehat{h}(\zeta)) - \widehat{\Delta}_4(\zeta)\\
&\equiv \widehat{\alpha}_n^3 \widehat{\beta}_n[(\zeta+\widehat{\Delta}_1)^{8d+2}-\zeta^{8d+2}]+ \widehat{\alpha}_n^2( \widehat{\beta}_n^2+4 \widehat{\alpha}_n \widehat{\gamma}_n)[(\zeta+\widehat{\Delta}_1)^{8d+3}-\zeta^{8d+3}]\\
&\equiv10 \widehat{\alpha}_n^3 \widehat{\beta}_n(\dots) + 11 \widehat{\alpha}_n^3( \widehat{\beta}_n^2+4 \widehat{\alpha}_n \widehat{\gamma}_n)\zeta^{10d+3} + 11 \widehat{\alpha}_n^2 \widehat{\beta}_n( \widehat{\beta}_n^2+4 \widehat{\alpha}_n \widehat{\gamma}_n)\zeta^{10d+4}\\
&\hspace{5mm} + 11 \widehat{\alpha}_n^2 \widehat{\gamma}_n( \widehat{\beta}_n^2+4 \widehat{\alpha}_n \widehat{\gamma}_n)\zeta^{10d+5}\\
&\equiv \widehat{\alpha}_n^3( \widehat{\beta}_n^2- \widehat{\alpha}_n \widehat{\gamma}_n)\zeta^{10d+3} +  \widehat{\alpha}_n^2 \widehat{\beta}_n( \widehat{\beta}_n^2- \widehat{\alpha}_n \widehat{\gamma}_n)\zeta^{10d+4}\\
&\hspace{5mm} +  \widehat{\alpha}_n^2 \widehat{\gamma}_n( \widehat{\beta}_n^2- \widehat{\alpha}_n \widehat{\gamma}_n)\zeta^{10d+5}\mod \langle \zeta^{10d+6} \rangle.
 \end{align*}
 In view of (\ref{betaalphagamma}) we have \[ \widehat{\alpha}_n^3( \widehat{\beta}_n^2- \widehat{\alpha}_n \widehat{\gamma}_n) =  \widehat{\alpha}_n^5x_1^{-2}\widehat{\varphi} = x_1^{-2}\left(x_1^{5^n-2d}\widehat{\varphi}^d\right)^5\widehat{\varphi} = x_1^{5^{n+1}-2(5d+1)}\widehat{\varphi}^{5d+1} = \widehat{\alpha}_{n+1},\]
\[ \widehat{\alpha}_n^2 \widehat{\beta}_n( \widehat{\beta}_n^2- \widehat{\alpha}_n \widehat{\gamma}_n) = \frac{x_2}{x_1}\widehat{\alpha}_{n+1} =\widehat{\beta}_{n+1},\] and finally \[ \widehat{\alpha}_n^2 \widehat{\gamma}_n( \widehat{\beta}_n^2- \widehat{\alpha}_n \widehat{\gamma}_n) = -\left(\frac{3x_1}{2}-\frac{x_3}{x_1}\right)\widehat{\alpha}_{n+1} = \widehat{\gamma}_{n+1}.\] The proof is completed by specializing for each $i \in [1,3]$ $x_i$ to $a_{i+1}$, which yields \[\widetilde{\Delta}_5(\zeta) = f^{5^{n+1}}(\zeta) -\zeta =  \alpha_{n+1}\zeta^{2(5d+1)+1} + \beta_{n+1}\zeta^{2(5d+1)+2} + \gamma_{n+1}\zeta^{2(5d+1)+3} \mod \langle \zeta^{2(5d+1)+4} \rangle.\]

\partn{Case 3, $p\geq7$}
We recall that \[\widehat{\varphi} = \frac{3}{2}x_1^3+x_2^2-x_1x_3,\quad \widehat{\alpha}_n=\widehat{\alpha}_n(x) :=x_1^{p^n-2d}\widehat{\varphi}^d,\] and \[\quad \widehat{\beta}_n= \widehat{\beta}_n(x) := \frac{x_2}{x_1}\widehat{\alpha}_n(x), \quad \widehat{\gamma}_n =\widehat{\gamma}_n(x) := -\left(\frac{3x_1}{2}-{x_3}{x_1}\right)\widehat{\alpha}_n.\]
Also let $ \widehat{\delta}_n,  \widehat{\varepsilon}_n$ be the coefficients of the terms of degree $2d+4$ and $2d+5$ in $\widehat{h}$ respectively, and $x = ( x_1,x_2,\dots)$. We will prove that for a given $m \geq 1$ we have 
\begin{equation}\label{deltamlast}\widehat{\Delta}_m(\zeta) \equiv \widehat{\mathcal{A}}_m\zeta^{2dm + 1} + \widehat{\mathcal{B}}_m\zeta^{2dm + 2} + \widehat{\mathcal{C}}_m\zeta^{2dm + 3} + \widehat{\mathcal{D}}_m\zeta^{2dm+4} + \widehat{\mathcal{E}}_m\zeta^{2dm+5} \mod \langle \zeta^{2dm+6}\rangle.\end{equation} where $\widehat{\mathcal{A}}=\widehat{\mathcal{A}}(x), \widehat{\mathcal{B}}=\widehat{\mathcal{B}}(x), \widehat{\mathcal{C}}=\widehat{\mathcal{C}}(x), \widehat{\mathcal{D}}=\widehat{\mathcal{D}}(x)$ and $\widehat{\mathcal{E}}=\widehat{\mathcal{E}}(x)$ are solutions of 
\begin{equation}\label{matrix}
\begin{bmatrix}
   \widehat{\alpha}_n(2m + 1) & 0 & 0 & 0 & 0\\
   \widehat{\beta}_n(2m + 1) & \  \widehat{\alpha}_n(2m+2) & 0 &0 & 0\\
   \widehat{\gamma}_n(2m + 1) &  \widehat{\beta}_n(2m+2) &  \widehat{\alpha}_n(2m + 3) &0 & 0\\
     \widehat{\delta}_n(2m + 1) &  \widehat{\gamma}_n(2m+2) &  \widehat{\beta}_n(2m + 3) &  \widehat{\alpha}_n(2m+4) & 0\\
         \widehat{\varepsilon}_n(2m + 1) &  \widehat{\delta}_n(2m+2) &  \widehat{\gamma}_n(2m + 3) &  \widehat{\beta}_n(2m+4) &  \widehat{\alpha}_n(2m+5)\\
 \end{bmatrix}
 \begin{bmatrix}
  \widehat{\mathcal{A}}_{m} \\
 \widehat{\mathcal{B}}_{m} \\
 \widehat{\mathcal{C}}_{m} \\
  \widehat{\mathcal{D}}_{m} \\
   \widehat{\mathcal{E}}_{m} 
 \end{bmatrix}
 =
 \begin{bmatrix}
 \widehat{\mathcal{A}}_{m+1} \\
 \widehat{\mathcal{B}}_{m+1} \\
 \widehat{\mathcal{C}}_{m+1} \\
  \widehat{\mathcal{D}}_{m+1} \\
   \widehat{\mathcal{E}}_{m+1} \
 \end{bmatrix},
 \end{equation}
with initial conditions $(\widehat{\mathcal{A}}_{1} , \widehat{\mathcal{B}}_{1} , \widehat{\mathcal{C}}_{1}, \widehat{\mathcal{D}}_1, \widehat{\mathcal{E}}_1) = ( \widehat{\alpha}_n,  \widehat{\beta}_n,  \widehat{\gamma}_n,  \widehat{\delta}_n,  \widehat{\varepsilon}_n)$. We will proceed by induction in $m$. For $m=1$ (\ref{deltamlast}) holds by definition. Let $m\geq1$ be such that (\ref{deltamlast}) holds. Again recall that $d\equiv1 \pmod{p}$. Using that for $p\geq7$ we have $2d \geq 2(1+7) = 16$ for all $n\geq1$ we obtain
\begin{align*}
\widehat{\Delta}_{m+1}(\zeta) &\equiv \widehat{\Delta}_{m}(\widehat{h}(\zeta)) - \widehat{\Delta}_m(\zeta)\\
&\equiv\widehat{\mathcal{A}}_m[(\zeta + \widehat{\Delta}_1)^{2dm+1}-\zeta^{2dm+1}] + \widehat{\mathcal{B}}_m[(\zeta + \widehat{\Delta}_1)^{2dm+2}-\zeta^{2dm+2}]\\
&\hspace{5mm}+\widehat{\mathcal{C}}_m[(\zeta + \widehat{\Delta}_1)^{2dm+3}-\zeta^{2dm+3}] + \widehat{\mathcal{D}}_m[(\zeta + \widehat{\Delta}_1)^{2dm+4}-\zeta^{2dm+4}]\\
&\hspace{5mm}+\widehat{\mathcal{E}}_m[(\zeta + \widehat{\Delta}_1)^{2dm+5}-\zeta^{2dm+5}]\\
&\equiv\widehat{\mathcal{A}}_m\bigg[ \widehat{\alpha}_n(2m+1)\zeta^{2dm+3} +  \widehat{\beta}_n(2m+1)\zeta^{2dm+4} +  \widehat{\gamma}_n(2m+1)\zeta^{2dm+5} \\
&\hspace{5mm}+  \widehat{\delta}_n(2m+1)\zeta^{2dm+6} +  \widehat{\varepsilon}_n(2m+1)\zeta^{2dm+7}\bigg]\\
&\hspace{5mm}+\widehat{\mathcal{B}}_m\bigg[ \widehat{\alpha}_n(2m+2)\zeta^{2dm+4} +  \widehat{\beta}_n(2m+2)\zeta^{2dm+5} \\
&\hspace{5mm} +  \widehat{\gamma}_n(2m+2)\zeta^{2dm+6}+ \widehat{\delta}_n(2m+2)\zeta^{2dm+7}\bigg]\\
&\hspace{5mm}+\widehat{\mathcal{C}}_m\bigg[ \widehat{\alpha}_n(2m+3)\zeta^{2dm+5} +  \widehat{\beta}_n(2m+3)\zeta^{2dm+6} +  \widehat{\gamma}_n(2m+3)\zeta^{2dm+7}\bigg]\\
&\hspace{5mm}+\widehat{\mathcal{D}}_m\bigg[ \widehat{\alpha}_n(2m+4)\zeta^{2dm+6} +  \widehat{\beta}_n(2m+4)\zeta^{2dm+7}\bigg]+\widehat{\mathcal{E}}_m\bigg[ \widehat{\alpha}_n(2m+5)\zeta^{2dm+7}\bigg]\\
&\equiv \widehat{\mathcal{A}}_{m+1}\zeta^{2dm+3}+\widehat{\mathcal{B}}_{m+1}\zeta^{2dm+4}+\widehat{\mathcal{C}}_{m+1}\zeta^{2dm+5}\\
&\hspace{5mm}+\widehat{\mathcal{D}}_{m+1}\zeta^{2dm+6}+\widehat{\mathcal{E}}_{m+1}\zeta^{2dm+7} \mod \langle \zeta^{2dm+8} \rangle.
\end{align*}
This completes the proof of the induction step and proves (\ref{deltamlast}). 


The first three equations in (\ref{matrix}) were solved in \cite[page 267-268ff]{Fransson2017}. The solutions are 
\begin{equation}\label{AAA}\widehat{\mathcal{A}}_m =  \widehat{\alpha}_n^m(2m-1)!!,
\end{equation} 
\begin{equation}\label{BBB}\widehat{\mathcal{B}}_m =  \widehat{\alpha}_n^{m-1} \widehat{\beta}_n\mathcal{R}_m,
\end{equation} 
\begin{equation}\label{chatten}\widehat{\mathcal{C}}_m =  \widehat{\alpha}_n^{m-2} \widehat{\beta}_n^2\left(\mathcal{S}_m(2,0)-\mathcal{T}_m\right) +  \widehat{\alpha}_n^{m-1} \widehat{\gamma}_n\mathcal{S}_m(0,1).
\end{equation}
By substituting in $\widehat{\mathcal{D}}_m$ in(\ref{matrix}) for the above equations we obtain \begin{align*}\widehat{\mathcal{D}}_{m+1} &=  \widehat{\alpha}_n(2m+4)\widehat{\mathcal{D}}_m +  \widehat{\alpha}_n^m \widehat{\delta}_n(2m+1)!! +  \widehat{\alpha}_n^{m-1} \widehat{\beta}_n \widehat{\gamma}_n(2m+2)\mathcal{R}_m \\ &\hspace{5mm}+  \widehat{\alpha}_n^{m-2} \widehat{\beta}_n(2m+3)( \widehat{\beta}_n^2\left(\mathcal{S}_m(2,0)-\mathcal{T}_m\right) +  \widehat{\alpha}_n \widehat{\gamma}_n\mathcal{S}_m(0,1)),\end{align*}
which, by Lemma \ref{dmcoeff} and Lemma \ref{thmDIFF}  has the explicit solution \begin{equation}\label{dhatten}\widehat{\mathcal{D}}_m =  \widehat{\alpha}_n^{m-3}( \widehat{\alpha}_n^2 \widehat{\delta}_n\mathcal{X}_m(0,1) +  \widehat{\alpha}_n \widehat{\beta}_n \widehat{\gamma}_n(\mathcal{V}_m(0,2,2) + \mathcal{U}_m(0,1)) +  \widehat{\beta}_n^3(\mathcal{U}_m(2,0)-\mathcal{W}_m)).\end{equation}
Insertion of all the above equations into the fifth equation in (\ref{matrix}) yields
\begin{align*} \widehat{\mathcal{E}}_{m+1} &=  \widehat{\alpha}_n(2m+5)\widehat{\mathcal{E}}_m +  \widehat{\alpha}_n^{m-3}\bigg( \widehat{\alpha}_n^3 \widehat{\varepsilon}_n(2m+1)!! +  \widehat{\alpha}_n^2 \widehat{\beta}_n \widehat{\delta}_n(2m+2)\mathcal{R}_m\\ 
&\hspace{5mm}+ \widehat{\alpha}_n \widehat{\beta}_n^2 \widehat{\gamma}_n(2m+3)(\mathcal{S}_m(2,0)-\mathcal{T}_m) +  \widehat{\alpha}_n^2 \widehat{\gamma}_n^2(2m+3)\mathcal{S}_m(0,1)\\
&\hspace{5mm}+ \widehat{\beta}_n(2m+4)\left( \widehat{\alpha}_n^2 \widehat{\delta}_n\mathcal{X}_m(0,1) +  \widehat{\alpha}_n \widehat{\beta}_n \widehat{\gamma}_n(\mathcal{V}_m(0,2,2) + \mathcal{U}_m(0,1)) +  \widehat{\beta}_n^3(\mathcal{U}_m(2,0)-\mathcal{W}_m)\right)\bigg),
\end{align*}
which, by Lemma \ref{Ssum}, Lemma \ref{uvxw} and  Lemma \ref{thmDIFF} has the explicit solution
\begin{align}\label{ehatten}
\widehat{\mathcal{E}}_m &=  \widehat{\alpha}_n^{m-4}\bigg( \widehat{\alpha}_n^3 \widehat{\varepsilon}_n\mathcal{Z}_m(0,0,1)+ \widehat{\alpha}_n^2 \widehat{\beta}_n \widehat{\delta}_n(\widehat{\mathcal{R}}_m(0)+\widehat{\mathcal{X}}_m(0,1)) +  \widehat{\beta}_n^4(\widehat{\mathcal{U}}_m(2,0)-\widehat{\mathcal{W}}_m)\\\notag
&+  \widehat{\alpha}_n \widehat{\beta}_n^2 \widehat{\gamma}_n(\widehat{\mathcal{S}}_m(2,0,0)-\widehat{\mathcal{T}}_m(0)+\widehat{\mathcal{V}}_m(0,2,2)+\widehat{\mathcal{U}}_m(0,1))+ \widehat{\alpha}_n^2 \widehat{\gamma}_n^2\widehat{\mathcal{S}}_m(0,1,0)\bigg).
\end{align}
It follows from (\ref{AAA}) and (\ref{BBB}) and Lemma \ref{sumident} that $\widetilde{\widehat{\mathcal{A}}}_p = \widetilde{\widehat{\mathcal{B}}}_p = 0$. We also recall that \[ \widehat{\alpha}_n = x_1^{p^n-2d}\widehat{\varphi},\quad  \widehat{\beta}_n = \frac{x_2}{x_1} \widehat{\alpha}_n,\quad  \widehat{\gamma}_n = -\left(\frac{3x_1}{2}-\frac{x_3}{x_1}\right) \widehat{\alpha}_n.\] 
Concerning (\ref{chatten}) together with (\ref{betaalphagamma}) and 
 by letting $m=p$ we obtain \begin{align}\label{ccc}
\widetilde{\widehat{\mathcal{C}}}_p &= \widehat{\alpha}_n^{p-2} \widehat{\beta}_n^2\left(\widetilde{\mathcal{S}}_p(2,0)-\widetilde{\mathcal{T}}_p\right) +  \widehat{\alpha}_n^{p-1} \widehat{\gamma}_n\widetilde{\mathcal{S}}_p(0,1)\\
&=  \widehat{\alpha}_n^{p-2}( \widehat{\beta}_n^2- \widehat{\alpha}_n \widehat{\gamma}_n)\notag\\
&= \widehat{\alpha}_n^p x_1^{-2}\widehat{\varphi}=(x_1^{p^n-2d}\widehat{\varphi}^d)^px_1^{-2}\widehat{\varphi}\notag\\
&=x_1^{p^{n+1}-2(dp+1)}\widehat{\varphi}^{dp+1}=\widehat{\alpha}_{n+1}.\notag
\end{align}
Similarly for (\ref{dhatten}) we obtain \begin{align*} \widetilde{\widehat{\mathcal{D}}}_p &=  \widehat{\alpha}_n^{p-3}( \widehat{\alpha}_n^2 \widehat{\delta}_n\widetilde{\mathcal{X}}_p(0,1) +  \widehat{\alpha}_n \widehat{\beta}_n \widehat{\gamma}_n(\widetilde{\mathcal{V}}_p(0,2,2) + \widetilde{\mathcal{U}}_p(0,1)) +  \widehat{\beta}_n^2(\widetilde{\mathcal{U}}_p(2,0)-\widetilde{\mathcal{W}}_p))\\
&= \widehat{\alpha}_n^{p-3}( \widehat{\beta}_n^3 -  \widehat{\alpha}_n \widehat{\beta}_n \widehat{\gamma}_n).
\end{align*}
By using $ \widehat{\alpha}_n^{p-2}( \widehat{\beta}_n^2- \widehat{\alpha}_n \widehat{\gamma}_n) = \widehat{\alpha}_{n+1}$ from (\ref{ccc}) we obtain \[\widetilde{\widehat{\mathcal{D}}}_p = \frac{x_2}{x_1}\widehat{\alpha}_{n+1}=\widehat{\beta}_{n+1}.\]
Finally for (\ref{ehatten}) again by using $ \widehat{\alpha}_n^{p-2}( \widehat{\beta}_n^2- \widehat{\alpha}_n \widehat{\gamma}_n) = \widehat{\alpha}_{n+1}$ together with Lemma \ref{Ssum} and \ref{uvxw} 
\begin{align*}
\widetilde{\widehat{\mathcal{E}}}_p &=  \widehat{\alpha}_n^{p-4}\bigg( \widehat{\alpha}_n^3 \widehat{\varepsilon}_n\widetilde{\mathcal{Z}}_p(0,0,1)+ \widehat{\alpha}_n^2 \widehat{\beta}_n \widehat{\delta}_n(\widetilde{\widehat{\mathcal{R}}}_p(0)+\widetilde{\widehat{\mathcal{X}}}_p(0,1)) +  \widehat{\beta}_n^4(\widetilde{\widehat{\mathcal{U}}}_p(2,0)-\widetilde{\widehat{\mathcal{W}}}_p)\\
&\hspace{5mm}+  \widehat{\alpha}_n \widehat{\beta}_n^2 \widehat{\gamma}_n(\widetilde{\widehat{\mathcal{S}}}_p(2,0,0)-\widetilde{\widehat{\mathcal{T}}}_p(0)+\widetilde{\widehat{\mathcal{V}}}_p(0,2,2)+\widetilde{\widehat{\mathcal{U}}}_p(0,1))+ \widehat{\alpha}_n^2 \widehat{\gamma}_n^2\widetilde{\widehat{\mathcal{S}}}_p(0,1,0)\bigg)\\
&= \widehat{\alpha}_n^{p-4}( \widehat{\alpha}_n \widehat{\beta}_n^2 \widehat{\gamma}_n -  \widehat{\alpha}_n^2 \widehat{\gamma}_n^2)\\
&=-\left(\frac{3x_1}{2}-\frac{x_3}{x_1}\right)\widehat{\alpha}_{n+1} = \widehat{\gamma}_{n+1}.
\end{align*}
Thus, the proof is finished by specializing for each $i \in [1,3]$ $x_i$ to $a_{i+1}$ which yields 
\begin{align*}\widetilde{\Delta}_p(\zeta) 
\equiv 
\alpha_{n+1}\zeta^{2(dp+1)+1} + \beta_{n+1}\zeta^{2(dp+1)+2} + \gamma_{n+1}\zeta^{2(dp+1)+3} \mod \langle \zeta^{2(dp+1)+4}\rangle.
\end{align*}
This completes the proof of Theorem \ref{pncoeff}.
\end{proof}

\appendix
\section{Details for Remark \ref{appremark}}\label{apprem}
Let $p\geq5$ and let $h \in k[[\zeta]]$ of the form $h(\zeta) \equiv \zeta(1+ x_3\zeta^3) \mod \langle \zeta^{10} \rangle$, then by putting $q:=3$, $a_1:=\frac{x_3}{3}$ and $a_2:=-\frac{4}{9}a_3^2$ in (3.2) in \cite[Main Lemma]{LindahlRiveraLetelier2015} we have $i_n(h) = 3(1+p+\dots+p^n)$.

We recall that $0<|a_2|<1$ and $|a_3|=1$, thus we have $\widetilde{f}(\zeta) = \zeta(1 + a_3\zeta^3) \mod \langle \zeta^{10} \rangle$, and  by the previous argument we have $\widetilde{f}$ is 3-ramified as required.

\section{Details for Example \ref{appexample}}\label{appexp}
Let $q_1(\zeta) = \zeta + (1+t)\zeta^3  +\zeta^4$, and $q_2(\zeta) = \zeta + (2+t)\zeta^3 + 4\zeta^4 +4\zeta^5$. In both cases $q_1$ and $q_2$ we have $\lambda \neq 0$. Thus, both $q_1$ and $q_2$ are 2-ramified. 
However, for the reduction we first note that $i_0(\widetilde{q_1}) = i_0(\widetilde{q_2}) = 2$. 
It follows that neither $\widetilde{q_1}$ nor $\widetilde{q_2}$ is 3-ramified. However, note that 
\[\frac{3}{2}1^3 + 1^2 \equiv 0 \pmod{5}, \quad \text{and}\quad \frac{3}{2}2^3+4^2-2\cdot4 \equiv 0 \pmod{5},\] 
so neither $\widetilde{q_1}$ nor $\widetilde{q_2}$ is 2-ramified. 
In fact $i_1(\widetilde{q}_1) = 17$ and $i_1(\widetilde{q}_2) = 27$. 
By \cite[Corollary 1]{LaubieSaine1998} 
\[
i_n(\widetilde{q_1}) = 2 + 3p +\dots+3p^n,
\quad \text{and} \quad i_n(\widetilde{q_2}) = 2 + 5p +\dots+5p^n.
\] 
Thus, $i_n(\widetilde{q_1}) - i_{n-1}(\widetilde{q_1}) = 3p^n$, and by Lemma \ref{lem24} the norm of the periodic points in $\F_5((t))$ of minimal period $p^n$, with $n\geq 1$, are in the case of $q_1$ equal to $|\lambda|^{\frac{1}{p}}$. For $q_2$ we have $i_n(\widetilde{q_2}) - i_{n-1}(\widetilde{q_2}) = 5p^n$. The periodic points of $q_2$, that are not fixed points, are thus in $\{\zeta \in \F_5((t)) : |\zeta| > |\lambda|^{\frac{1}{p}}\}$.

\bibliographystyle{alpha} 

\end{document}